\let\mathbb\mathds
\DeclareMathAlphabet\oldmathcal{OMS}        {cmsy}{b}{n}
\SetMathAlphabet    \oldmathcal{normal}{OMS}{cmsy}{m}{n}
\DeclareMathAlphabet\oldmathbcal{OMS}       {cmsy}{b}{n}
\newtheorem{theorem}{Theorem}[section]
\newtheorem{lemma}[theorem]{Lemma}
\newtheorem{proposition}[theorem]{Proposition}
\newtheorem{corollary}[theorem]{Corollary}
\newenvironment{remark}{\medskip \refstepcounter{theorem}
\noindent  {\bf Remark \thetheorem}.\rm}{\,}
\newenvironment{remarks}{\medskip \refstepcounter{theorem}
\noindent  {\bf Remarks \thetheorem}.\rm}{\,}
\newcommand     {\comment}[1]   {}
\newcommand{\mute}[2] {}
\newcommand     {\printname}[1] {}
\newtheorem*{ack}{Acknowledgements}
\def\d{\partial}
\def\<{\langle}
\def\>{\rangle}
\def\BOne{{\mathchoice {\rm 1\mskip-4mu l} {\rm 1\mskip-4mu l}
                          {\rm 1\mskip-4.5mu l} {\rm 1\mskip-5mu l}}}
\def\fract#1#2{\raise4pt\hbox{$ #1 \atop #2 $}}
\def\decdnar#1{\phantom{\hbox{$\scriptstyle{#1}$}}
\left\downarrow\vbox{\vskip15pt\hbox{$\scriptstyle{#1}$}}\right.}
\def\bbc{{\mathbb C}}
\def\bbp{{\mathbb P}}
\def\bbq{{\mathbb Q}}
\def\bbr{{\mathbb R}}
\def\bbz{{\mathbb Z}}
\def\gra{\alpha}
\def\grb{\beta}
\def\grd{\delta}
\def\grg{\gamma}
\def\grk{\kappa}
\def\grl{\lambda}
\def\gro{\omega}
\def\grr{\rho}
\def\grt{\tau}
\def\grz{\zeta}
\def\grD{\Delta}
\def\grG{\Gamma}
\def\cala{{\mathcal A}}
\def\calo{{\mathcal O}}
\def\cald{{\mathcal D}}
\def\calf{{\mathcal F}}
\def\calh{{\mathcal H}}
\def\cali{{\mathcal I}}
\def\calm{{\mathcal M}}
\def\caln{{\mathcal N}}
\def\calo{{\mathcal O}}
\def\cals{{\oldmathcal S}}
\def\calw{{\mathcal W}}
\def\la#1{\hbox to #1pc{\leftarrowfill}}
\def\ra#1{\hbox to #1pc{\rightarrowfill}}
\def\calz{{\oldmathcal Z}}
\def\ga{{\mathfrak a}}
\def\gc{{\mathfrak c}}
\def\gf{{\mathfrak f}}
\def\gh{{\mathfrak h}}
\def\gi{{\mathfrak i}}
\def\gm{{\mathfrak m}}
\def\gn{{\mathfrak n}}
\def\go{{\mathfrak o}}
\def\gr{{\mathfrak r}}
\def\gs{{\mathfrak s}}
\def\gt{{\mathfrak t}}
\def\gu{{\mathfrak u}}
\def\gy{{\mathfrak y}}
\def\gz{{\mathfrak z}}
\def\gA{{\mathfrak A}}
\def\gB{{\mathfrak B}}
\def\gC{{\mathfrak C}}
\def\gD{{\mathfrak D}}
\def\gH{{\mathfrak H}}
\def\gR{{\mathfrak R}}
\def\gS{{\mathfrak S}}
\def\gT{{\mathfrak T}}
\def\gX{{\mathfrak X}}
\def\hook{\mathbin{\hbox to 6pt{%
                 \vrule height0.4pt width5pt depth0pt
                 \kern-.4pt
                 \vrule height6pt width0.4pt depth0pt\hss}}}
\def\d{\partial}
\def\cX{\hat{X}}
\def\cJ{\hat{J}}
\def\cK{\hat{K}}
\def\12{\xi_{k_1,k_2}}
\def\m5{M^5_{k_1,k_2}}
\begin{document}

\title{Extremal Sasakian Geometry on $T^2\times S^3$ and Related Manifolds}

\author{Charles P. Boyer and Christina W. T{\o}nnesen-Friedman}\thanks{Christina T{\o}nnesen-Friedman was partially supported by a grant from the Simons Foundation (\#208799)}
\address{Charles P. Boyer, Department of Mathematics and Statistics,
University of New Mexico, Albuquerque, NM 87131.}
\email{cboyer@math.unm.edu} 
\address{Christina W. T{\o}nnesen-Friedman, Department of Mathematics, Union
College, Schenectady, New York 12308, USA } \email{tonnesec@union.edu}

\keywords{Extremal Sasakian metrics, extremal K\"ahler metrics, Sasaki cone and Sasaki bouquet, join construction}

\subjclass[2000]{Primary: 53D42; Secondary:  53C25}

\begin{abstract}
We prove the existence of extremal Sasakian structures occurring on a countably infinite number of distinct contact structures on $T^2\times S^3$ and certain related 5-manifolds. These structures occur in bouquets and exhaust the Sasaki cones in all except one case in which there are no extremal metrics.

\end{abstract}

\maketitle

\markboth{Extremal Sasakian Geometry on $T^2\times S^3$}{Charles P. Boyer and Christina W. T{\o}nnesen-Friedman}

\section{Introduction}
Little appears to be known about the existence of Sasakian structures on contact manifolds with non-trivial fundamental group outside those obtained by quotienting a simply connected Sasakian manifold by a finite group of Sasakian automorphisms acting freely. Although it has been known for sometime that $T^2\times S^3$ admits a contact structure \cite{Lut79}, it is unknown until now whether it admits a Sasakian structure. In the current paper we not only prove the existence of Sasakian structures on  $T^2\times S^3$, but also prove the existence of families, known as bouquets, of extremal Sasakian metrics on $T^2\times S^3$  as well as on certain related 5-manifolds.

We mention here that there is a toric contact structure on $T^2\times S^3$ \cite{Lut79,Ler02a}; however, its moment cone contains a two dimensional linear subspace. Thus, it follows from Proposition 8.4.38 of \cite{BG05} that this toric contact structure is not of Reeb type, and so cannot admit a compatible $T^3$-invariant Sasakian metric. Furthermore, $T^2\times S^3$ cannot admit {\it any} toric contact structure of Reeb type for the latter must have finite fundamental group \cite{Ler04}. Nevertheless, as we shall show, $T^2\times S^3$ does admit a countably infinite number of inequivalent contact structures $\cald_k,~k\in\bbz^+$ with a compatible $T^2$ action of Reeb type which fibers over the symplectic manifold $T^2\times S^2$. It is easy to see from Lutz that the toric contact structure on $T^2\times S^3$ has vanishing first Chern class; whereas, as we show below our $T^2$ invariant Sasakian structures do not. Hence, our contact structures are distinct from that of the toric case.

The organization of our paper proceeds as follows: in Section 2 we give the preliminaries of Sasakian geometry with emphasis on the Sasaki cone and Sasaki bouquet. In Section 3  we apply the join operation \cite{BGO06,BG05} to the three dimension nilmanifold $\caln^3$ and the three dimension sphere $S^3$ to determine the diffeomorphism type of our 5-manifolds\footnote{It is reasonable to expect a similar description of Sasakian geometry on the non-trivial $S^3$ bundle over $T^2$, but we have not done so here. It will, however, be treated in a forthcoming work \cite{BoTo12}}. The key here is a recent topological rigidity result of Kreck and L\"uck \cite{KrLu09}.  In Section 4 we turn to a brief review of the complex structures on ruled surfaces of genus one described by Suwa \cite{Suw69}, and in Section 5 we give a review of extremal K\"ahler structures on these surfaces based mainly on \cite{Fuj92,ACGT08}. In Section 6 we investigate Hamiltonian circle actions on $T^2\times S^2$. The important point is to describe Hamiltonian circle actions which represent distinct conjugacy classes of maximal tori. We are able to do this by computing rational homotopy groups using the recent work of Bu{\c{s}}e \cite{Bus10}. In Section 7 we describe the relevant Sasakian structures on $T^2\times S^3$ and certain related manifolds, and finally in Section 8 we prove our main results concerning the extremal Sasakian structures on these 5-manifolds by showing that in all but one case they exhaust the Sasaki cones. It is important to realize that deforming the Sasakian structure to obtain an extremal Sasaki metric also deforms the contact structure $\cald$. Thus, we are actually dealing with isotopy classes of contact structures that are obtained by isotopies that are invariant under the normalizer of the maximal torus in the CR automorphism group.  We denote such an isotopy class by $\bar{\cald}$, and prove that the Sasaki cone only depends on this isotopy class. 

Let $\calm$ denote the moduli space of complex structures on the torus $T^2$. We mention that for notational convenience we shall often suppress the dependence of objects on $\grt\in \calm$. Choosing a different complex structure $\grt'\in \calm$ has no effect whatsoever on the Sasaki cone, so this is why we suppress the notation. Often there are families of complex structures associated with each Sasaki cone. Let us now sketch the construction leading to our main theorem. As mentioned above our 5-manifolds $M^5_{k_1,k_2}$ are realized as circle bundles over $T^2\times S^2$. The underlying CR structures on $M^5_{k_1,k_2}$ are inherited from the complex structures of the base $T^2\times S^2$. These complex structures $J_{2m}$ arise by realizing $T^2\times S^2$ as projectivizations of rank two complex vector bundles over $T^2$ together with a choice of complex structure on $T^2$. These ruled surfaces are well understood from the work of Atiyah \cite{Ati55,Ati57} and Suwa \cite{Suw69}. When the chosen ruled surface admits a Hamiltonian Killing vector field, this vector field lifts to an infinitesimal automorphism of the induced Sasakian structure giving rise to a two dimensional Sasaki cone as described in \cite{BGS06}. Furthermore, as described in \cite{Boy10a,Boy10b} the Sasaki cones often come in bouquets associated to an isotopy class of contact structures which as mentioned above correspond to distinct conjugacy classes of tori in the contactomorphism group. Then by adapting the results of \cite{ACGT08} to the orbifold case and using the Openness Theorem of \cite{BGS06} we prove that in all but one case extremal metrics exhaust the Sasaki cone. It can also be mentioned that the construction of the extremal Sasakian metrics, given in Section \ref{extsassec}, is actually explicit. In this case the transversal K\"ahler structure admits a Hamiltonian $2$-form (see e.g. \cite{ACGT04} for a definition) or is a (local) product of a flat metric on $T^2$ and a metric with a Hamiltonian $2$-form.

\begin{theorem}\label{mainthm}
The contact manifolds $M^5_{k_1,k_2}=\caln^3\star_{k_1,k_2}S^3$ admit a bouquet of Sasakian structures for each $k_1\in \bbz^+$ and each positive integer $k_2$ relatively prime to $k_1$. These Sasaki bouquets consist of $\lceil\frac{k_1}{k_2}\rceil$ Sasaki cones $\grk(\bar{\cald}_{k_1},J_{2m})$ of dimension two with complex structures $J_{2m}$ labelled by $m=0,\cdots,\lceil\frac{k_1}{k_2}\rceil-1$,  plus a Sasaki cone $\grk(\bar{\cald}_{k_1},J)$ of dimension one where $J\in A_{0,\grt}$, the non-split complex structure. 
\begin{itemize} 
\item For each $m=0,\cdots,\lceil\frac{k_1}{k_2}\rceil-1$ extremal Sasakian structures exhaust the Sasaki cones $\grk(\bar{\cald}_{k_1},J_{2m})$.  Moreover, for $m=0$ there is a unique regular ray of extremal Sasakian structures with constant scalar curvature. 
\item For $J\in A_{0,\grt}$ the one dimensional Sasaki cone $\grk(\bar{\cald}_{k_1},J)$ admits no extremal Sasaki metric. 
\end{itemize}
Furthermore, $M^5_{k_1,1}$ is diffeomorphic to $T^2\times S^3$ for all $k_1\in\bbz^+$ and has a countably infinite number of distinct isotopy classes of contact structures $\bar{\cald}_{k_1}$ of Sasaki type.
\end{theorem}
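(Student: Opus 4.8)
The plan is to assemble Theorem \ref{mainthm} from the structural pieces developed across Sections 3--8, treating it as a synthesis rather than a single argument. First I would fix $k_1\in\bbz^+$ and $k_2$ a positive integer coprime to $k_1$, and invoke the join construction of Section 3 to identify the smooth manifold $M^5_{k_1,k_2}=\caln^3\star_{k_1,k_2}S^3$ as an $S^1$-orbibundle over $T^2\times S^2$; the topological rigidity result of Kreck--L\"uck \cite{KrLu09} is what pins down the diffeomorphism type, and in particular yields the final assertion that $M^5_{k_1,1}\cong T^2\times S^3$. I would then recall that the complex structures $J_{2m}$ on the base arise (Sections 4--5, following Suwa \cite{Suw69} and Atiyah \cite{Ati55,Ati57}) from projectivizing rank-two bundles over the elliptic curve, and that each admits a Hamiltonian Killing field lifting to the Sasakian structure, so that each $J_{2m}$ contributes a two-dimensional Sasaki cone $\grk(\bar\cald_{k_1},J_{2m})$ per \cite{BGS06}.

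Next I would establish the \emph{bouquet count}. The integers $m=0,\dots,\lceil k_1/k_2\rceil-1$ should index the distinct conjugacy classes of maximal tori in the contactomorphism group, equivalently the distinct Hamiltonian circle actions on $T^2\times S^2$ singled out in Section 6 via the rational-homotopy computation of Bu\c{s}e \cite{Bus10}. The content here is that these $\lceil k_1/k_2\rceil$ actions are pairwise non-conjugate while all sitting inside the single isotopy class $\bar\cald_{k_1}$; this is precisely what makes them a bouquet in the sense of \cite{Boy10a,Boy10b}. I would separately account for the non-split complex structure $J\in A_{0,\grt}$, whose automorphism group does not contain the requisite Hamiltonian Killing field, so it contributes only a one-dimensional cone $\grk(\bar\cald_{k_1},J)$. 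Verifying that these cones genuinely lie in a \emph{common} isotopy class $\bar\cald_{k_1}$ of contact structures --- and that the various complex structures do not alter the cone, as emphasized in the introduction --- is the bookkeeping heart of this step.

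For the extremality dichotomy I would argue the two bullet points by different mechanisms. For each $J_{2m}$, the claim that extremal Sasakian metrics \emph{exhaust} the two-dimensional cone follows by adapting the admissible-construction results of \cite{ACGT08} to the orbifold setting: one produces explicit transversal K\"ahler metrics carrying a Hamiltonian $2$-form (or a local product of a flat $T^2$-metric with such a metric), and then propagates extremality across the full cone using the Openness Theorem of \cite{BGS06}. The $m=0$ regular ray and its constant-scalar-curvature representative should drop out as the distinguished regular quotient where the admissible construction degenerates to a CSC solution. For the non-split $J\in A_{0,\grt}$, I would instead prove \emph{nonexistence}: the relevant Sasaki--Futaki / relative-Mabuchi obstruction on the associated non-split ruled surface is nonzero, so no extremal metric can exist in that one-dimensional cone. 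This negative half --- pinning down the obstruction on the non-split bundle precisely enough to conclude it never vanishes --- is where I expect the real difficulty to lie, since the exhaustion results of \cite{ACGT08} are engineered for the split (admissible) case and give no leverage on $A_{0,\grt}$.

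The main obstacle, then, is twofold and concentrated away from the routine transcription of \cite{ACGT08}: first, establishing the exact bouquet cardinality $\lceil k_1/k_2\rceil$ by showing the Hamiltonian circle actions of Section 6 are mutually non-conjugate yet contact-isotopic (the Bu\c{s}e rational-homotopy input must be leveraged to distinguish conjugacy classes of tori, which is subtle), and second, the clean nonexistence argument for the non-split cone. I would expect the bulk of genuinely new work to sit in these two places, with the remaining assertions --- the diffeomorphism identification, the exhaustion on the split cones, and the countable infinitude of inequivalent contact structures $\bar\cald_{k_1}$ distinguished by their nonvanishing first Chern classes (contrasted in the introduction with the Lutz toric structure) --- following more directly from the cited machinery once the framework is in place.
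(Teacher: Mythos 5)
Your outline follows the paper's architecture on every positive assertion: the join plus Kreck--L\"uck rigidity (and Gorbacevi\v{c}'s homogeneous space list) for the diffeomorphism type, Suwa's classification for the complex structures $J_{2m}$ and $A_{0,\grt}$, Bu\c{s}e's rational homotopy computation --- completed by Karshon's theorem to rule out any further tori --- for the count of $\lceil\frac{k_1}{k_2}\rceil$ conjugacy classes, Lemma \ref{Chernclass} for the countably many contact structures, and the orbifold adaptation of the admissible metrics of \cite{ACGT08} on the quasi-regular quotients $(S_n,\grD_{pq})$ combined with the Openness Theorem of \cite{BGS06} for exhaustion of the two-dimensional cones. (One small correction there: for $m=0$ the CSC metrics do not come from a degeneration of the admissible construction but from the local product structure of the flat $\bbc\bbp^1$-bundle, with CSC in the weighted fibers exactly when $p=q$, which is what singles out the regular ray.)

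The genuine gap is in your nonexistence argument for $\grk(\bar{\cald}_{k_1},J)$ with $J\in A_{0,\grt}$, and it sits exactly where you predicted the difficulty would be. You propose to show that a Sasaki--Futaki / relative Mabuchi-type invariant is nonzero on the non-split ruled surface. That cannot work: $A_{0,\grt}$ is precisely the Burns--De Bartolomeis example, whose whole point is that the Futaki character vanishes identically while no CSC K\"ahler metric exists. Since $A_{0,\grt}$ carries no nontrivial Hamiltonian Killing vector fields, the relevant maximal torus is trivial, so any relative version of the invariant collapses to the absolute Futaki character, which is trivial here; a character-type obstruction can therefore never detect the failure of existence on this surface. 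The paper's route (Lemma \ref{A0lem}, backed by the citations \cite{BuBa88,Fuj92} in Section \ref{extsassec}) is structural instead: (i) a Hamiltonian Killing field would push forward to a holomorphic field with a zero on $T^2$, hence be vertical, and a fiber-preserving circle action would force $E$ to split; so there are no Hamiltonian Killing fields, and consequently any extremal metric would have constant scalar curvature (the extremal field $J\,{\rm grad}\,s$ is Hamiltonian Killing); (ii) CSC is then excluded by the Lichn\'erowicz--Matsushima theorem, because by Maruyama's computation the algebra of holomorphic vector fields of $A_{0,\grt}$ is not reductive --- equivalently, $E$ is semistable but not polystable, which is Burns--De Bartolomeis' obstruction. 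Finally, one must transfer this K\"ahler statement to the Sasaki cone: by Lemma \ref{sasconeprop} that cone is exactly the ray of the regular Reeb field, a Sasaki metric is extremal if and only if its transverse K\"ahler metric is extremal, and transverse homotheties preserve extremality, so nonexistence on the quotient kills the entire ray. Replacing your Futaki computation with this reductiveness/stability argument is necessary, not optional; as proposed, the second bullet of the theorem cannot be established.
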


\newpage
\begin{remarks}

\noindent 1. For the $m=0$ case we believe that the unique regular ray of constant scalar curvature metrics is actually the unique ray of constant scalar curvature metrics in each Sasaki cone.

\noindent 2. For $k_2>1$ the dependence of the homotopy type (and diffeomorphism type) of $M^5_{k_1,k_2}$ on $k_1$ is not understood at this time. Generally, they are lens space bundles over $T^2$.

\noindent 3. It can also be shown that for many cases with $m>0$, there exist constant scalar curvature extremal Sasaki metrics associated to a Reeb vector field in the Sasaki cone. We do not know at the present time whether these occur in all two dimensional Sasaki cones. This will be addressed in the sequel to this work \cite{BoTo12}.

\noindent 4. The Sasaki bouquet is complete with respect to a fixed contact form $\eta_{k_1,k_2}$ in the sense that there are no other Sasakian structures with contact form $\eta_{k_1,k_2}$ in the bouquet. Here we are including those Sasakian structures obtained by varying the transverse complex structure. For example, in the degree 0 case there is another $\bbc\bbp^1$'s worth of complex structures \cite{Suw69} giving $\calm\times \bbc\bbp^1$ as parameterizing the complex structures in this case. Moreover, in the degree $>0$ case there is the well-known jumping phenomenon \cite{MoKo06} as discussed briefly in Section \ref{complex structures}. This makes the moduli space of complex structures non-Hausdorff; hence, the moduli space of extremal Sasakian structures will also be non-Hausdorff.
\end{remarks}

\section{Preliminaries}
Here we give a brief review of Sasakian geometry referring to \cite{BG05} for details and further development. Sasakian  geometry can be thought of as the odd dimensional version of K\"ahlerian geometry. It consists of a smooth manifold $M$ of dimension $2n+1$ endowed with a contact 1-form together with a strictly pseudoconvex CR structure $(\cald,J)$. Explicitly it is given by a quadruple $\cals=(\xi,\eta,\Phi,g)$ where $\eta$ is a contact 1-form, $\xi$ is its Reeb vector field, $\Phi$ is an endomorphism field which annihilates $\xi$ and satisfies $J=\Phi |_\cald$ on the contact bundle $\cald=\ker\eta$. Moreover, $g$  is a Riemannian metric given by the equation 
\begin{equation}\label{sasmetric}
g=d\eta\circ(\Phi\otimes \BOne)+\eta\otimes\eta,
\end{equation}
and $\xi$ is a Killing vector field of $g$ which generates a one dimensional foliation $\calf_\xi$ of $M$ whose transverse structure is K\"ahler. There is a freedom of scaling, namely, given a Sasakian structure $\cals=(\xi,\eta,\Phi,g)$ consider the {\it transverse homothety} by sending the Sasakian structure $\cals=(\xi,\eta,\Phi,g)$ to $\cals_a=(a^{-1}\xi,a\eta,\Phi,g_a)$ 
where $a\in\bbr^+$ and 
$$g_a=ag+(a^2-a)\eta\otimes\eta.$$ 
This is another Sasakian structure which generally is inequivalent to $\cals$. Hence, Sasakian structures come in rays.

When $M$ is compact it follows from a theorem of Carri\`ere (cf. Theorem 2.6.4 of \cite{BG05}) that the closure of any leaf of $\calf_\xi$ is a torus $\gT$ of dimension at least one, and the flow is conjugate to a linear flow on the torus. This implies that for a dense subset of Sasakian structures $\cals$ on a compact manifold the leaves are all compact 1-dimensional manifolds, i.e circles. Such $\cals$ are known as {\it quasiregular} in which case the foliation $\calf_\xi$ comes from a locally free circle action. Then the quotient space $\calz$ has the structure of a projective algebraic orbifold with an induced K\"ahler form $\gro$ such that $\pi^*\gro=d\eta$ where $\pi$ is the quotient projection. If the circles comprising the leaves of $\calf_\xi$ all have the same period, $\cals$ is said to be {\it regular}, and the quotient space $\calz$ is a smooth projective algebraic variety with a trivial orbifold structure. The complex structure $\cJ$ on $\calz$ is also related to the CR structure $J$ on $M$. For any foliate vector field $X$ on $M$ we have $\pi_*\Phi X=\cJ\pi_*X$. We say that $J=\Phi |_\cald$ is the {\it horizontal lift} of $\cJ$.

Now the torus $\gT=\gT(\cals)$ lies in the group $\gA\gu\gt(\cals)$ of automorphisms of the Sasakian structure $\cals$. Letting $\gC\gR(\cald,J)$ denote the group of automorphisms of the CR structure $(\cald,J)$,  $\gC\go\gn(M,\cald)$ the Fr\'echet Lie group of contactomorphisms of $\cald$, and $\gC\go\gn(M,\eta)$ the Fr\'echet Lie subgroup consisting of elements in $\gC\go\gn(M,\cald)$ that leave the contact 1-form $\eta$ invariant, we have natural inclusions (including arrows)
$$\begin{matrix} &&&  \gC\gR(\cald,J) && \\
            &&\nearrow && \searrow & \\
            \gT\subset \gA\gu\gt(\cals)&&&&& \gC\go\gn(M,\cald) \\
            &&\searrow && \nearrow & \\
            &&& \gC\go\gn(M,\eta) &&
\end{matrix}.$$
It is known that $\gC\gR(\cald,J)$ is a compact Lie group except for the standard CR structure on $S^{2n+1}$ \cite{Lee96,Sch95} and that $\gC\go\gn(M,\eta)$ is a closed Fr\'echet Lie subgroup of $\gC\go\gn(M,\cald)$ \cite{Boy10a}.  Furthermore, $\gA\gu\gt(\cals)$ is a closed Lie subgroup of both $\gC\gR(\cald,J)$ and $\gC\go\gn(M,\eta)$. 

It is well known that for any contact 1-form $\eta$ the one dimensional Lie group $\gR_\xi$ generated by the Reeb vector field lies in the center of $\gC\go\gn(M,\eta)$ and hence when $\cals$ is Sasakian (or even K-contact), in the center of $\gA\gu\gt(\cals)$. However, $\gR_\xi$ is not necessarily a closed subgroup of either $\gA\gu\gt(\cals)$ nor $\gC\go\gn(M,\eta)$, but its closure is a torus $\gT_k$ of dimension $k\leq n+1$ which also lies in the center of both. Note that for any Sasakian structure we have $\dim~\gA\gu\gt(\cals)\geq 1$, and if strict inequality holds $\gA\gu\gt(\cals)$ must contain a 2-torus $\gT_2$. We are also interested in the Lie algebra of these groups which we denote with lower case gothic letters, viz. $\gt_k,\ga\gu\gt(\cals),\gc\gr(\cald,J),\gc\go\gn(M,\eta),\gc\go\gn(M,\cald)$. Given a contact structure $\cald$ with a fixed contact form $\eta$, a torus $\gT$ in $\gC\go\gn(M,\eta)$ is said to be of {\it Reeb type} \cite{BG00b,BG05} if the Reeb vector field $\xi$ of $\eta$ lies in the Lie algebra $\gt$ of $\gT$. In this paper we only consider torus actions of Reeb type.

\subsection{Sasaki Cones and the Sasaki Bouquet}
Let $(M^{2n+1},\cald)$ be a contact structure of Sasaki type. The Sasaki cone $\grk(\cald,J)$ was first defined in \cite{BGS06} to be the moduli space of Sasakian structures associated to a fixed underlying strictly pseudoconvex CR structure $(\cald,J)$. However, it is often convenient to fix a maximal torus $\gT_k(\cald,J)$ of Reeb type in the CR automorphism group $\gC\gR(\cald,J)$ and consider the `unreduced' Sasaki cone $\gt^+_k(\cald,J)$ defined to be the subset of all $X\in \gt_k(\cald,J)$ such that $\eta(X)>0$ where $\gt_k(\cald,J)$ denotes the Lie algebra of $\gT_k(\cald,J)$, $\eta$ is any contact form representing $\cald$, and $k$ denotes the dimension of the maximal torus. Then $\gt^+_k(\cald,J)$ is related to $\grk(\cald,J)$ by $\grk(\cald,J)=\gt_k^+(\cald,J)/\calw(\cald,J)$ where $\calw(\cald,J)$ is the Weyl group of $\gC\gR(\cald,J)$. Note that for a contact structure of Sasaki type on a compact manifold $1\leq k\leq n+1$, and $k=n+1$ is the toric case. Associated to a fixed oriented contact structure $\cald$ there are many compatible almost complex structures $J$, and some may be associated to K-contact or Sasakian structures. These give rise to bouquets $\gB(\cald)=\cup_\gra\grk(\cald,J_\gra)$ of Sasaki cones as described in \cite{Boy10a,Boy10b}. Generally, the Sasaki cones in a bouquet can have varying dimension (see Theorem 8.6 of \cite{Boy10a} for an example) and the index set can be arbitrary. For examples of Sasaki bouquets with toric Sasaki cones and finite index set on $S^2\times S^3$ see \cite{Boy10b,Boy11,BoPa10}. In the present paper the Sasaki cones of our bouquets occuring on $T^2\times S^3$ and certain related manifolds all have finite index set and in each bouquet all Sasaki cones but one has dimension 2. Generally, it is unknown whether or not Sasaki bouquets always have finite index set. A bouquet consisting of precisely $N$ Sasaki cones is called an {\it $N$-bouquet} and denoted by $\gB_N(\cald)$. In \cite{Boy10a} the index set of the bouquets were taken to be what was called the set of $T$-equivalence classes of almost complex structures that correspond to the same conjugacy class of maximal tori. Generally, there are large families of almost complex structures corresponding to the same conjugacy class of maximal tori; hence, there are families of Sasakian structures corresponding to the same Sasaki cone. So we can get moduli of Sasakian structures belonging to a fixed contact structure; however, as discussed in \cite{Boy10b} this moduli space can be non-Hausdorff. Indeed, this is the case in the present paper. An $N$-bouquet $\gB_N(\cald)$ is {\it complete} if $N$ is precisely the number of conjugacy classes of maximal tori in $\gC\go\gn(M,\cald)$, and it is {\it complete with respect to $\eta$} if $N$ is precisely the number of maximal tori in $\gC\go\gn(M,\eta)$. Notice that if $\gB_N(\cald)$ is complete with respect to $\eta$, then the intersection of the Sasaki cones in $\gB_N(\cald)$ contains the ray of the Reeb vector field $\xi$ of $\eta$.

\begin{remark}\label{sasconerem}
It is important to realize that a choice of Reeb vector field $\xi$ in a Sasaki cone $\grk(\cald,J)$ uniquely determines a Sasakian structure $\cals=(\xi,\eta,\Phi,g)$ since within a contact structure $\cald$ a Reeb vector field $\xi$ belongs to a unique contact form $\eta$, $\Phi$ is completely determined by $\xi$ and $J$, and the Sasaki metric $g$ is then determined by Equation (\ref{sasmetric}). As a consequence we often talk about a Sasakian structure being an element of the Sasaki cone $\grk(\cald,J)$.
\end{remark}

\subsection{The Join Construction}
Products of K\"ahlerian manifolds are K\"ahler, but products of Sasakian manifolds do not even have the correct dimension. Nevertheless, one can easily construct new Sasakian manifolds from old regular (or more generally quasi-regular) ones by constructing circle bundles over the product of K\"ahler manifolds (or orbifolds). This is the join construction as described in \cite{BGO06} and in Section 7.6.2 of \cite{BG05}. 
However, the Sasakian (or K-contact) structure is actually superfluous to the construction. It is natural to consider the join of quasi-regular contact manifolds; however, in this paper we only apply the join construction to regular contact structures. Let $M_i$ for $i=1,2$ be compact regular contact manifolds with Reeb vector fields $\xi_i$, respectively. These vector fields generate free circle actions on $M_i$ and the quotient manifolds are smooth symplectic manifolds $\calz_i$.  Then the quotient of the product $T^2=S^1\times S^1$ action on $M_1\times M_2$ is $\calz_1\times \calz_2$. Taking primitive symplectic forms $\gro_i$ on $\calz_i$ we consider the symplectic form $\gro_{k_1,k_2}=k_1\gro_1+k_2\gro_2$ on $\calz_1\times \calz_2$ where $k_1,k_2$ are relatively prime positive integers. Then by the Boothby-Wang construction the total space of the principal circle bundle over $\calz_1\times \calz_2$ corresponding to the cohomology class $[\gro_{k_1,k_2}]\in H^2(\calz_1\times \calz_2,\bbz)$ has a natural regular contact structure whose contact form $\eta_{k_1,k_2}$ satisfies $d\eta_{k_1,k_2}=\pi^*\gro_{k_1,k_2}$ where $\pi$ is the natural bundle projection. The total space of this bundle is denoted by $M_1\star_{k_1,k_2}M_2$ and is called {\it the join} of $M_1$ and $M_2$.

Choosing a complex structure (not necessarily the product structure) on the base $\calz_1\times \calz_2$ that makes $(\calz_1\times \calz_2,\gro_{k_1,k_2})$ a K\"ahler manifold, then gives rise via a well known construction \cite{BG05} to a Sasakian structure on the join $M_1\star_{k_1,k_2}M_2$.

\section{The Diffeomorphism Types}\label{secdiff}

We consider the join of  $S^3$ with its standard Sasakian structure and the nilmanifold $\caln^3$ constructed as the compact quotient of the Heisenberg group $\calh^3(\bbr)$ by its integral lattice $\calh^3(\bbz)$. The 3-dimensional Heisenberg group $\calh^3(\bbr)$ is given in coordinates by the nilpotent matrices of the form 
$$\left\{\left(
\begin{array}{ccc}
1 &x & z\\
0 & 1 & y\\
0 & 0 & 1 \end{array}\right)\  |\
x,y,z\in\bbr\right\}.$$
It has a natural bi-Sasakian structure \cite{Boy09}, and if we consider the nilmanifold $\caln^3$ to be the manifold of left cosets $\calh^3(\bbr)/\calh^3(\bbz)$, it inherits the right Sasakian structure from $\calh^3(\bbr)$. Actually it has a family of Sasakian structures coming from the family of underlying CR structures $(\cald,J_\grt)$. Now $\caln^3$ fibers over the 2-torus $T^2$ with its flat K\"ahlerian structures, and a result of Folland \cite{Fol04} says that there is a 1-1 correspondence between elements of the moduli space $\calm$ of complex structures on $T^2$ and the underlying CR structures on $\caln^3$. Hence, the moduli space $\calm$ parameterizes the standard Sasakian structures on $\caln^3$. These all have a transverse K\"ahler structure with a flat transverse metric. Thus, we have a family of inequivalent `standard' Sasakian structures $\cals_\grt=(\xi,\eta,\Phi_\grt,g)$ on $\caln^3$ that are equivalent as Riemannian structures, where $\grt\in \calm$. 

Next we determine the diffeomorphism type of $M^5_{k_1,1}=\caln^3\star_{k_1,1}S^3$ with their induced Sasakian structures and show that the 5-manifolds $M^5_{k_1,k_2}=\caln^3\star_{k_1,k_2}S^3$ with $k_2>1$ have a fundamental group that is a non-split central extension of $\bbz^2$ when $k_1,k_2$ are relatively prime positive integers. Explicitly, we shall prove

\begin{theorem}\label{diffeothm}
Let $M^5_{k_1,k_2}=\caln^3\star_{k_1,k_2} S^3$ be the regular Sasakian $(k_1,k_2)$-join of the nilmanifold $\caln^3$ with Sasakian structure $\cals_\grt$ and $S^3$ with its standard Sasakian structure where $\gcd(k_1,k_2)=1$. Then $M^5_{k_1,k_2}$ is an $L(k_2,1)$ lens space bundle over $T^2$ with $H_1(M^5_{k_1,k_2},\bbz)\approx \bbz^2$ and non-Abelian fundamental group when $k_2>1$.   Moreover, $\caln^3\star_{k_1,1} S^3$ is diffeomorphic to $T^2\times S^3$ for all $k_1\in\bbz^+$. However, when $k_2>1$, the fundamental group $\pi_1(M^5_{k_1,k_2})$ is a non-Abelian central extension of $\bbz^2$ by $\bbz_{k_2}$; hence, the lens space bundle is non-trivial. 
\end{theorem}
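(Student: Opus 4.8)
The plan is to analyze the join $M^5_{k_1,k_2}=\caln^3\star_{k_1,k_2}S^3$ through its structure as a circle bundle over $T^2\times S^2$, and to separately handle the fibration structure, the homology, and the fundamental group. First I would unwind the join construction explicitly: since $\caln^3$ is the regular Boothby-Wang circle bundle over $T^2$ and $S^3$ is the regular circle bundle over $S^2=\bbc\bbp^1$, the quotient by the product $T^2$-action gives the base $\calz_1\times\calz_2=T^2\times S^2$, and $M^5_{k_1,k_2}$ is the principal $S^1$-bundle corresponding to the class $[\gro_{k_1,k_2}]=k_1[\gro_1]+k_2[\gro_2]\in H^2(T^2\times S^2,\bbz)$. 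The key structural observation is that $M^5_{k_1,k_2}$ also fibers another way: quotienting only by the $S^1$ coming from $S^3$ realizes $M^5_{k_1,k_2}$ as a bundle over $\caln^3\times_{S^1}(\text{something})$, but the cleaner route is to exhibit the lens space fiber directly. I would argue that fixing the $T^2$-direction (the base torus), the fiber over each point of $T^2$ is the quotient of $S^3$ by a cyclic action determined by $k_2$, which is precisely the lens space $L(k_2,1)$; making this precise requires tracking how the $k_1$ and $k_2$ weights enter the Seifert/orbifold data of the bundle.

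**Computing the homology and the fundamental group.** For $H_1$ and $\pi_1$, I would use the Gysin/homotopy exact sequence of the circle bundle $S^1\hookrightarrow M^5_{k_1,k_2}\to T^2\times S^2$. Since $T^2\times S^2$ is simply connected in its $S^2$-factor but has $\pi_1=\bbz^2$ from $T^2$, the long exact homotopy sequence gives the extension
\[
1\to \pi_1(S^1)\to \pi_1(M^5_{k_1,k_2})\to \pi_1(T^2\times S^2)\to 1,
\]
i.e. a central extension of $\bbz^2$ by $\bbz$ (central because the fiber circle is generated by the Reeb flow, which lies in the center by the discussion in Section 2). The extension class is read off from the Euler class $[\gro_{k_1,k_2}]$ evaluated against $H_2(T^2\times S^2)$. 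The crucial arithmetic point is that $[\gro_{k_1,k_2}]$ pairs as $k_1$ with the $T^2$-fundamental class and as $k_2$ with the $S^2$-class; the $S^2$-pairing kills the fiber generator to order $k_2$ in $H_1$, producing the torsion/quotient $\bbz_{k_2}$, while the $T^2$-pairing (being $k_1$ times a class that is a product of one-dimensional generators) contributes nothing to $H_1$ because it factors through cup products of degree-one classes. Abelianizing and running the homology exact sequence should then yield $H_1(M^5_{k_1,k_2},\bbz)\approx\bbz^2$ regardless of $k_2$, with the non-abelianness of $\pi_1$ surviving as a nontrivial commutator landing in the $\bbz_{k_2}$ when $k_2>1$. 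I would verify non-abelianness by computing the commutator of lifts of the two $T^2$-generators and showing it equals the fiber generator raised to a power that is a nonzero element of $\bbz_{k_2}$, which is exactly the statement that the central extension is non-split.

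**The $k_2=1$ case and the diffeomorphism to $T^2\times S^3$.** When $k_2=1$ the lens space fiber $L(1,1)=S^3$, so $M^5_{k_1,1}$ is an $S^3$-bundle over $T^2$, and the extension above splits since $\bbz_{k_2}$ is trivial, forcing $\pi_1\approx\bbz^2$ abelian. To upgrade this from a homotopy statement to an honest diffeomorphism $M^5_{k_1,1}\cong T^2\times S^3$ for \emph{all} $k_1$, I would invoke the topological rigidity result of Kreck and L\"uck \cite{KrLu09} advertised in the introduction: one checks that $M^5_{k_1,1}$ and $T^2\times S^3$ have the same fundamental group $\bbz^2$, are both spin (orientable with appropriate $w_2$), and agree on the remaining bordism-theoretic and cohomological invariants that the rigidity theorem requires, so that the two $S^3$-bundles over $T^2$ are diffeomorphic. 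The main obstacle I anticipate is precisely this last step: the homology and homotopy computations are essentially bookkeeping with the Gysin sequence, but showing that the $k_1$-dependence washes out at the level of diffeomorphism type—rather than merely homotopy type—requires setting up the hypotheses of \cite{KrLu09} carefully (verifying the normal $1$-type, the relevant characteristic classes, and that the only potential bundle invariant, an Euler-class-type obstruction valued in $H^4(T^2)=0$, vanishes so that all such $S^3$-bundles over $T^2$ are trivial). That verification, rather than the algebra of the central extension, is where the real work lies.
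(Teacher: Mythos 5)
Your strategy---realize $M^5_{k_1,k_2}$ as the circle bundle over $T^2\times S^2$ with Euler class $[\gro_{k_1,k_2}]$, run the homotopy exact sequence, and invoke Kreck--L\"uck for $k_2=1$---is viable, and its first half genuinely differs from the paper's: the paper instead exhibits $M^5_{k_1,k_2}$ as a homogeneous space $G/H$ with $G=\calh^3(\bbr)\times SU(2)$ (Lemma~\ref{joinlem}) and reads off $\pi_1(M^5_{k_1,k_2})\cong\calh^3(\bbz)/k_2Z(\bbz)$, so non-abelianness comes for free from the Heisenberg relation, whereas you must extract it from extension data (the lens space fiber, which you sketch, is handled in the paper by citing Proposition 7.6.7 of \cite{BG05}). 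Two corrections to your bookkeeping, both fixable. First, the sequence you display is not exact as written: the connecting homomorphism $\pi_2(T^2\times S^2)=\pi_2(S^2)\to\pi_1(S^1)$ is pairing with the Euler class, hence multiplication by $k_2$, so the fiber class survives only as $\bbz_{k_2}$ and the correct central extension is $0\to\bbz_{k_2}\to\pi_1(M^5_{k_1,k_2})\to\bbz^2\to 0$ (you say this in prose afterwards; note the same injectivity of multiplication by $k_2$ gives $\pi_2(M^5_{k_1,k_2})=0$, which you will need). Second, your commutator step must be made quantitative: the commutator of lifts of the two $T^2$-generators equals $\langle[\gro_{k_1,k_2}],[T^2\times\{pt\}]\rangle=k_1$ times the fiber class, and this is nonzero---indeed a generator of $\bbz_{k_2}$---precisely because $\gcd(k_1,k_2)=1$. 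Coprimality, which you never invoke, is what makes the commutator nontrivial for $k_2>1$, and it simultaneously yields $[\pi_1,\pi_1]=\bbz_{k_2}$ and hence $H_1\cong\bbz^2$ more cleanly than your Gysin sketch.

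The genuine gap is in the $k_2=1$ step. Your claim that an Euler-class-type obstruction in $H^4(T^2)=0$ vanishes ``so that all such $S^3$-bundles over $T^2$ are trivial'' is false: oriented $S^3$-bundles over $T^2$ are classified by $H^2(T^2;\pi_1(SO(4)))\cong\bbz_2$, so there are exactly two of them---the trivial (spin) one and a nontrivial (non-spin) one---and Theorem 0.18 of \cite{KrLu09} says only that, given $\pi_1\cong\bbz^2$ (the group of a $2$-manifold) and $\pi_2=0$, the oriented diffeomorphism type is determined by $w_2$. Hence the entire substantive content of this step is proving $w_2(M^5_{k_1,1})=0$, which you simply assume (``both spin'') without argument. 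The paper supplies this in Lemma~\ref{Chernclass} by computing $c_1(\cald_{k_1,k_2})=2k_1\grg$, an even class whose mod $2$ reduction is $w_2$; alternatively it notes that $M^5_{k_1,1}$ is homogeneous and appeals to Gorbacevi\v{c}'s classification \cite{Gor78}, on which the nontrivial bundle does not appear. Within your framework the quickest repair is to observe that $TM^5_{k_1,1}$ splits as $\pi^*T(T^2\times S^2)$ plus a trivial line bundle (the vertical bundle is trivialized by the Reeb vector field), so $w_2(M^5_{k_1,1})=\pi^*w_2(T^2\times S^2)=0$ since $T^2\times S^2$ is spin. Without some such computation the argument does not close, because a priori $M^5_{k_1,1}$ could have been the nontrivial, non-spin $S^3$-bundle.
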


To prove this theorem, we first notice that $\caln^3\star_{k_1,k_2} S^3$ is a homogeneous manifold. This can be seen as follows: from the join construction we can write $\caln^3\star_{k_1,k_2} S^3$ as $(\caln^3\times S^3)/S^1(k_1,k_2)$ where the circle $S^1(k_1,k_2)$ is generated by the vector field $k_2\xi_1-k_1\xi_2$. The Reeb vector fields are given explicitly in coordinates $(x,y,z)$ on $\caln^3$ and $(z_1,z_2)$ on $\bbc^2$ by
$\xi_1=\d_z$ and $\xi_2$ is the restriction of the infinitesimal generator of the action $(z_1,z_2)\mapsto (e^{i\theta}z_1,e^{i\theta}z_2)$ to the unit sphere $S^3$ which we identify with the Lie group $SU(2)$ by 
$$(z_1,z_2)\longleftrightarrow \left(
\begin{matrix}z_1 & z_2\\
              -\bar{z}_2 & \bar{z}_1 
\end{matrix}
\right), \qquad |z_1|^2+|z_2|^2=1.$$
The  group $G=\calh^3(\bbr)\times SU(2)$ acts on $\caln^3\times S^3$ by the product action. Consider the subgroup $H$ of $G$ defined by
$$H=\{\left(
\begin{matrix}1&a&c+k_2t\\
              0&1&b \\
              0&0&1
\end{matrix}
\right)\times \left(
\begin{matrix}e^{-2\pi ik_1t}&0 \\
              0& e^{2\pi ik_1t}
\end{matrix}
\right) ~|~a,b,c\in \bbz,\quad t\in\bbr\}.
$$
It is a closed Lie subgroup and we have
\begin{lemma}\label{joinlem}
The homogeneous manifold $G/H$ can be identified with the join $\caln^3\star_{k_1,k_2} S^3$.
\end{lemma}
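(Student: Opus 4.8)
The plan is to exhibit an explicit diffeomorphism between the homogeneous space $G/H$ and the join $\caln^3\star_{k_1,k_2}S^3=(\caln^3\times S^3)/S^1(k_1,k_2)$. Both are quotients, so the cleanest route is to show that $H$ is precisely the stabilizer of the quotient map realizing the join. First I would recall that $\caln^3=\calh^3(\bbr)/\calh^3(\bbz)$ and $S^3=SU(2)$, so that $\caln^3\times S^3=(\calh^3(\bbr)\times SU(2))/(\calh^3(\bbz)\times\{1\})=G/\grG$ where $\grG=\calh^3(\bbz)\times\{1\}$ is the integer lattice subgroup. From this identification, the join is obtained by further quotienting by the circle $S^1(k_1,k_2)$ generated by $k_2\xi_1-k_1\xi_2$, so it suffices to check that the subgroup $H$ is exactly the preimage in $G$ of this circle action composed with the lattice $\grG$.

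The key computation is to identify the flow of the vector field $k_2\xi_1-k_1\xi_2$ inside $G$. Since $\xi_1=\d_z$ on $\caln^3$, its lift to $\calh^3(\bbr)$ flows along the center, translating the $z$-coordinate; at time $t$ the contribution of $k_2\xi_1$ is the central matrix with entry $k_2 t$ in the upper-right corner, which matches the $c+k_2t$ entry appearing in the definition of $H$ (with $a=b=0$, $c\in\bbz$). Meanwhile $\xi_2$ generates the action $(z_1,z_2)\mapsto(e^{i\theta}z_1,e^{i\theta}z_2)$, which under the identification with $SU(2)$ is the diagonal one-parameter subgroup $\mathrm{diag}(e^{i\theta},e^{-i\theta})$; so $-k_1\xi_2$ at parameter $t$ gives $\mathrm{diag}(e^{-2\pi i k_1 t},e^{2\pi i k_1 t})$, exactly the second factor in $H$. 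Thus I would verify that the one-parameter subgroup generated by $k_2\xi_1-k_1\xi_2$, together with the lattice $\calh^3(\bbz)$ in the first factor (contributing the integer parameters $a,b,c$), generates precisely $H$. This shows $H$ is the group generated by $\grG$ and $S^1(k_1,k_2)$, so that $G/H=(G/\grG)/S^1(k_1,k_2)=(\caln^3\times S^3)/S^1(k_1,k_2)$.

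The main subtlety, and the step I expect to require the most care, is the interaction between the lattice $\calh^3(\bbz)$ and the central circle direction: the parameter $t\in\bbr$ in the upper-right entry $c+k_2t$ combines a continuous flow along the center with the discrete lattice translations, and one must confirm that these assemble into a closed subgroup and that no additional identifications (beyond those defining the join) are introduced. Concretely, I would check that $H$ is closed (which was already asserted in the statement) and that the natural map $G/H\to(\caln^3\times S^3)/S^1(k_1,k_2)$ is a well-defined bijection; since both sides are smooth manifolds and the map is induced by the identity on $G$, it is automatically a diffeomorphism once it is a bijection. The final point to confirm is that the $SU(2)$ identification of $\xi_2$ as the diagonal subgroup is consistent with the unit-sphere normalization $|z_1|^2+|z_2|^2=1$, so that the circle $S^1(k_1,k_2)$ embedded in $G$ has the stated generator; this is a routine matching of conventions but is what ties the explicit matrix form of $H$ to the abstract join construction.
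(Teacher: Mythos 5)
Your proposal is correct and follows essentially the same route as the paper: the paper realizes $H$ as the image of an explicit epimorphism $\psi:\calh^3(\bbz)\times\bbr\ra{1.4} H$ with kernel $\bbz$, shows $\calh^3(\bbz)$ is normal in $H$ with $H/\calh^3(\bbz)\approx\bbr/\bbz\approx S^1$, and identifies $G/H=(G/\calh^3(\bbz))/(H/\calh^3(\bbz))$ with $(\caln^3\times S^3)/S^1(k_1,k_2)$, exactly your two-step quotient with $H$ generated by the lattice and the one-parameter subgroup. The only point you should make explicit in checking that ``no additional identifications'' arise is the one the paper spells out: the $\bbr$-action on $\caln^3\times S^3$ is not effective (the subgroup $t\in\bbz$ acts trivially), and the residual $S^1=\bbr/\bbz$ action is free precisely because $\gcd(k_1,k_2)=1$, which is where relative primality enters.
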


\begin{proof}
Consider the map $\psi:\calh^3(\bbz)\times \bbr\ra{1.6} H$ defined by 
$$\psi(N,t)=\Bigl(N\cdot \left(
\begin{matrix}1&0&k_2t\\
              0&1&0 \\
              0&0&1
\end{matrix}
\right)\Bigr)\times \left(
\begin{matrix}e^{-2\pi ik_1t}&0 \\
              0& e^{2\pi ik_1t}
\end{matrix}
\right).$$
It is the defining map for $H$ and a group epimorphism. The kernel of $\psi$ is
$$\ker~\psi=\{\Bigl(\left(
\begin{matrix}1&0&k_2e\\
              0&1&0 \\
              0&0&1
\end{matrix}
\right),-e\Bigr)~|~e\in \bbz\}\approx \bbz,$$
and we have an isomorphism $H\approx (\calh^3(\bbz)\times \bbr)/\bbz$. Notice that in $\calh^3(\bbz)\times \bbr$ we have $\ker~\psi\cap \calh^3(\bbz)={\rm id}$, so $\calh^3(\bbz)$ is a subgroup of $H$. In fact, it is a normal subgroup of $H$, and $H/\calh^3(\bbz)\approx\bbr/\bbz\approx S^1$. We now identify $G/H$ with $(G/\calh^3(\bbz))/(H/\calh^3(\bbz))$ and the latter with the join $\caln^3\star_{k_1,k_2} S^3$. First we have
$$G/\calh^3(\bbz)=\bigl(\calh^3(\bbr)\times SU(2)\bigr)/\calh^3(\bbz)=\caln^3\times S^3.$$ 
Consider the action of the $\bbr$ subgroup of $H$ on $\caln^3\times S^3$ given in coordinates $([x,y,z];z_1,z_2)$ by 
$$([x,y,z];z_1,z_2)\mapsto ([x,y,z+k_2t];e^{-2\pi ik_1t}z_1,e^{-2\pi ik_1t}z_2)$$
where $t\in\bbr$, and the brackets denote the equivalence class in $\calh^3(\bbr)$ modulo $\calh^3(\bbz)$. Of course, this action is not effective, since the subgroup $\bbz$ obtained by restricting $t$ to $\bbz$ fixes all points of $\caln^3\times S^3$. However, since $(k_1,k_2)$ are relatively prime, the action of the quotient group $\bbr/\bbz\approx S^1(k_1,k_2)$ is free, and we have
$$G/H\approx (G/\calh^3(\bbz))/S^1(k_1,k_2)\approx (\caln^3\times S^3)/S^1(k_1,k_2).$$
But the right hand side is just the join construction as described in Section 7.6.2 of \cite{BG05}. Thus, $G/H\approx \caln^3\star_{k_1,k_2} S^3$.
\end{proof}

Next we determine the weak homotopy type of $\caln^3\star_{k_1,k_2} S^3$.

\begin{lemma}\label{pi1join}
For each pair of relatively prime positive integers $(k_1,k_2)$ we have
\begin{enumerate}
\item $\pi_1(\caln^3\star_{k_1,1} S^3)\approx\pi_1(T^2\times S^3)\approx \bbz^2.$
\item If $k_2>1$, $\pi_1(\caln^3\star_{k_1,k_2} S^3)$ is a non-Abelian central $\bbz_{k_2}$-extension of $\bbz^2$.
\item $\pi_i(\caln^3\star_{k_1,k_2} S^3)\approx \pi_i(S^3)~ \text{for}~i\geq 2;$ In particular, 
\newline $\pi_2(\caln^3\star_{k_1,k_2} S^3)=0$.
\end{enumerate}
\end{lemma}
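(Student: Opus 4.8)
The plan is to put the freeness of the $S^1(k_1,k_2)$-action from Lemma \ref{joinlem} to work: since the action is free, $\pi$ is a principal circle bundle
$$S^1(k_1,k_2)\hookrightarrow \caln^3\times S^3\xrightarrow{\ \pi\ }M,\qquad M:=\caln^3\star_{k_1,k_2}S^3,$$
and all three assertions should fall out of its long exact homotopy sequence once the homotopy type of the total space is understood. First I would record that $\caln^3$ is aspherical: the Heisenberg group $\calh^3(\bbr)\cong\bbr^3$ is contractible and $\calh^3(\bbz)$ acts freely and properly discontinuously, so $\caln^3$ is a $K(\calh^3(\bbz),1)$, whence $\pi_1(\caln^3)\approx\calh^3(\bbz)$ and $\pi_i(\caln^3)=0$ for $i\geq 2$. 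Combined with $\pi_1(S^3)=\pi_2(S^3)=0$ this gives $\pi_1(\caln^3\times S^3)\approx\calh^3(\bbz)$, $\pi_2(\caln^3\times S^3)=0$, and $\pi_i(\caln^3\times S^3)\approx\pi_i(S^3)$ for $i\geq 2$.

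Next I would feed this into the long exact sequence of the fibration. Because $\pi_i(S^1)=0$ for $i\geq 2$, the high-degree part collapses to $\pi_i(M)\approx\pi_i(\caln^3\times S^3)\approx\pi_i(S^3)$ for $i\geq 3$, while the tail of the sequence reads
$$0\to\pi_2(M)\to\pi_1(S^1)\xrightarrow{\ \iota_*\ }\pi_1(\caln^3\times S^3)\to\pi_1(M)\to 0,$$
that is $0\to\pi_2(M)\to\bbz\xrightarrow{\iota_*}\calh^3(\bbz)\to\pi_1(M)\to 0$, where $\iota_*$ is induced by the inclusion of the fibre.

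The crux — and the step I expect to take the most care — is to identify $\iota_*$, i.e. the class in $\calh^3(\bbz)$ of a single fibre orbit. From the coordinate description of the action in the proof of Lemma \ref{joinlem}, a generator of $\pi_1(S^1(k_1,k_2))$ is the orbit $t\mapsto([x,y,z+k_2t];e^{-2\pi ik_1t}z_1,e^{-2\pi ik_1t}z_2)$ for $t\in[0,1]$; its $S^3$-component is null-homotopic since $S^3$ is simply connected, while its $\caln^3$-component increases $z$ by $k_2$ and so represents $\gamma^{k_2}$, where $\gamma$ denotes the central generator of $\calh^3(\bbz)$ (the class of $z\mapsto z+1$). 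Hence $\iota_*(1)=\gamma^{k_2}$ and $\mathrm{im}\,\iota_*=\langle\gamma^{k_2}\rangle=k_2\bbz$ inside the infinite cyclic center $Z(\calh^3(\bbz))$. Since $\gamma$ has infinite order, $\iota_*$ is injective, forcing $\pi_2(M)=0$; together with the collapse above this proves (3). Finally $\pi_1(M)\approx\calh^3(\bbz)/\langle\gamma^{k_2}\rangle$. For $k_2=1$ we are quotienting by the full center, which for the Heisenberg group coincides with the commutator subgroup, so $\pi_1(M)\approx\calh^3(\bbz)/Z(\calh^3(\bbz))\approx\bbz^2$, giving (1). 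For $k_2>1$ the quotient fits into $0\to\bbz_{k_2}\to\pi_1(M)\to\bbz^2\to 0$, a central extension that is non-Abelian because the commutator $[X,Y]=\gamma$ of the standard generators descends to a generator of $\bbz_{k_2}\neq 0$; this is (2). The only point I would take pains to state cleanly is the orientation of $\iota_*$, but as only $\langle\gamma^{k_2}\rangle$ enters, its sign is irrelevant.
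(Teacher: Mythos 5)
Your proof is correct, and for parts (1) and (2) it is essentially the paper's argument: both extract the exact sequence $0\to\bbz\to\calh^3(\bbz)\to\pi_1(M)\to 0$ from the circle bundle $S^1\to\caln^3\times S^3\to M$, identify the map $\bbz\to\calh^3(\bbz)$ as $n\mapsto\gamma^{k_2n}$ into the center (this is the paper's Equation (\ref{conhomo}), there called the connecting homomorphism though it is, as you say, the fibre-inclusion-induced map), and conclude $\pi_1(M)\approx\calh^3(\bbz)/k_2Z(\bbz)$, with non-Abelianness for $k_2>1$ read off from the commutator $[X,Y]=\gamma$ surviving as a generator of $\bbz_{k_2}$ in the quotient. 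Where you genuinely diverge is in the treatment of $\pi_2$ and the higher groups. The paper obtains $\pi_2(M)=0$ and $\pi_i(M)\approx\pi_i(S^3)$ for $i\geq 2$ from a \emph{second} fibration, $H\to G\to G/H$ with $G=\calh^3(\bbr)\times SU(2)$ (Lemma \ref{joinlem}): since $G$ is $2$-connected and $\pi_i(H)=\pi_i(H_0)=0$ for $i\geq 1$, one gets $\pi_2(M)\approx\pi_1(H)=0$, and only then is the left-hand zero of the short exact sequence above available. You instead stay entirely within the single circle-bundle fibration: asphericity of $\caln^3$ gives $\pi_i(\caln^3\times S^3)\approx\pi_i(S^3)$ for $i\geq 2$, hence $\pi_i(M)\approx\pi_i(S^3)$ for $i\geq 3$ at once, and --- the nice twist --- you prove $\pi_2(M)=0$ from injectivity of $\iota_*$, verified directly by computing the class of a fibre orbit to be $\gamma^{\pm k_2}$, an element of infinite order. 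This reverses the paper's logical order (it deduces injectivity from $\pi_2(M)=0$; you deduce $\pi_2(M)=0$ from injectivity) and makes the homogeneous-space presentation unnecessary for this lemma, at the mild cost of invoking the explicit orbit description from the proof of Lemma \ref{joinlem} to pin down $\iota_*$ --- a description the paper needs anyway to define the action. Both arguments are sound; yours is the leaner one here, while the paper's $G/H$ fibration earns its keep elsewhere, e.g.\ homogeneity of $M^5_{k_1,1}$ is used again in the proof of Theorem \ref{diffeothm} via Gorbacevi\v{c}'s classification.
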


\begin{proof}
Applying the long exact homotopy sequence to the bundle 
$$H\ra{1.4}G\ra{1.4}G/H\approx \caln^3\star_{k_1,k_2} S^3$$ 
of Lemma \ref{joinlem} we have
$$\ra{1.7}\pi_i(G)\ra{1.7}\pi_i(G/H)\ra{1.7}\pi_{i-1}(H)\ra{1.7}\pi_{i-1}(G)\ra{1.7} \pi_{i-1}(G/H)\ra{1.7}. $$
Now $G$ is $2$-connected, so we have the group isomorphism $\pi_2(G/H)\approx\pi_{1}(H)$,  and the set bijection\footnote{Generally, $\pi_0(X)$ is just the set of path components of $X$ and has no group structure; however, if $\grG$ is a discrete group $\pi_0(\grG)$ is isomorphic to $\grG$ itself with its group structure.} $\pi_1(G/H)\approx\pi_0(H)$. To proceed further we notice that the connected component $H_0$ of $H$ is the normal subgroup given by matrices of the form
$$\left(
\begin{matrix}1&0&k_2t\\
              0&1&0 \\
              0&0&1
\end{matrix}
\right)\times 
\left(
\begin{matrix}e^{-2\pi ik_1t}&0 \\
              0& e^{2\pi ik_1t}
\end{matrix}
\right),\qquad t\in \bbr.$$
For $k_2\in \bbz^+$ this has the homotopy type of $\bbr$, so $\pi_i(H)=\pi_i(H_0)=0$ for $i\geq 1$. Thus, in particular using Lemma \ref{joinlem} we have 
$$\pi_2(\caln^3\star_{k_1,k_2} S^3)=\pi_2(G/H)\approx\pi_{1}(H)= 0.$$
Then the long exact sequence of the bundle $S^1\ra{1.5}\caln\times S^3\ra{1.5}\caln^3\star_{k_1,k_2} S^3$ gives the short exact sequence
$$0\ra{1.8}\bbz\fract{\grd}{\ra{1.8}}\calh^3(\bbz)\ra{1.8}\pi_1(\caln^3\star_{k_1,k_2} S^3)\ra{1.8} 0,$$
where the connecting homomorphism $\grd$ is given by 
\begin{equation}\label{conhomo}
\grd(n)=\left(
\begin{matrix}1&0&k_2n\\
              0&1&0 \\
              0&0&1
\end{matrix}
\right).
\end{equation}
This gives the group isomorphism 
\begin{equation}\label{pi1centext}
\pi_1(\caln^3\star_{k_1,k_2} S^3)\approx\calh^3(\bbz)/k_2Z(\bbz)
\end{equation}
where $Z(\bbz)$ is the central subgroup of $\calh^3(\bbz)$ consisting of matrices of the form
$$\left(
\begin{matrix}1&0&c\\
              0&1&0 \\
              0&0&1
\end{matrix}
\right),\quad c\in \bbz,$$
which proves (1). Then Equation (\ref{pi1centext}) and the well known homomorphism theorems give the exact sequence
\begin{equation}\label{k2exact}
0\ra{1.8}\bbz_{k_2}\ra{1.8}\pi_1(M^5_{k_1,k_2})\ra{1.8}\bbz^2\ra{1.8}0.
\end{equation}
Moreover, from the structure of the Heisenberg group this is a non-Abelian central extension which proves item (2). The fact that $\pi_i(H)$ vanishes for $i\geq 1$ together with the fibration $H\ra{1.5}G\ra{1.5}G/H$ implies $\pi_i(\caln^3\star_{k_1,k_2} S^3)=\pi_i(G/H)\approx \pi_i(G)\approx\pi_i(S^3)$ for $i\geq 2$ which finishes the proof of the lemma.
\end{proof}

\begin{proof}[Proof of Theorem]
First consider the case $k_2=1$. We make use of a topological rigidity result of Kreck and L\"uck \cite{KrLu09}. Notice that the torus $T^2$ is the classifying space $B\bbz^2$. So we consider the classifying map $f:M^5_{k_1,1}\ra{1.5} B\bbz^2\approx T^2$ to be the composition $M^5_{k_1,1}\ra{1.5}T^2\times S^2\ra{1.5} T^2$. Now by Lemma \ref{pi1join} $M^5_{k_1,1}$ satisfies $\pi_1(M^5_{k_1,1})=\bbz^2$ and $\pi_2(M^5_{k_1,1})=0$, and hence, the hypothesis of Problem 0.16 of \cite{KrLu09} is satisfied, namely, that $\pi_1(M^5_{k_1,1})$ is non-trivial and isomorphic to the fundamental group of a manifold of dimension $\leq 2$, and $\pi_2(M^5_{k_1,1})=0$. Thus, by Theorem 0.18 of \cite{KrLu09} the oriented homeomorphism type, in fact since homeomorphism implies diffeomorphism in dimension five, the oriented diffeomorphism type of $M^5_{k_1,1}$ is determined completely by its second Stiefel-Whitney class $w_2(M^5_{k_1,1})$. More explicitly, $M^5_{k_1,1}$ is an $S^3$-bundle over $T^2$, and there are precisely two such bundles, the trivial one with $w_2(M^5_{k_1,1})=0$, and the non-trivial one with $w_2(M^5_{k_1,1})\neq 0$. 

There are two ways to determine which of the two bundles occurs. One can compute $w_2(M^5_{k_1,k_2})$ explicitly using the fact that it is the mod 2 reduction of the first Chern class $c_1(\cald_{k_1,k_2})$ \cite{BG05}, and the latter is calculated from the pullback of the first Chern class of the quotient $T^2\times S^2$ via transgression. Since we need the Chern class to distinguish contact structures, we give this computation in Lemma \ref{Chernclass} below where we see that it is always an even multiple of a generator of $H^2(M^5_{k_1,k_2},\bbz)/({\rm torsion})$.
Alternatively, Gorbacevi\v{c} \cite{Gor78} has classified 5-dimensional compact homogeneous manifolds. Since our $M^5_{k_1,1}$ is homogeneous and the trivial bundle $T^2\times S^3$ is also, whereas, the non-trivial bundle does not appear on Gorbacevi\v{c}'s list, $M^5_{k_1,1}$ must be the former.

For the general case we can apply Proposition 7.6.7 of \cite{BG05} which gives $M^5_{k_1,k_2}$ as a bundle over $T^2$ with fiber the lens space $L(k_2,1)$. Now (2) of Lemma \ref{pi1join} implies that that the lens  space bundle is non-trivial. Moreover, using Equation (\ref{pi1centext}), an easy computation shows that the commutator group $[\pi_1,\pi_1]$ is $\bbz_{k_2}$, so $H_1(M^5_{k_1,k_2},\bbz)\approx \bbz^2$.
\end{proof}

\begin{lemma}\label{Chernclass}
On the contact manifold $M^5_{k_1,k_2}$ we have $c_1(\cald_{k_1,k_2})=2k_1\grg$ where $\grg$ is a generator of $H^2(M^5_{k_1,k_2},\bbz)/({\rm torsion})$. 
\end{lemma}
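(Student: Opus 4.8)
The plan is to combine the standard description of the contact bundle of a (quasi-)regular Sasakian structure as a pullback from its transverse Kähler base with a Gysin-sequence computation of $H^2(M^5_{k_1,k_2},\bbz)$. Write $\pi:M^5_{k_1,k_2}\ra{1.2}\calz=\calz_1\times\calz_2=T^2\times S^2$ for the Boothby--Wang projection of the join, and let $a=[\gro_1]$ and $b=[\gro_2]$ denote the pullbacks to $\calz$ of the primitive Kähler classes on $\calz_1=T^2$ and $\calz_2=S^2$. These form an integral basis of $H^2(\calz,\bbz)\cong\bbz^2$ with $a^2=b^2=0$ and $a\cdot b=1$, and the Euler class of the circle bundle is $e=[\gro_{k_1,k_2}]=k_1a+k_2b$.

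First I would record the identity $c_1(\cald_{k_1,k_2})=\pi^*c_1(\calz)$. This is the fact quoted from \cite{BG05}: the relation $\pi_*\Phi X=\cJ\pi_*X$ for foliate $X$ exhibits $d\pi|_\cald:\cald\ra{1.2}\pi^*T^{1,0}\calz$ as a complex bundle isomorphism, so $\cald\cong\pi^*T^{1,0}\calz$ as complex vector bundles and their first Chern classes agree. Next I would compute $c_1(\calz)=2b$. For the product complex structure this is immediate from $K_\calz^{-1}=\mathrm{pr}_2^*\calo(2)$. For the twisted structures $J_{2m}$, realizing $\calz=\bbp(E)\ra{1.2}T^2$ as an even-degree ruled surface with $h=c_1(\calo_{\bbp(E)}(1))$ and fiber class $F$, the canonical bundle formula over an elliptic base (whose canonical class vanishes) gives $c_1(\calz)=2h-(\deg E)F=2h-2mF=2(h-mF)$; since $\deg E=2m$ is even, $b=h-mF$ is an integral isotropic class with $b\cdot F=1$ representing $[\gro_2]$, so once again $c_1(\calz)=2b$. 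I would emphasize that the $a=[\gro_1]$-component of $c_1(\calz)$ vanishes for every one of these complex structures, which is also what makes the answer independent of $m$ and consistent with the bouquet sitting over a single contact structure $\bar\cald_{k_1}$.

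Then I would run the Gysin sequence of $S^1\ra{1.2}M^5_{k_1,k_2}\ra{1.2}\calz$. Using the cup-product structure of $H^*(T^2\times S^2)$ one checks that $\cup e:H^1(\calz)\ra{1.2}H^3(\calz)$ is injective (it is multiplication by $k_2$ onto the relevant generators, since $a\cup H^1=0$) and that $\cup e:H^0(\calz)\ra{1.2}H^2(\calz)$ is injective. Hence $\pi^*$ is onto with kernel $\langle e\rangle$, giving $H^2(M^5_{k_1,k_2},\bbz)\cong H^2(\calz,\bbz)/\langle e\rangle=\bbz^2/\langle(k_1,k_2)\rangle$. Because $\gcd(k_1,k_2)=1$ the class $(k_1,k_2)$ is primitive, so this quotient is the free group $\bbz$ with no torsion; a generator $\gamma$ is given by the homomorphism $(m,n)\mapsto k_2m-k_1n$, under which $\pi^*a=k_2\gamma$ and $\pi^*b=-k_1\gamma$. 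Combining the three steps yields $c_1(\cald_{k_1,k_2})=\pi^*c_1(\calz)=2\pi^*b=-2k_1\gamma$, which after replacing $\gamma$ by the generator $-\gamma$ is the asserted $2k_1\gamma$.

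The step I expect to be the main obstacle is the vanishing of the $[\gro_1]$-component of $c_1(\calz)$ for the non-product structures $J_{2m}$, rather than merely for the product structure. This is exactly where the evenness of the degree $\deg E=2m$ is used: it guarantees that $h-mF$ is an integral class representing $[\gro_2]$ and that the fiber-dual term $2mF$ is absorbed into $2b$. Were there a residual odd multiple of $F=a$ in $c_1(\calz)$, the pullback $\pi^*a=k_2\gamma$ would contaminate the coefficient and spoil the clean value $2k_1$. By contrast, the Gysin bookkeeping is routine once the intersection form of $H^*(T^2\times S^2)$ and the primitivity of $(k_1,k_2)$ are in hand; in particular the computation shows $H^2(M^5_{k_1,k_2},\bbz)\cong\bbz$ is torsion-free, so $\gamma$ generates $H^2(M^5_{k_1,k_2},\bbz)/(\mathrm{torsion})$ as claimed.
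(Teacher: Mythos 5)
Your proof is correct and follows essentially the same route as the paper's: pull back $c_1(T^2\times S^2)=2\grb$ along the Boothby--Wang projection, use the vanishing of $\pi^*(k_1\gra+k_2\grb)$ together with $\gcd(k_1,k_2)=1$ to get $c_1(\cald_{k_1,k_2})=2\pi^*\grb=2k_1\grg$. The only difference is that you make explicit two facts the paper asserts without detail---that $c_1=2\grb$ holds for every ruled-surface complex structure $J_{2m}$ (via the canonical bundle formula over an elliptic base), and that $\pi^*$ is surjective so that $\grg$ really generates $H^2(M^5_{k_1,k_2},\bbz)/({\rm torsion})\approx\bbz$ (via the Gysin sequence, which in fact shows $H^2$ is torsion-free)---both welcome refinements rather than a different argument.
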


\begin{proof}
The join construction gives a circle bundle $\pi:M^5_{k_1,k_2}\ra{1.6}T^2\times S^2$. Choosing a basis $(\gra,\grb)$ for $H^2(T^2\times S^2,\bbz)\approx H^2(T^2,\bbz)\oplus H^2(S^2,\bbz)$, we have $c_1(T^2\times S^2)=2\grb$ and the Euler class of the circle bundle is $k_1\gra+k_2\grb$. The pullback of this class to $M^5_{k_1,k_2}$ vanishes, so $\pi^*\grb=-\frac{k_1}{k_2}\pi^*\gra$. But $\pi^*\grb$ is an integral class, so we must have $\pi^*\gra=-k_2\grg$ for an element $\grg\in H^2(M^5_{k_1,k_2},\bbz)/({\rm torsion})$. Thus,
$$c_1(\cald_{k_1,k_2})=\pi^* c_1(T^2\times S^2)=2\pi^*\grb=2k_1\grg.$$
Furthermore, $\grg$ is a generator of  $H^2(M^5_{k_1,k_2},\bbz)/({\rm torsion})\approx \bbz$ since $\gra$ and $\grb$ are generators of $H^2(T^2\times S^2,\bbz)$.

\end{proof}

\section{Complex surfaces diffeomorphic to $T^2 \times S^2$}\label{comsurf}

Let $(M,J)$ be a complex surface such that $M$ is diffeomorphic to \newline
$T^2 \times S^2$. Then it follows from Atiyah \cite{Ati55,Ati57} and Suwa \cite{Suw69} that $(M,J)$ is a ruled surface of genus $1$;  $(M,J) = {\mathbb P}(E) \rightarrow T^2$, where $T^2$ is equipped with a complex structure $J_\tau, \tau \in {\mathcal M}$ and, without loss of generality, $E \rightarrow T^2$ is a holomorphic rank $2$ vector bundle over the Riemann surface $(T^2,J_\tau)$ of one of the following types
\begin{enumerate}
\item $E$ is a non-split extension 
$$0 \rightarrow {\mathcal O} \rightarrow E \rightarrow  {\mathcal O} \rightarrow 0$$
\item $E= {\mathcal O} \oplus L$, where $L$ is a degree $0$ holomorphic line bundle on $T^2$ and ${\mathcal O}$ denotes the trivial (holomorphic) line bundle on $T^2$.
\item $E= {\mathcal O} \oplus L$, where $L$ is a  holomorphic line bundle on $T^2$ of positive even degree $n$.
\end{enumerate}
Assume the complex structure $J_\tau$ on $T^2$ is fixed. From \cite{Suw69} we have the following
statements, up to biholomorphism.
The  first type is unique  and we denote the ruled surface by $A_{0,\tau}$. 

The family of ruled surfaces of the second type is denoted by  ${\mathcal S}_{0,\tau}$ and is parametrized by ${\mathbb C}\bbp^1$.  It can be described as follows \cite{Fuj92}: let $\grr:\pi_1(T^2)=\bbz^2\ra{1.6} PSU(2)\approx SO(3)$ be a projective unitary representation. Consider the action of $\pi_1(T^2)$ on $\bbc\times \bbc\bbp^1$ given by the covering space action on the first factor and by $\grr$ on the second. We denote the quotient ruled surface by $T^2\times_\grr \bbc\bbp^1 \in   {\mathcal S}_{0,\tau}$. Note that in our case $\grr$ is a homomorphism from an Abelian group to $SO(3)$, so the image $\grr(\bbz^2)$ in $SO(3)$ is Abelian, and so is contained in a maximal Abelian subgroup of $SO(3)$. Since $T^2\times S^2$ is spin the homomorphism $\grr$ lifts to a homomorphism to $SU(2)$. But in $SU(2)$ any Abelian subgroup must be a subgroup of a circle\footnote{$SO(3)$ does have a maximal Abelian subgroup that is not a circle, namely, the Klein four group of diagonal elements. In this case the underlying manifold is the non-trivial $S^2$ bundle over $T^2$ which is not spin (cf. Exercise 6.14 of \cite{McDSa}), and the complex ruled surface is denoted by $A_1$ in \cite{Suw69}.}. It follows that $\grr(\bbz^2)$ is a subgroup (not necessarily closed) of a circle $S^1$. Generally, we shall denote this complex structure as $J_\grr$. The product complex structure corresponds to $\grr$ mapping $\bbz^2$ to the identity, that is, $\pi_1(T^2)=\ker~\grr$. Naturally this is the case where $L={\mathcal O}$ for type (2) above.  We denote the product structure by $S_{0,\tau}$. 

There is exactly one ruled surface of type (3) for each $n \in 2 {\mathbb Z}^+$ and fixed $\grt\in\calm$. We denote this by  $S_{n,\tau}$.

\subsection{Complex analytic families of complex structures}\label{complex structures}
In this paper we are interested in families of complex structures on $T^2\times S^2$ and therefore
we now summarize the main conclusions of Section 3  of \cite{Suw69}:
Let $(M,J)$ be a ruled surface as above of type (1), (2), or (3) and let $\Theta$ denote the sheaf over $(M,J)$ of germs of holomorphic vector fields. Then we have
$dimH^2((M,J),\Theta) = 0$ and 
\begin{enumerate}
\item if $(M,J) = A_{0,\tau}$, \newline 
then
$dimH^0((M,J),\Theta) =dimH^1((M,J),\Theta) = 2$,

\item  if $(M,J) = S_{0,\tau}$, \newline 
then $dimH^0((M,J),\Theta) =dimH^1((M,J),\Theta) = 4$,

\item  if $(M,J) \neq S_{0,\tau}$ and $(M,J) \in {\mathcal S}_{0,\tau}$, \newline 
then $dimH^0((M,J),\Theta) =dimH^1((M,J),\Theta) = 2$,

 \item  if $(M,J) = S_{n,\tau}$, \newline 
then $dimH^0((M,J),\Theta) =dimH^1((M,J),\Theta) = n+1$.
\end{enumerate}
From \cite{KNS58} and \cite{KoSp58b} it then follows that in each case there is a local complex analytic family ${\mathcal J}$ of complex structures on $T^2\times S^2$ such that $J \in {\mathcal J}$ and ${\mathcal J}$ is parametrized by a complex parameter space of dimension equal to $dimH^1((M,J),\Theta)$. In fact, Suwa explicitly constructs effectively parametrized and complete families at $J$.
One of the deformation directions  in each of the cases above corresponds to changing
 $J_\tau$ on the base $T^2$.  
In case (2), two of the deformation directions leads to $A_{0,\tau}$, while the last yields the complex analytic family  $ {\mathcal S}_{0,\tau}$. Case (3) is similar to case (2) without the two deformation directions leading to $A_0$. Finally, case (4) has two deformation directions jumping to $A_{0,\tau}$ and $n-2$ deformation directions jumping to $S_{n-2(k-1),\tau}$ for $k=3,...,n$, where $S_{-m,\tau} = S_{m,\tau}$. Unless, $n=2$, the latter yield, possibly with some double counting, the ruled surfaces $S_{n,\tau}, S_{n-2,\tau},...,S_{0,\tau}$ (the first one corresponding to no deformation).

Consider now the symplectic $2$-form 
$$\omega_{k_1,k_2} = k_1 \omega_1 + k_2 \omega_2$$
on  $T^2 \times S^2$, where $\omega_1$ and $\omega_2$ are the standard area measures on $T^2$ and $S^2$, respectively. Let $\alpha_{k_1,k_2} \in H^2(M,{\mathbb R})$ denote the cohomology class of $\omega_{k_1,k_2}$. 

\begin{lemma}\label{Kahclass}
For any $(M,J) \in  {\mathcal S}_{0,\tau} \cup \{ A_{0,\tau} \}$,
$\alpha_{k_1,k_2}$ is a K\"ahler class if and only if $k_1,k_2 >0$.
For any $(M,J) = S_{n,\tau}$, $n \in 2 {\mathbb Z}^+$, $\alpha_{k_1,k_2}$ is a K\"ahler class if and only if
$k_2>0$ and $\frac{k_1}{k_2} > n/2$.
\end{lemma}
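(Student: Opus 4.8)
The plan is to reduce the statement to a computation of intersection numbers on the surface, exploiting the fact that for a geometrically ruled surface the K\"ahler (equivalently ample) cone is cut out by positivity against just two curves: a fibre and the section of minimal self-intersection. Since every $(M,J)$ here is ruled of genus one, it has $p_g=0$ and $b_2=2$, so $H^{1,1}(M,\mathbb{R})=H^2(M,\mathbb{R})$ is two-dimensional; in particular every class is of type $(1,1)$ and the only issue is positivity. Working on the underlying smooth manifold $T^2\times S^2$, I fix the basis of $H_2(T^2\times S^2,\mathbb{Z})\cong H^2(T^2\times S^2,\mathbb{Z})$ given by $B=[T^2\times\{pt\}]$ and the fibre $F=[\{pt\}\times S^2]$, with hyperbolic intersection form $B^2=F^2=0$, $B\cdot F=1$. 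Since $\omega_1,\omega_2$ are taken to be primitive (so they represent integral generators of $H^2(T^2,\mathbb{Z})$ and $H^2(S^2,\mathbb{Z})$), evaluating $\int_B\omega_i$ and $\int_F\omega_i$ and dualizing gives $[\omega_1]=F$ and $[\omega_2]=B$, hence $\alpha_{k_1,k_2}=k_2\,B+k_1\,F$ in every case.

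Next I would identify, for each complex structure, the section $C_0$ of minimal self-intersection, i.e.\ the ruled-surface invariant $e=-C_0^2$. For the degree-zero bundles --- both the non-split Atiyah bundle $A_{0,\tau}$ and every member of $\mathcal{S}_{0,\tau}$ --- the rank-two bundle $E$ is semistable with $e=0$, so $C_0$ lies in the class $B$ with $C_0^2=0$. For $S_{n,\tau}=\mathbb{P}(\mathcal{O}\oplus L)$ with $\deg L=n>0$, the standard description gives a negative section with $C_0^2=-n$; since $T^2\times S^2$ is spin, any section class has even self-intersection, which forces $n$ even (consistent with the hypothesis $n\in 2\mathbb{Z}^+$) and pins down $[C_0]=B-\tfrac{n}{2}F$.

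Then I would invoke the K\"ahler cone of a geometrically ruled surface: by Nakai--Moishezon together with the fact that the cone of curves is generated by $F$ and $C_0$ (cf.\ \cite{Fuj92,ACGT08}), a class $\alpha$ is K\"ahler precisely when $\alpha^2>0$, $\alpha\cdot F>0$, and $\alpha\cdot C_0>0$; necessity is immediate since a K\"ahler class integrates positively over every complex curve and has positive self-intersection. In the degree-zero case $\alpha_{k_1,k_2}=k_2\,C_0+k_1\,F$, so $\alpha\cdot F=k_2$ and $\alpha\cdot C_0=k_1$, giving exactly $k_1,k_2>0$ (with $\alpha^2=2k_1k_2>0$ automatic). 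In the case $S_{n,\tau}$, rewriting $B=C_0+\tfrac{n}{2}F$ yields $\alpha_{k_1,k_2}=k_2\,C_0+\bigl(k_1+\tfrac{n}{2}k_2\bigr)F$, whence $\alpha\cdot F=k_2$, $\alpha\cdot C_0=k_1-\tfrac{n}{2}k_2$, and $\alpha^2=2k_1k_2$; the conditions thus become $k_2>0$ and $k_1/k_2>n/2$, the self-intersection positivity following for free.

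The main obstacle I expect is the second step: correctly pinning down the minimal section and its self-intersection for each complex structure --- in particular verifying $e=0$ for the indecomposable Atiyah bundle $A_{0,\tau}$ and for the twisted degree-zero families in $\mathcal{S}_{0,\tau}$, and confirming that $F$ and $C_0$ really do generate the cone of curves, so that testing positivity against only these two classes suffices in the Nakai--Moishezon criterion. The parity constraint coming from the spin condition on $T^2\times S^2$ is precisely the feature that ties the holomorphic degree $n$ to the clean bound $n/2$, via the integrality of $[C_0]=B-\tfrac{n}{2}F$.
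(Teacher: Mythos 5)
Your proposal is correct and follows essentially the same route as the paper: both compute ${\rm PD}(\alpha_{k_1,k_2}) = k_2 B + k_1 F$ in the section--fiber basis and then read off the two positivity conditions cutting out the K\"ahler cone of a genus-one ruled surface, with the change of basis $B = C_0 + \frac{n}{2}F$ (the paper's $E_0 = E_n - \frac{n}{2}C$) producing the bound $\frac{k_1}{k_2} > n/2$. The only difference is presentational: where you re-derive the cone description via Nakai--Moishezon and the cone of curves (valid here since all these surfaces have invariant $e \geq 0$ and $h^{1,1}=b_2=2$), the paper simply cites \cite{Fuj92} and Lemma 1 of \cite{To-Fr98} for the statement that the K\"ahler cone is $\{\, m_1 E_n + m_2 C \;|\; m_1>0,\ m_2>0 \,\}$.
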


\begin{proof}
In either case the zero  section, $E_{n}$ of $M \rightarrow 
T^2$ has the property that $E_{n}\cdot E_n = n$ where $n=0$ if $(M,J) \in 
 {\mathcal S}_{0,\tau}\cup \{ A_{0,\tau} \}$.
(If $(M,J) = S_{0,\tau}$, $E_0= T^2\times \{pt \}$.)
If $C$ denotes a fiber of the ruling $M \rightarrow 
T^2$, then $C\cdot C=0$, while $C \cdot E_{n} =1$.
Any real cohomology class in the two dimensional space 
$H^{2}(M, {\mathbb R})$ may be written as a linear combination of 
(the Poincare 
duals of) $E_{n}$ and $C$, 
$$
m_{1} E_{n} +m_{2} C\, .
$$
In particular, the K\"ahler cone ${\mathcal K}$ corresponds to $ m_{1} > 0, 
m_{2} > 0$ (see \cite{Fuj92} or Lemma 1 in \cite{To-Fr98}). By integrating 
$\alpha_{k_1,k_2}$ over $E_0$ and $C$ we easily get that
${\rm PD}(\alpha_{k_1,k_2}) = k_2 E_0 + k_1 C$ where PD means Poincar\'e dual, and since $E_0 = E_n - \frac{n}{2} C$ the lemma now follows. 
\end{proof}

As a consequence of this Lemma, if we start with a ruled surface $(M,J)$ diffeomorphic to $T^2 \times S^2$ such that $\alpha_{k_1,k_2}$ is a K\"ahler class, then  $\alpha_{k_1,k_2}$ remains a 
K\"ahler class for all the complex structures arising from the deformation families above.

\section{Existence of Extremal K\"ahler metrics}\label{exKmet}
{\em Extremal K\"ahler metrics} are generalizations of constant
scalar curvature K\"ahler metrics:
Let $(M,J)$ be a compact complex manifold admitting at least one
K\"ahler metric. For a
particular K\"ahler class $\alpha$, let $\alpha^+$ denote the 
set of all K\"ahler forms in $\alpha$.

Calabi \cite{Cal82} suggested that one should look for extrema of the 
following functional $\Phi$ on $\alpha^+$:
\[
\Phi : \alpha^+ \rightarrow {\mathbb R}
\]
\[
\Phi(\omega) = \int_M s^2 d\mu,
\]
where $s$ is the scalar curvature and $d\mu$ is the volume form of the
K\"ahler metric corresponding to the K\"ahler form $\omega$.
Thus $\Phi$ is the square of the $L^2$-norm of the scalar curvature.

\bigskip

\begin{proposition}\cite{Cal82}
The K\"ahler form $\omega \in \alpha^+$ is an
extremal point of $\Phi$ if and only
if the gradient vector field $grad \, s$ is a holomorphic real vector field, that is
$\pounds_{grad \, s}J=0$. When this happens the metric $g$ corresponding to $\omega$ is
called an {\em extremal K\"ahler metric}.
\end{proposition}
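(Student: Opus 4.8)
The statement is Calabi's characterization of extremal Kähler metrics as critical points of the Calabi functional $\Phi(\omega)=\int_M s^2\,d\mu$. The plan is to fix the Kähler class $\alpha$ and compute the first variation of $\Phi$ along a path of Kähler forms $\omega_t\in\alpha^+$, then identify the Euler--Lagrange equation with the condition that $\operatorname{grad}s$ be holomorphic.

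\textbf{First I would parametrize the variations.}
Since all forms in $\alpha^+$ are cohomologous, any smooth path through $\omega$ has the form $\omega_t=\omega+t\,i\partial\db\varphi+O(t^2)$ for a real-valued function $\varphi$ (the $\partial\db$-lemma guarantees every tangent vector to $\alpha^+$ is of this type). Writing $\dot\omega=i\partial\db\varphi$, I would record the standard first-variation formulas for the objects built from $\omega$: the variation of the volume form is $\dot{(d\mu)}=(\Delta\varphi)\,d\mu$ (up to the conventional factor fixing the normalization of the Laplacian), and the variation of the scalar curvature is $\dot s = -\Delta^2\varphi - \langle \rho,\, i\partial\db\varphi\rangle$ plus lower-order terms, where $\rho$ is the Ricci form. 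These are the routine computations I would not grind through; they are the linearizations of the determinant and of the Ricci curvature in Kähler geometry.

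\textbf{Then I would assemble the first variation and integrate by parts.}
Differentiating $\Phi(\omega_t)=\int_M s_t^2\,d\mu_t$ at $t=0$ gives
\[
\dot\Phi = \int_M \bigl(2 s\,\dot s + s^2\,\Delta\varphi\bigr)\,d\mu .
\]
The goal is to rewrite $\dot\Phi$ as $\int_M \varphi\,\calp(s)\,d\mu$ for some self-adjoint operator $\calp$ applied to $s$; since $\varphi$ ranges over all smooth real functions (up to constants, which drop out because $\int_M \Delta\varphi\,d\mu=0$), vanishing of $\dot\Phi$ for all $\varphi$ is equivalent to $\calp(s)=0$. Repeated integration by parts, using that $\Delta$ and $\Delta^2$ are formally self-adjoint, moves all derivatives off $\varphi$ and onto $s$. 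The key algebraic identity here is the Kähler--Bochner-type formula showing that the resulting fourth-order operator on $s$ factors through the complex Hessian: one finds $\dot\Phi = -2\int_M \varphi\,(\db^\ast\db\,\partial^\#\!s, \cdots)$, more precisely that the Euler--Lagrange operator equals (a nonzero constant times) $\db(\operatorname{grad}^{1,0}s)^\ast\db(\operatorname{grad}^{1,0}s)$ paired against $\varphi$, so that $\dot\Phi=0$ for all $\varphi$ forces $\db\,\operatorname{grad}^{1,0}s=0$.

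\textbf{The main obstacle} is precisely this last factorization: showing that the fourth-order Euler--Lagrange operator, which a priori is some combination of $\Delta^2 s$ and curvature-contracted terms, is exactly the squared $L^2$-norm of $\db\operatorname{grad}^{1,0}s$. This is where the Kähler identities and the commutation of $\Delta$ with the Hodge operators enter essentially, and it is what converts a fourth-order scalar PDE into the geometric condition $\pounds_{\operatorname{grad}s}J=0$. Concretely, I would invoke the identity that for a Kähler metric the $(0,1)$-part of $\db(\operatorname{grad}^{1,0}s)$ vanishes iff $\operatorname{grad}^{1,0}s$ is a holomorphic vector field, and that $\operatorname{grad}s$ (the real gradient) is real-holomorphic, i.e. $\pounds_{\operatorname{grad}s}J=0$, iff its $(1,0)$-part is holomorphic. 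Once the variation is shown to be $\dot\Phi = c\int_M \varphi\,(\db\operatorname{grad}^{1,0}s)^\ast(\db\operatorname{grad}^{1,0}s)$-type expression with $c\neq0$, the equivalence is immediate: criticality $\Leftrightarrow \db\operatorname{grad}^{1,0}s=0 \Leftrightarrow \pounds_{\operatorname{grad}s}J=0$, which is the desired characterization.
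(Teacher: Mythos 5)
There is nothing internal to compare against here: the paper does not prove this proposition but quotes it directly from Calabi \cite{Cal82}, so the only benchmark is Calabi's original argument --- and your outline is, in structure, exactly that argument. Your plan is correct: parametrize variations by $\omega_t=\omega+t\,i\partial\overline{\partial}\varphi$ via the $\partial\overline{\partial}$-lemma, linearize $s$ and $d\mu$, integrate by parts, and identify the Euler--Lagrange operator with the Lichnerowicz-type operator built from $\mathcal{D}=\overline{\partial}\circ\operatorname{grad}^{1,0}$; indeed the first-order terms cancel (the linearization $\dot s=-\mathcal{D}^{*}\mathcal{D}\varphi+\tfrac12\langle ds,d\varphi\rangle$ combines with the volume variation so that the gradient terms drop out), leaving $\dot\Phi=-2\int_M\varphi\,\mathcal{D}^{*}\mathcal{D}s\,d\mu$ up to convention-dependent constants. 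One step you state too quickly: vanishing of $\dot\Phi$ for all $\varphi$ yields the \emph{fourth-order} equation $\mathcal{D}^{*}\mathcal{D}s=0$; it does not directly ``force'' $\overline{\partial}\,\operatorname{grad}^{1,0}s=0$. You must then pair this equation with $s$ itself (equivalently, take $\varphi=s$ in the variation formula) to get $\int_M|\mathcal{D}s|^2\,d\mu=0$, whence $\mathcal{D}s=0$; this self-adjointness trick is the bridge from the PDE to the geometric condition. With that supplied, and the standard fact that $\operatorname{grad}^{1,0}s$ holomorphic is equivalent to $\pounds_{\operatorname{grad}\,s}J=0$, your sketch is a faithful reconstruction of the cited proof.
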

Notice that if $\pounds_{grad \, s}J=0$, then $Jgrad\, s$ is a Hamiltonian Killing vector field inducing Hamiltonian isometries.

As follows from Fujiki \cite{Fuj},
the complex surface $A_{0,\tau}$ does not admit any extremal metric at all\footnote{Notice that in the first paragraph of the proof of Theorem 4.6 in  \cite{ACGT08b}, it is inadvertently and incorrectly implied that $E$ of $A_{0,\tau}$ is polystable (and hence $A_{0,\grt}$ should admit a CSC K\"ahler metric). This is obviously not true. In fact, what should have been said is that \underline{the other} of the the two possible cases of $ {\mathbb P}(E) \rightarrow T^2$ with $E$ indecomposable has $E$ polystable. That case has also $E$ non-spin and so the bracket comment in Theorem 4.6 of \cite{ACGT08b} is not true and should be ignored.}. 

\begin{lemma}\label{A0lem}
The complex surface $T^2\times S^2$ with a non-split complex structure $A_{0,\grt}$ has no non-trivial Hamiltonian Killing vector fields and no extremal K\"ahler metrics.
\end{lemma}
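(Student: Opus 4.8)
The plan is to prove both assertions at once by first establishing that the connected automorphism group of $A_{0,\tau}$ contains no nontrivial Hamiltonian (i.e.\ compact) part, and then invoking the cited result of Fujiki for the nonexistence of extremal metrics. The key structural fact I would exploit is the cohomological data recorded just above the statement: for $(M,J)=A_{0,\tau}$ one has $\dim H^0((M,J),\Theta)=2$, where $\Theta$ is the sheaf of germs of holomorphic vector fields. Thus the space of global holomorphic vector fields is two complex-dimensional, giving a two-(complex-)dimensional Lie algebra of infinitesimal automorphisms. The strategy is to identify this algebra explicitly and show that none of its nonzero real elements can arise as $J\,\mathrm{grad}\,s$ for a Kähler metric, equivalently that the group of holomorphic automorphisms carries no nontrivial compact (hence Hamiltonian Killing) subgroup.

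First I would describe the holomorphic vector fields concretely from the non-split extension $0\to\calo\to E\to\calo\to 0$ over $(T^2,J_\tau)$. The fiberwise automorphisms of such a non-split extension are unipotent: they act on $\bbp(E)=\bbp^1$-fibers by a single parabolic (translation-type) one-parameter group fixing exactly one point of each fiber (the sub-line-bundle $\calo$). I would combine this with the holomorphic vector field coming from the base torus translations. The upshot I expect is that the two-dimensional algebra $H^0((M,J),\Theta)$ consists of a base-translation field together with a fiberwise \emph{nilpotent} (parabolic) field, and crucially that its real one-parameter subgroups generate only $\bbr$ or $\bbc$ actions---never a compact circle of fiber rotations, because a parabolic $\mathbb{C}$-action on $\bbp^1$ has no nontrivial compact subgroup. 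Consequently the identity component of the automorphism group contains no nontrivial torus, so $A_{0,\tau}$ admits no nontrivial Hamiltonian Killing vector field. This proves the first claim.

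For the second claim I would appeal to Calabi's criterion as recalled in the Proposition above: at an extremal Kähler metric, $\mathrm{grad}\,s$ is a real holomorphic vector field, and $J\,\mathrm{grad}\,s$ is then a Hamiltonian Killing field. If $A_{0,\tau}$ admitted an extremal metric that were not of constant scalar curvature (CSC), this Killing field would be nontrivial, contradicting the first part; so any extremal metric must be CSC. It therefore remains to rule out CSC Kähler metrics, and here I would invoke Fujiki's result (cited as \cite{Fuj} immediately before the Lemma), which asserts precisely that $A_{0,\tau}$ admits no extremal---in particular no CSC---Kähler metric. The footnote in the excerpt, correcting the stability misstatement in \cite{ACGT08b}, is exactly the point: the bundle $E$ in the non-split case is \emph{not} polystable, so the CSC obstruction (e.g.\ via the standard Lichnerowicz/Matsushima-type or stability obstruction for ruled surfaces) applies.

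The main obstacle I anticipate is making the first step fully rigorous: computing that $H^0((M,J),\Theta)$ consists entirely of fields whose real flows are non-compact, rather than merely citing the dimension count. The delicate point is that a dimension-two algebra could a priori contain an elliptic (rotational) direction; I must show the fiberwise part is genuinely parabolic/nilpotent, which is where the \emph{non-split} nature of the extension does the essential work, since a split bundle $\calo\oplus\calo$ would instead produce a $\mathbb{C}^*$-action with a compact $S^1$. Once the nilpotency of the fiberwise automorphism group is secured, both conclusions follow cleanly, with the nonexistence of extremal metrics reducing to the already-cited Fujiki theorem.
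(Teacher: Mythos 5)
Your proposal has a genuine gap at its first and crucial step. The claim that the identity component of the automorphism group of $A_{0,\grt}$ contains no nontrivial compact subgroup is false. Since the non-split bundle $E$ is indecomposable of degree zero it is flat (Weil's criterion), and $A_{0,\grt}$ is the flat $\bbc\bbp^1$-bundle associated to a parabolic representation $\bbz^2\to PGL(2,\bbc)$, $\grl_i\mapsto(\grz\mapsto\grz+c_i)$, where $(\grl_1,c_1)$ and $(\grl_2,c_2)$ are linearly independent over $\bbc$ (this independence is precisely non-splitness). The vector fields $\grl_i\d_w+c_i\d_\grz$ on $\bbc\times\bbc\bbp^1$ are invariant under the deck group and descend to $A_{0,\grt}$, and their time-one flows are exactly the deck transformations, i.e.\ the identity; hence each generates a holomorphic \emph{circle} action, and in fact $\mathrm{Aut}_0(A_{0,\grt})\cong\bbc^2/\bbz^2\cong\bbc^*\times\bbc^*$ contains a compact $2$-torus. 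What is true is that none of these circles is Hamiltonian: their generators have nonvanishing base component, hence no zeros, whereas a Hamiltonian vector field on a compact manifold must vanish somewhere. This is the step your argument omits and where the paper's proof begins: for a Hamiltonian Killing field $X$, compactness forces a zero, so $\pi_*X$ is a holomorphic field on $T^2$ with a zero, hence $\pi_*X=0$; only for such fiber-preserving fields does your (correct) observation about the unipotent fiberwise automorphism group apply, which the paper implements by noting that a fiber-preserving circle action would split $E$ (Lemma 1 of \cite{ACGT11}), contradicting non-splitness.

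The second part is also problematic: after reducing (correctly, and exactly as the paper does) to the constant scalar curvature case, you invoke Fujiki's theorem that $A_{0,\grt}$ admits no extremal metric at all --- but that is literally the statement being proved, so the citation is circular. The paper instead supplies an actual obstruction: by the Lichn\'erowicz--Matsushima theorem a CSC K\"ahler metric would force the algebra of holomorphic vector fields to be reductive in the required sense (the complexification of a compact algebra of Killing fields), and by Maruyama's computation \cite{Mar71} this fails for $A_{0,\grt}$; concretely, the subgroup generated by holomorphic fields with zeros is the one-parameter unipotent group of fiberwise translations, which is not the complexification of any compact group. Your alternative gesture toward the stability obstruction ($E$ not polystable, hence no CSC metric, as in \cite{BuBa88}) could also close this step, but as written it is only a gesture, not an argument.
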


\begin{proof}
To see that $A_{0,\tau}$ admits no non-trivial Hamiltonian Killing vector fields with respect to any K\"ahler metric,
assume that we did have such a vector field $X$. Then, since $A_{0,\tau}$ is compact, $X$ would have to vanish  somewhere and thus if $\pi$ denotes the projection of $A_{0,\tau}$ to $T^2$, $\pi_* X$ would be a holomorphic vector field on $T^2$ with a zero. It is well known that holomorphic vector fields on $T^2$ either vanish everywhere or nowhere. Thus $\pi_* X =0$. This means that $X$ would induce a group of fiber preserving automorphisms of  $A_{0,\tau}$. By compactness of $A_{0,\tau}$ any Hamiltonian Killing vector field $X\neq0$ induces a group of automorphisms whose closure is  $S^1$ or $T^2$. By the fiber preservation, the latter possibility is clearly not possible and a fiber preserving $S^1$ action would cause $E$ to split into two holomorphic line bundles (cf. \cite{ACGT11} Lemma 1). So there are no non-trivial Hamiltonian Killing vector fields.

The lack of  non-trivial Hamiltonian Killing vector fields implies that any extremal K\"ahler metric would have to have constant scalar curvature. However, by the well known Lichn\'erowicz-Matsushima Theorem (cf. \cite{Gau09b}) the Lie algebra of holomorphic vector fields on $(M,J)$ must be a reductive complex Lie algebra. For $(M,J)=  A_{0,\tau}$ this is not the case \cite{Mar71} and so the manifold admits no extremal K\"ahler metrics.
\end{proof}

 If $(M,J) \in {\mathcal S}_{0,\tau}$, then there is a constant scalar curvature (CSC) K\"ahler metric in each K\"ahler class of the K\"ahler cone on $T^2\times_\grr \bbc\bbp^1$. These are called {\it quasi-stable} in \cite{Fuj92}. When $\grr$ is the identity  $(M,J) = T^2 \times {\mathbb C}\bbp^1$ is simply a product of constant curvature K\"ahler metrics on $T^2$ and ${\mathbb C}\bbp^1$ respectively. In general,  $T^2\times_\grr \bbc\bbp^1$ is a flat ${\mathbb C}P^1$-bundle and so the local products of CSC K\"ahler metrics inherited from product CSC K\"ahler metrics on the universal cover $\bbc\times \bbc\bbp^1$ exhaust the K\"ahler cone.

If $(M,J) = S_{n,\tau}$, $n \in 2 {\mathbb Z}^+$, then there is an extremal K\"ahler metric (non-CSC) in every K\"ahler class \cite{Hwa94} (see also \cite{To-Fr02}) arising from a Calabi type construction:
Recall that $M = {\mathbb P}({\mathcal O} \oplus L) \rightarrow T^2$,
where $L$ is a  
holomorphic line bundle of degree $n\in 2{\mathbb Z}^+$ on $T^2$, and 
${\mathcal O}$ is the trivial holomorphic line bundle.
Let $g_{T^2}$ be the
K\"ahler metric
on $T^2$ of constant zero scalar curvature, with K\"ahler form
$\omega_{T^2}$, such that
$c_{1}(L) = [\frac{\omega_{T^2}}{2 \pi}]$.

The natural $\mathbb{C}^*$-action on $L$ 
extends to a holomorphic
$\mathbb{C}^*$-action on $M$. The open and dense set $M_0$ of stable points with 
respect to the
latter action has the structure of a principal $\mathbb{C}^*$-bundle over the stable 
quotient.
The hermitian norm on the fibers induces via a Legendre transform a function
${\gz}:M_0\rightarrow (-1,1)$ whose extension to $M$ consists of the critical manifolds
$E_{n}:={\gz}^{-1}(1)=P({\mathcal O} \oplus 0)$ and 
$E_{\infty}:= {\gz}^{-1}(-1)=P(0 \oplus L)$.
To build the so-called admissible metrics \cite{ACGT08} on $M$ we 
proceed as follows. Let  $\theta$ be a connection one form for the 
Hermitian metric on $M_0$, with curvature
$d\theta = \omega_{T^2}$. Let $\Theta$ be a smooth real function with 
domain containing
$(-1,1)$. Let $r$ be a real number such that $0 < r < 1$.
Then an admissible K\"ahler metric
is given on $M_0$ by
\begin{equation}\label{metric}
g  =  \frac{1+r {\gz}}{r} g_{T^2}
+\frac {d{\gz}^2}
{\Theta ({\gz})}+\Theta ({\gz})\theta^2\,
\end{equation}
with K\"ahler form 
\begin{equation}
\omega =  \frac{1+r{\gz}}{r}\omega_{T^2}
+d{\gz}\wedge \theta\,. \label{kf}
\end{equation}
The complex structure yielding this
K\"ahler structure is given by the pullback of the base complex structure
along with the requirement 
\begin{equation}\label{complex}
Jd{\gz} = \Theta \theta.
\end{equation}
The function ${\gz}$ is 
Hamiltonian
with $K= J grad_g{\gz}$ a Killing vector field. Observe that $K$ 
generates the circle action which induces the holomorphic
$\mathbb{C}^*$- action on $M$ as introduced above.
In fact, ${\gz}$ is the moment 
map on $M$ for the circle action, decomposing $M$ into 
the free orbits $M_{0} = {\gz}^{-1}((-1,1))$ and the special orbits 
${\gz}^{-1}(\pm 1)$. Finally, $\theta$ satisfies
$\theta(K)=1$.
In order that $g$ (be a genuine metric and) extend to all of $M$,
$\Theta$ must satisfy the positivity and boundary
conditions
\begin{align}
\label{positivity}
(i)\ \Theta({\gz}) > 0, \quad -1 < {\gz} <1,\quad
(ii)\ \Theta(\pm 1) = 0,\quad
(iii)\ \Theta'(\pm 1) = \mp 2.
\end{align}
The last two of these are together necessary and sufficient for
the smooth compactification of $g$. 

Note that in the above set-up different choices of $\Theta$  determines different compatible complex structures
$J$ with the same fixed symplectic form $\omega$ as the K\"ahler form. However, for each 
$\Theta$ there is an $S^1$-equivariant diffeomorphism pulling back $J$ to the original fixed complex structure of  $S_{n,\tau}$ in such a way that the K\"ahler form of the new K\"ahler metric is in the same cohomology class as
$\omega$. Therefore, with all else fixed, we may view the set of the functions $\Theta$ satisfying \eqref{positivity} as parametrizing a family of K\"ahler metrics within the same K\"ahler class of 
$S_{n,\tau}$ \cite{ACGT08}.

It is easy to see that the K\"ahler class of a metric as in \eqref{metric} is given by
\[
{\rm PD}([\omega]) = 4\pi E_{n}+ \frac{2\pi(1-r)n}{r} C.
\]
From the proof of Lemma \ref{Kahclass}, we see that, up to rescaling, the set $\{0<r<1\}$ exhausts the entire K\"ahler cone.
Finally, one may check by direct calculation \cite{ACGT08} that $g$ as in \eqref{metric} is extremal if and only if
\[
\Theta(\gz) = \frac{(1-\gz^{2})(2r^{2}\gz^{2} + r (6-2r^{2})\gz + (6 - 4r^{2}))}{(1+r \gz)2(3-r^{2})}.
\]
For any choice of $0<r<1$ this is a function satisfying all conditions in \eqref{positivity} and thus any K\"ahler class admits an extremal K\"ahler metric. None of these extremal K\"ahler metrics have constant scalar curvature.

\subsection{Families of complex structures with extremal metrics in  $\alpha_{k_1,k_2}$ } 

Now, if we start with a ruled surface $(M,J)$ diffeomorphic to $T^2 \times S^2$ such that $\alpha_{k_1,k_2}$ is a K\"ahler class admitting an extremal K\"ahler metric, then we know that either
$(M,J) \in {\mathcal S}_{0,\tau}$, and then the extremal metric is a CSC K\"ahler metric, or 
$(M,J) = S_{n,\tau}, n \in 2{\mathbb Z}^+$. From Section \ref{complex structures} together with the above observations we see that in the first case we have a two dimensional complex parameter family of complex structures such that $\alpha_{k_1,k_2}$ remains a K\"ahler class admitting a CSC K\"ahler metric. The second case contains two subcases; if $n=2$ we have a one dimensional complex parameter family (corresponding to changing the complex structure on the base) of complex structures such that $\alpha_{k_1,k_2}$ remains a K\"ahler class admitting an
extremal K\"ahler metric (all non-CSC), if $n=4,6,...,$ we have an $(n-1)$-dimensional complex family of complex structures such that $\alpha_{k_1,k_2}$ remains a K\"ahler class admitting an
extremal K\"ahler metric. A one dimensional sub-parameter family contains complex structures (all biholomorphic to $S_{0,\tau}$) admitting  $CSC$ K\"ahler metrics whereas the rest are non-CSC.

\begin{remark}
Suppose $\alpha_{k_1,k_2}$ is a K\"ahler class for a given complex structure $J$ on $T^2 \times S^2$. According to McDuff \cite{McD94}, up to isotopy, there is only one symplectic form in the class $\alpha_{k_1,k_2}$. 
In particular, if $\alpha_{k_1,k_2}$ admits some extremal K\"ahler metric $g$ w.r.t. $J$ 
 with K\"ahler form $\omega$, then there exists a diffeomorphism $\phi$ such that $\phi^*\omega = \omega_{k_1,k_2}$. Then, $\phi^*J$ is a complex structure compatible with $\omega_{k_1,k_2}$ such that $\omega_{k_1,k_2}$ is the K\"ahler form of an extremal K\"ahler metric.
 \end{remark}

\section{Hamiltonian Circle Actions on $T^2\times S^2$}\label{hamS1}

The purpose of this section is to show that the Hamiltonian circle actions corresponding to the complex structures $S_{2m,\grt}$ (including $m=0$) discussed in Sections \ref{comsurf} and \ref{exKmet} belong to distinct conjugacy classes of maximal tori in the group $\gH\ga\gm(T^2\times S^2,\gro_{k_1,k_2})$ for $m=0,\cdots, \lceil\frac{k_1}{k_2}\rceil-1$.

Fix a symplectic form $\gro_{k_1,k_2}=k_1\gro_1+k_2\gro_2$ on $T^2\times S^2$ with $k_1,k_2\in\bbz^+$ relatively prime, and let $\gS\gy\gm(T^2\times S^2,\gro_{k_1,k_2})$ denote its group of symplectomorphisms. It is a Fr\'echet Lie group locally modelled on its Lie algebra 
$$\gs\gy\gm(T^2\times S^2,\gro_{k_1,k_2})=\{X\in\gX(T^2\times S^2)~|~\pounds_X\gro_{k_1,k_2}=0\},$$ 
where $\gX(M)$ denotes the Lie algebra of smooth vector fields on $M$. We are interested in the ideal of Hamiltonian Killing vector fields $\gh\ga\gm(T^2\times S^2,\gro_{k_1,k_2})$ of $\gs\gy\gm(T^2\times S^2,\gro_{k_1,k_2})$ consisting of those vector fields $X$ such that the 1-form $X\hook \gro_{k_1,k_2}$ is exact. The normal subgroup of {\it Hamiltonian isotopies} $\gH\ga\gm(T^2\times S^2,\gro_{k_1,k_2})$ is defined to be the subgroup of  $\gS\gy\gm(T^2\times S^2,\gro_{k_1,k_2})$ generated by smooth families of Hamiltonian Killing vector fields connected to the identity.

Consider the symplectic 4-manifolds $(T^2\times S^2,\gro_{k_1,k_2})$ together with the diffeomorphisms $\varphi_{2m}:T^2\times S^2\ra{1.6} \bbp(\calo\oplus L)$ where $L$ is a line bundle on $T^2$ of degree $2m$. Transport the complex structure on $ \bbp(\calo\oplus L)$ to $T^2\times S^2$ via $\varphi_{2m}$. Let $J_{2m}$ denote this complex structure on $T^2\times S^2$. It is compatible with the symplectic form, and it follows from Lemma \ref{Kahclass} that $(T^2\times S^2,\gro_{k_1,k_2},J_{2m})$ with $m\in \bbz^+$ is K\"ahler if and only if $k_1>mk_2$.

Hamiltonian $S^1$ actions on 4-manifolds were first studied independently by Ahara-Hattori \cite{AhHa91} and Audin \cite{Aud90}. Later Karshon \cite{Kar99} classified the Hamiltonian circle actions on 4-manifolds in terms of certain labelled  graphs. These graphs are determined by the fixed point set of the $S^1$ action. See also Chapter VIII of \cite{Aud04}. 

We write a point of the total space $W$ of the projective bundle $\pi:\bbp(\calo\oplus L)\ra{1.6} T^2$ as  $(w,[u,v])$ where $[u,v]$ are homogeneous coordinates in the $\bbc\bbp^1$ fiber $\bbp(\calo\oplus L_w)$ at $w\in T^2$. Define the circle action on $W$ by $\tilde{\cala}(\grl):W\ra{1.6} W$ by $\tilde{\cala}(\grl)(w,[u,v])=(w,[u,\grl v])$ where $\grl\in \bbc$ with $|\grl|=1$. This action is clearly holomorphic. Let $\cala_{2m}(\grl) =\varphi^{-1}_{2m}\circ\tilde{\cala}(\grl)\circ\varphi_{2m}$ denote the transported action on $T^2\times S^2$. It is holomorphic with respect to $J_{2m}$. The fixed point set of the action $\tilde{\cala}(\grl)$ is the disjoint union of sections $E_\infty=\gz^{-1}(-1)=(w,[0,v])$ and $E_n=\gz^{-1}(1)=(w,[1,0])$. Then we have \cite{Aud90}

\begin{lemma}\label{fixHam}
For each $n\in 2\bbz_{\geq 0}$ satisfying $n<\frac{2k_1}{k_2}$ and $\grl\in S^1$ we have
$\cala_{n}(\grl)\in \gH\ga\gm(T^2\times S^2,\gro_{k_1,k_2}) $.
\end{lemma}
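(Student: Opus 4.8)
The plan is to establish that the circle action $\cala_{n}(\grl)$ is Hamiltonian by producing an explicit moment map, since membership in $\gH\ga\gm(T^2\times S^2,\gro_{k_1,k_2})$ requires that the generating vector field $K$ have $K\hook\gro_{k_1,k_2}$ exact, and that the action be connected to the identity (which is automatic for a circle action, being a smooth family starting at the identity). First I would transport the entire problem to the model $W=\bbp(\calo\oplus L)$ via $\varphi_{2m}$, where the admissible-metric machinery of Section \ref{exKmet} is already set up: there the action $\tilde{\cala}(\grl)$ is generated by $K=J\,grad_g\,\gz$, and the function $\gz:W\to[-1,1]$ of Equation \eqref{complex} is precisely the moment map, with $\gz\hook\gro = -d\gz$ up to sign conventions. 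The key observation is that $K\hook\gro = d(\text{const}\cdot\gz)$, so the one-form $K\hook\gro$ is exact whenever $\gro$ lies in a K\"ahler class for $J_{2m}$.

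The crucial step is therefore to check that $\gro_{k_1,k_2}$ actually \emph{is} a K\"ahler form (or at least cohomologous to one) for the complex structure $J_{n}$, since the admissible construction and the resulting moment map $\gz$ are only available when the underlying K\"ahler condition holds. This is exactly where Lemma \ref{Kahclass} enters: for $(M,J)=S_{n,\grt}$ the class $\alpha_{k_1,k_2}$ is K\"ahler if and only if $k_2>0$ and $\frac{k_1}{k_2}>n/2$, which rearranges to the stated hypothesis $n<\frac{2k_1}{k_2}$. Thus the numerical condition $n<\frac{2k_1}{k_2}$ is precisely what guarantees the existence of the admissible K\"ahler form in the class $\alpha_{k_1,k_2}$, and hence of the Hamiltonian function $\gz$. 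I would then invoke the Remark following the extremal-metric discussion (McDuff's uniqueness of symplectic forms in $\alpha_{k_1,k_2}$ up to isotopy) to pull the admissible K\"ahler form back to the fixed $\gro_{k_1,k_2}$ by a diffeomorphism $\phi$; conjugating the circle action by $\phi$ keeps it Hamiltonian with respect to $\gro_{k_1,k_2}$, and being Hamiltonian is invariant under such symplectomorphisms.

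I expect the main obstacle to be the bookkeeping of boundary cases and normalizations rather than any deep difficulty. Specifically, when $n=0$ one must separately confirm that the product-type structures in $\cals_{0,\grt}$ still carry the fiberwise rotation as a Hamiltonian action (the moment map being the height function on the $S^2=\bbc\bbp^1$ factor), and one must be careful that the strict inequality $n<\frac{2k_1}{k_2}$ rather than $\leq$ is what is needed so that $\alpha_{k_1,k_2}$ lies in the open K\"ahler cone, keeping $\gz$ a genuine smooth Hamiltonian with nonempty open free orbit $M_0=\gz^{-1}((-1,1))$. A secondary point to verify is that the moment map condition $X\hook\gro_{k_1,k_2}$ \emph{exact} (not merely closed) holds: since $H^1(T^2\times S^2,\bbr)\neq 0$, one cannot take exactness for granted from closedness alone, but the explicit formula $K\hook\gro=-d\gz$ supplies the primitive directly, so this is resolved by the construction rather than by a cohomological argument. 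Once the K\"ahler condition from Lemma \ref{Kahclass} is in hand, the rest assembles routinely.
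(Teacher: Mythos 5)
Your proof is correct, but note that the paper itself supplies no argument here: Lemma \ref{fixHam} is stated with a bare citation to Audin \cite{Aud90}, whose study of periodic Hamiltonians on symplectic $4$-manifolds covers these fiberwise rotations. What you have done is assemble a self-contained proof out of ingredients the paper develops elsewhere: the admissible-metric moment map $\gz$ of Section \ref{exKmet} (where $K\hook\omega=-d\gz$ exhibits the primitive explicitly, correctly sidestepping the point that $H^1(T^2\times S^2,\bbr)\neq 0$ makes closedness of $K\hook\gro_{k_1,k_2}$ insufficient), the identification of the numerical hypothesis $n<\frac{2k_1}{k_2}$ with the K\"ahler-cone condition of Lemma \ref{Kahclass}, and McDuff's uniqueness \cite{McD94} to pass from the admissible K\"ahler form to the fixed $\gro_{k_1,k_2}$. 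This buys a transparent, quantitative explanation of \emph{why} the bound $n<\frac{2k_1}{k_2}$ appears, which the citation leaves implicit. Two points deserve tightening. First, the McDuff step produces \emph{some} conjugate of $\tilde{\cala}$ that is Hamiltonian for $\gro_{k_1,k_2}$; to conclude for $\cala_n$ itself you must read the paper's $\varphi_{2m}$ as a diffeomorphism pulling the (invariant) admissible K\"ahler form back to $\gro_{k_1,k_2}$ exactly --- which is how the Remark at the end of Section \ref{exKmet} normalizes things --- since mere compatibility of $J_{2m}$ with $\gro_{k_1,k_2}$ does not by itself guarantee that the transported action preserves $\gro_{k_1,k_2}$. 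Second, your separate treatment of $n=0$ is genuinely necessary, not mere bookkeeping: the admissible construction of Section \ref{exKmet} requires $c_1(L)=[\frac{\omega_{T^2}}{2\pi}]$ with $n\in 2\bbz^+$ and degenerates for a degree-zero bundle, so the product/height-function moment map you invoke (with Hamiltonian $k_2$ times the height on the $S^2$ factor, which also works for the flat bundles $T^2\times_\grr\bbc\bbp^1$) is the right substitute there.
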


Thus, for each $n=2m\in 2\bbz^+$ satisfying $m<\frac{k_1}{k_2}$ we have a monomorphism   
$$\cala_n:S^1\ra{1.6} \gH\ga\gm(T^2\times S^2,\gro_{k_1,k_2})\subset \gS\gy\gm_0(T^2\times S^2,\gro_{k_1,k_2}),$$
where the subscript $0$ on a group denotes its connected component. We simplify our notation following \cite{McD01,Bus10} to some extent and define
\begin{equation}\label{sympgrp}
G_{k_1,k_2}=\gS\gy\gm(T^2\times S^2,\gro_{k_1,k_2})\cap \gD\gi\gf\gf_0(T^2\times S^2).
\end{equation}
Clearly, $\gS\gy\gm_0(T^2\times S^2,\gro_{k_1,k_2})\subset G_{k_1,k_2}$. We claim that the circle actions $\cala_n$ belong to different conjugacy classes of maximal tori in $\gH\ga\gm(T^2\times S^2,\gro_{k_1,k_2})$ where conjugacy is taken under the larger group $G_{k_1,k_2}$. In order to see this we employ the work of Bu{\c{s}e \cite{Bus10} and consider the rational homotopy group $\pi_1(G_{k_1,k_2})\otimes \bbq$. Note that tensoring with $\bbq$ is defined here since the fundamental group of any topological group is Abelian. Recall the {\it ceiling function} $\lceil a\rceil$ defined to be the smallest integer greater than or equal to $a$. Before stating the main result of this section, we recall that $\gH\ga\gm(T^2\times S^2,\gro_{k_1,k_2})$ cannot contain a torus of dimension greater than one.

\begin{theorem}\label{conjmaxtori}
There are exactly $\lceil\frac{k_1}{k_2}\rceil$ conjugacy classes (under $G_{k_1,k_2}$) of maximal tori in $\gH\ga\gm(T^2\times S^2,\gro_{k_1,k_2})$. Each of these classes is represented by one of the circle subgroups $\cala_{2m}(S^1)$, $m=0,\ldots,\lceil\frac{k_1}{k_2}\rceil$.
\end{theorem}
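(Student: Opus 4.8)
The plan is to reduce the classification of conjugacy classes of maximal tori to a computation in the rational fundamental group of $G_{k_1,k_2}$, following the strategy of Abreu--McDuff and its adaptation to ruled surfaces over $T^2$ by Bu{\c{s}}e \cite{Bus10}. Since $\gH\ga\gm(T^2\times S^2,\gro_{k_1,k_2})$ contains no torus of dimension greater than one, every maximal torus is a circle; in particular, by Lemma \ref{fixHam}, each $\cala_{2m}(S^1)$ with $0\leq m<\frac{k_1}{k_2}$ is a genuine maximal torus. The integers in this range are exactly $m=0,\ldots,\lceil\frac{k_1}{k_2}\rceil-1$, so there are precisely $\lceil\frac{k_1}{k_2}\rceil$ candidate subgroups, and the content of the theorem is that they represent pairwise distinct conjugacy classes and exhaust all of them.

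For the non-conjugacy I would pass to homotopy. Each inclusion $\cala_{2m}\colon S^1\to G_{k_1,k_2}$ induces a homomorphism $\pi_1(S^1)=\bbz\to\pi_1(G_{k_1,k_2})$, hence a class $[\cala_{2m}]\in\pi_1(G_{k_1,k_2})\otimes\bbq$. The elementary point is that conjugation by an element of the identity component of a topological group is homotopic to the identity and so acts trivially on $\pi_1$; consequently, if two of the circles were conjugate in $G_{k_1,k_2}$, their classes would agree up to the action of $\pi_0(G_{k_1,k_2})$ and a sign coming from reversing the circle orientation. It therefore suffices to show that the rational lines $\bbq\cdot[\cala_{2m}]$, for $m=0,\ldots,\lceil\frac{k_1}{k_2}\rceil-1$, are pairwise distinct. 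This is where Bu{\c{s}}e's work enters: her computation describes $\pi_1(G_{k_1,k_2})\otimes\bbq$ for $(T^2\times S^2,\gro_{k_1,k_2})$ together with the images of the standard Hamiltonian circle actions, and I would use it to identify each $[\cala_{2m}]$ explicitly and verify that distinct values of $m$ give distinct (indeed $\bbq$-independent) classes.

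For exhaustiveness I would invoke Karshon's classification \cite{Kar99} of Hamiltonian circle actions on closed symplectic $4$-manifolds by their decorated fixed-point graphs. Any maximal torus in $\gH\ga\gm(T^2\times S^2,\gro_{k_1,k_2})$ is a Hamiltonian circle action, and by Karshon's theorem it is determined up to conjugacy by its graph. The graphs realizable on $T^2\times S^2$ with area class $\alpha_{k_1,k_2}$ are exactly those of the K\"ahler circle actions $\cala_{2m}$ on the ruled surfaces $S_{2m,\grt}$ of Section \ref{comsurf}, and the K\"ahler constraint of Lemma \ref{Kahclass} (equivalently the range in Lemma \ref{fixHam}) forces $0\leq m<\frac{k_1}{k_2}$. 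Hence every maximal torus is conjugate to one of the $\cala_{2m}$ in our list, and combined with the non-conjugacy this gives exactly $\lceil\frac{k_1}{k_2}\rceil$ classes.

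The main obstacle will be the second step: extracting from Bu{\c{s}}e's rational homotopy computation the precise images $[\cala_{2m}]\in\pi_1(G_{k_1,k_2})\otimes\bbq$ and checking that they are genuinely distinct across the whole range of $m$, since this is exactly where one must control the jumping behaviour of the symplectomorphism group as the ratio $\frac{k_1}{k_2}$ crosses successive integers. A secondary point requiring care is the bookkeeping of $\pi_0(G_{k_1,k_2})$ and of orientation reversal of the circles, so as to guarantee that equality of the \emph{unoriented} conjugacy classes really does force equality of the rational $\pi_1$-classes up to sign, and hence that distinctness in $\pi_1(G_{k_1,k_2})\otimes\bbq$ is enough to conclude.
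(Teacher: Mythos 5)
Your overall strategy is the same as the paper's: Lemma \ref{fixHam} to produce the candidate circles, Bu{\c{s}}e's computation of $\pi_1(G_{k_1,k_2})\otimes\bbq$ for pairwise non-conjugacy, and Karshon plus the K\"ahler classification for exhaustiveness. However, your non-conjugacy step contains a concrete misstep: you expect the classes $[\cala_{2m}]$ to be $\bbq$-independent and you reduce the problem to showing that the rational lines $\bbq\cdot[\cala_{2m}]$ are pairwise distinct. This is exactly what Bu{\c{s}}e's Lemma 4.3 (the result the paper invokes) rules out: it gives the relation $[\cala_{2m}]=m[\cala_{2}]$ in $\pi_1(G_{k_1,k_2})\otimes\bbq$ for $m=1,\ldots,\lceil\frac{k_1}{k_2}\rceil-1$, with $[\cala_2]\neq 0$, so all of these classes lie on the \emph{single} line $\bbq\,[\cala_{2}]$. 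Your proposed sufficient criterion is therefore false and cannot be verified. The conclusion still survives, and the repair is the argument the paper implicitly uses: the classes are pairwise distinct non-negative rational multiples of the nonzero vector $[\cala_{2}]$, so they remain pairwise distinct even after allowing the sign ambiguity you correctly identify from reversing the circle parametrization (since $m[\cala_2]=\pm m'[\cala_2]$ with $m,m'\geq 0$ forces $m=m'$). So rewrite that step around collinearity-with-positive-coefficients rather than independence.

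On exhaustiveness your route differs in detail from the paper's: you want to enumerate the Karshon graphs realizable on $(T^2\times S^2,\gro_{k_1,k_2})$ and match them to the $\cala_{2m}$, but this enumeration is asserted rather than carried out. The paper instead applies Karshon's Theorem 7.1 to produce an invariant compatible complex structure $J'$, uses the Atiyah--Suwa classification together with the K\"ahler-class bound of Lemma \ref{Kahclass} to force $J'$ onto the list of Section \ref{complex structures}, and then invokes conjugacy of maximal tori in the compact group $\gA\gu\gt(\gro_{k_1,k_2},J')$; this avoids any graph bookkeeping. Both routes share the same unaddressed subtlety, namely that Karshon-type uniqueness a priori gives conjugation by some symplectomorphism, and one must check the conjugating element can be taken in $G_{k_1,k_2}$ (in the paper's version this comes from working in the identity component of the K\"ahler automorphism group). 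Finally, your attention to $\pi_0(G_{k_1,k_2})$ is a genuine refinement: the paper simply asserts that classes in $\pi_1(G_{k_1,k_2})\otimes\bbq$ are conjugation-invariant, which is automatic only for conjugation by elements of the identity component.
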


\begin{proof}
First notice that since $\gH\ga\gm(T^2\times S^2,\gro_{k_1,k_2})$ is normal in $G_{k_1,k_2}$ it makes sense to consider conjugacy under this larger group. Then by Lemma \ref{fixHam} we have $\lceil\frac{k_1}{k_2}\rceil$ Hamiltonian circle actions given by $\cala_{2m}$ with $m=0,\cdots\lceil\frac{k_1}{k_2}\rceil-1$. According to Lemma 4.3 of \cite{Bus10} the induced maps in rational homotopy $[\cala_{2m}]$ satisfy the equation
\begin{equation}\label{Buseq}
[\cala_{2m}]=m[\cala_{2}]\in \pi_1(G_{k_1,k_2})\otimes \bbq
\end{equation}
for $m=1,\cdots,\lceil\frac{k_1}{k_2}\rceil -1$. Moreover, they are non-trivial in $\pi_1(G_{k_1,k_2})\otimes \bbq$. But elements of the vector space $\pi_1(G_{k_1,k_2})\otimes \bbq$ are invariant under conjugacy giving altogether at least $\lceil\frac{k_1}{k_2}\rceil$ conjugacy classes of circles (maximal tori) in $\gH\ga\gm(T^2\times S^2,\gro_{k_1,k_2})$. 

We now show there are no other such conjugacy classes. Suppose there is another Hamiltonian circle action that does not belong to one of the conjugacy classes described above. By Theorem 7.1 of \cite{Kar99} there is a compatible complex structure $J'$ such that $(T^2\times S^2,\gro_{k_1,k_2},J')$ is K\"ahler. But then it must satisfy the bound of Lemma \ref{Kahclass}, and the classification of complex structures \cite{Ati57,Suw69} on ruled surfaces implies that $J'$ must belong to one on the list in Section \ref{complex structures} for which the conjugacy classes have been determined. Since maximal tori in $\gA\gu\gt(\gro_{k_1,k_2},J')$ are unique up to conjugacy, this gives a contradiction. (See also Chapter VIII of \cite{Aud04}).
\end{proof}

\section{Sasakian Structures on $M^5_{k_1,k_2}$}

Sasakian structures can be easily constructed on $M^5_{k_1,k_2}$ by applying the Inversion Theorem 7.1.6 of \cite{BG05} to the K\"ahlerian structures on ruled surfaces of genus one discussed in Section \ref{complex structures}. Consider the symplectic 4-manifold $(T^2\times S^2,\gro_{k_1,k_2})$ and construct the principal $S^1$-bundle $\pi:M^5_{k_1,k_2}\ra{1.6} T^2\times S^2$ over it corresponding to the class $\gra_{k_1,k_2}=[\gro_{k_1,k_2}]\in H^2(T^2\times S^2,\bbz)$. Let $\eta_{k_1,k_2}$ be a connection 1-form in $M^5_{k_1,k_2}$ satisfying $d\eta_{k_1,k_2}=\pi^*\gro_{k_1,k_2}$. By Boothby-Wang $(M^5_{k_1,k_2},\eta_{k_1,k_2})$ is a regular contact manifold with contact 1-form $\eta_{k_1,k_2}$ and contact bundle $\cald_{k_1,k_2}=\ker\eta_{k_1,k_2}$. Choosing complex structures $J\in {\mathcal S}_{0,\tau}\cup \{A_{0,\grt}\}\cup S_{2m,\grt}$, we obtain K\"ahler structures $(\gro_{k_1,k_2},J,h_{k_1,k_2})$ on $T^2\times S^2$ subject to the conditions that $k_1,k_2$ are relatively prime positive integers and $m<\frac{k_1}{k_2}$ and the K\"ahler metric\footnote{The opposite convention to that usually used in K\"ahlerian geometry is typically used in Sasakian geometry, namely, that $\gro\circ(J\otimes \BOne)$ is positive.} is given by $h_{k_1,k_2}=\gro_{k_1,k_2}\circ(J\otimes\BOne)$. By taking the horizontal lift of $J$ and extending it to a section $\Phi$ of the endomorphism bundle of $TM^5_{k_1,k_2}$ by imposing $\Phi\xi_{k_1,k_2}=0$ where $\xi_{k_1,k_2}$ is the Reeb vector field of $\eta_{k_1,k_2}$, we obtain Sasakian structures $(\xi_{k_1,k_2},\eta_{k_1,k_2},\Phi,g)$ on $M^5_{k_1,k_2}$.

\subsection{Families of Sasakian Structures associated to $\cald_{k_1,k_2}$}\label{famsas}
We easily obtain families of transverse complex structures by lifting the families of complex structures from the base manifold. Nevertheless, it is interesting to see how this relates to applying Kodaira-Spencer deformation theory to the transverse geometry of the characteristic foliation $\calf_\xi$ of a fixed Sasakian structure $\cals=(\xi,\eta,\Phi,g)$. So we can apply Proposition 8.2.6 of \cite{BG05} to our case and use the fact \cite{Suw69} that for any ruled surface $\calz$ we have $H^2(\calz,\Theta)=0$ where $\Theta$ is the sheaf of germs of holomorphic vector fields on $\calz$. If $\Theta_{\calf_\xi}$ denotes the sheaf of germs of transverse holomorphic vector fields on the Sasakian circle bundle $M^5_{k_1,k_2}$ over $\calz$, the aforementioned proposition gives the exact sequence
\begin{equation}\label{transexactseq}
0\ra{2.4}H^1(\calz,\Theta)\ra{2.5}H^1(M^5_{k_1,k_2},\Theta_{\calf_\xi}) \ra{2.5}H^0(\calz,\Theta) \ra{2.4} 0.
\end{equation}
So the transverse holomorphic deformations on $M^5_{k_1,k_2}$ arise in two distinct ways, first from the holomorphic deformations of the base, and second from the holomorphic symmetries of the base. The first inclusion map is the differential of the lift of a complex structure $\hat{J}\in {\mathcal S}_{0,\tau}\cup \{A_{0,\grt}\}\cup S_{2m,\grt}$ to a strictly pseudoconvex CR structure $(\cald_{k_1,k_2},J)$ on $M^5_{k_1,k_2}$. Extending $J$ to the endomorphism $\Phi$ by demanding $\Phi\xi=0$ gives families of Sasakian structures with the same Reeb vector field $\xi$. As mentioned above the inverse to the Boothby-Wang construction (cf. Theorem 7.1.6 of \cite{BG05}) guarantees that these structures are Sasakian with underlying CR structure $(\cald,J)$. In fact they all share the same contact 1-form $\eta$. By abuse of notation we will also use ${\mathcal S}_{0,\tau}\cup \{A_{0,\grt}\}\cup S_{2m,\grt}$ as the local parameter space for the transverse complex structures, writing $J\in {\mathcal S}_{0,\tau}\cup \{A_{0,\grt}\}\cup S_{2m,\grt}$.

The relation with infinitesimal symmetries is more involved. In order that a holomorphic vector field on $\calz$ give Sasakian deformations of Sasakian structures it is necessary that it also be Hamiltonian which means in our case that it be one of the circle actions discussed in Section \ref{hamS1}. For it is precisely the Hamiltonian Killing vector fields $\hat{X}$ that lift to an infinitesimal automorphism of the Sasakian structure by Corollary 8.1.9 of \cite{BG05}. Let us see exactly how a Hamiltonian Killing vector field lifts.
\begin{lemma}\label{liftham}
Let  $M$ be a quasiregular Sasakian manifold with Sasakian structure $\cals=(\xi,\eta,\Phi,g)$ and let $\pi:M\ra{1.6}\calz$ be the orbifold Boothby-Wang map to the K\"ahler orbifold $\calz$ with K\"ahler form $\gro$. Let $\cX$ be a vector field on $\calz$ leaving both the K\"ahler form $\gro$ and the complex structure $J$ invariant. Then $\cX$ lifts to an infinitesimal automorphism $X$ of the Sasakian structure $\cals$ that is unique modulo the ideal $\cali_\xi$ generated by $\xi$ if and only if it is Hamiltonian. Furthermore, if $\cX$ is Hamiltonian with Hamiltonian function $H$ satisfying $\cX\hook\gro=-dH$, then $X$ can be represented by $\cX^h+\pi^*H\xi$ where $\cX^h$ denotes the horizontal lift of $\cX$.
\end{lemma}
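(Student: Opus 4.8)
The plan is to prove this by working with the Boothby-Wang principal circle bundle structure explicitly and exploiting the correspondence between transverse and basic data on $M$ and data downstairs on $\calz$. Since $\pi:M\to\calz$ is a quasiregular (orbifold) Boothby-Wang fibration with $d\eta=\pi^*\gro$, the Reeb field $\xi$ generates the structure circle action and $\eta$ is a connection one-form. For any vector field $\cX$ on $\calz$ I would write down the candidate lift $\tilde X=\cX^h+\pi^*H\,\xi$, where $\cX^h$ is the unique horizontal lift determined by $\eta(\cX^h)=0$ and $\pi_*\cX^h=\cX$, and then verify directly that $\tilde X$ is an infinitesimal automorphism of $\cals=(\xi,\eta,\Phi,g)$ precisely when $\cX$ is Hamiltonian, i.e. when there exists $H$ with $\cX\hook\gro=-dH$.

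First I would establish the easy direction and the uniqueness-modulo-$\cali_\xi$ clause. A lift $X$ of $\cX$ (meaning $\pi_*X=\cX$) is determined up to adding a multiple of $\xi$, so uniqueness modulo $\cali_\xi$ is immediate once existence is shown; the only freedom is the coefficient $f\xi$, and requiring $X$ to preserve $\eta$ pins $f$ down to $\pi^*H$ up to an additive constant. The substance is to check that $\tilde X=\cX^h+\pi^*H\,\xi$ preserves the full Sasakian structure, which by Remark \ref{sasconerem} (and equation (\ref{sasmetric})) reduces to showing $\pounds_{\tilde X}\xi=0$, $\pounds_{\tilde X}\eta=0$, and $\pounds_{\tilde X}\Phi=0$; preservation of $g$ then follows since $g$ is built from $\eta$, $\Phi$, and $d\eta$. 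I would compute $\pounds_{\tilde X}\eta$ using Cartan's formula $\pounds_{\tilde X}\eta=d(\eta(\tilde X))+\tilde X\hook d\eta$. Here $\eta(\tilde X)=\pi^*H$ and $\tilde X\hook d\eta=\cX^h\hook\pi^*\gro=\pi^*(\cX\hook\gro)=-\pi^*dH=-d(\pi^*H)$, so the two terms cancel and $\pounds_{\tilde X}\eta=0$ exactly when $\cX\hook\gro$ is exact, i.e. $\cX$ is Hamiltonian. This is the crux computation and it cleanly isolates the Hamiltonian condition.

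Next I would handle $\xi$ and $\Phi$. Since $\cX$ and $\xi$ project to $\cX$ and $0$ respectively and $\xi$ is $\pi$-vertical, $[\tilde X,\xi]$ is vertical; preservation of $\eta$ together with $\eta(\xi)=1$ forces $\eta([\tilde X,\xi])=0$, so $[\tilde X,\xi]=0$, giving $\pounds_{\tilde X}\xi=0$. For $\Phi$, I would use that $\Phi$ is the horizontal lift of $J$ in the sense that $\pi_*\Phi Y=J\pi_*Y$ for foliate $Y$ and $\Phi\xi=0$. Because $\cX$ leaves $J$ invariant by hypothesis, and $\tilde X$ preserves both $\eta$ (hence the contact distribution $\cald=\ker\eta$) and $\xi$, the induced action of $\tilde X$ on $\cald$ is the horizontal lift of the $J$-preserving action of $\cX$ on $T\calz$; transporting $\pounds_\cX J=0$ upward through the horizontal lift yields $\pounds_{\tilde X}\Phi=0$ on $\cald$, while on $\xi$ both sides vanish.

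The main obstacle I expect is the $\Phi$-invariance step, because it requires care in relating the transverse complex structure to the endomorphism $\Phi$ and in justifying that Lie derivatives commute appropriately with the horizontal lift; the subtlety is that the horizontal lift is only defined on foliate (basic) vector fields and one must check the identity $\pi_*(\pounds_{\tilde X}\Phi)Y = (\pounds_\cX J)\pi_*Y$ holds after accounting for the vertical component and the fact that $\pounds_{\tilde X}$ preserves the splitting $TM=\cald\oplus\bbr\xi$. The $\eta$ and $\xi$ parts are short Cartan-calculus verifications, but the $\Phi$ part is where the orbifold/foliate bookkeeping must be done honestly. I would therefore organize the proof to dispatch $\eta$ and $\xi$ first, and then devote the bulk of the argument to the $\Phi$-invariance, invoking that the transverse Kähler structure is preserved because $\cX$ preserves both $\gro$ and $J$ downstairs.
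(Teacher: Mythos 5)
Your proposal is correct, but it takes a genuinely more self-contained route than the paper. The paper's proof is two lines: the entire ``lifts iff Hamiltonian, uniquely modulo $\cali_\xi$'' claim is disposed of by citing Corollary 8.1.9 of \cite{BG05}, and the only computation performed is the one identifying the coefficient --- writing $X=\cX^h+a\xi$ and imposing $\pounds_X\eta=0$ to get $da=-\cX^h\hook d\eta=-\pi^*(\cX\hook\gro)=d\pi^*H$, whence $a=\pi^*H$. Your plan runs this same Cartan-formula computation, but in the opposite logical direction (as a verification that the explicit candidate $\cX^h+\pi^*H\,\xi$ preserves $\eta$ exactly when $\cX\hook\gro$ is exact), and then supplies from scratch everything the citation covers: the vanishing of $[\tilde X,\xi]$, the invariance of $\Phi$, and the invariance of $g$ via Equation (\ref{sasmetric}). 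What the paper's approach buys is brevity; what yours buys is a self-contained argument that isolates precisely where the Hamiltonian hypothesis enters, and your $\Phi$-step can indeed be closed cleanly: for foliate $Y$, $(\pounds_{\tilde X}\Phi)Y$ lies in $\cald$ (since $\tilde X$ preserves $\eta$) and projects under $\pi_*$ to $(\pounds_{\cX}J)\pi_*Y=0$, so injectivity of $\pi_*|_\cald$ kills it. Two small points to tighten: in the ``only if'' direction $H$ is not given in advance, so you should say that $\eta$-preservation forces $\xi(f)=0$, hence $f$ is basic and \emph{descends} to define $H$ with $\cX\hook\gro=-dH$ (rather than being ``pinned down to $\pi^*H$''); and since the base is an orbifold, the correspondence between basic objects on $M$ and objects on $\calz$ should be invoked chart-wise, which is exactly the bookkeeping you flag.
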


\begin{proof}
The first claim is just Corollary 8.1.9 of \cite{BG05}. To see that the lift can be represented by $\cX^h+\pi^*H\xi$ we look for a smooth basic function $a$, which exists by the first part, such that $X=\cX^h+a\xi$ and compute $0=\pounds_X\eta=X\hook d\eta + d(\eta(X))$ implying 
$$da=-\cX^h\hook d\eta=-\cX^h\hook \pi^*\gro=-\pi^*(\cX\hook\gro)=d\pi^*H.$$
So we choose $a=H$.
\end{proof}

The Reeb vector field together with the lift $X=\hat{X}^h+\eta(X)\xi$ span the Lie algebra $\gt_2$ of a maximal torus $\gT^2\in \gC\go\gn(M^5_{k_1,k_2},\eta)$. So we obtain deformed Sasakian structures by choosing another Reeb vector field $\xi'$ representing an element in the Sasaki cone $\grk(\cald,J)$.

\subsection{Bouquets of Sasakian Structures}
If $J\in {\mathcal S}_{0,\tau}$ then the Hamiltonian circle action leaving the K\"ahler structure $(\gro_{k_1,k_2},J,h_{k_1,k_2})$ invariant is $\cala_0$, whereas, if $J\in S_{2m,\grt}$ and $m<\frac{k_1}{k_2}$, the Hamiltonian circle action leaving $(\gro_{k_1,k_2},J,h_{k_1,k_2})$ invariant is $\cala_{2m}$. Now according to \cite{Ler02b,Boy10a} these Hamiltonian circle groups lift to maximal tori of dimension two in the contactomorphism group $\gC\go\gn(M^5_{k_1,k_2},\eta_{k_1,k_2})$. Furthermore, applying Theorem \ref{conjmaxtori} the corresponding circle groups $\cala_{2m'}$ and $\cala_{2m}$ lift to maximal tori  in $\gC\go\gn(M^5_{k_1,k_2},\eta_{k_1,k_2})$ that are non-conjugate in the larger group $\gC\go\gn(M^5_{k_1,k_2},\cald_{k_1,k_2})$ when $0\leq m'< m<\frac{k_1}{k_2}$. Since there are $\lceil \frac{k_1}{k_2}\rceil$ such Hamiltonian circle subgroups, there are $\lceil \frac{k_1}{k_2}\rceil$ maximal tori of dimension two in the contactomorphism group all containing the ray generated by the Reeb vector field $\xi_{k_1,k_2}$. In fact, they intersect precisely in this ray. Thus, using Theorem \ref{conjmaxtori} we have
\begin{proposition}\label{contconj}
The contactomorphism group $\gC\go\gn(M^5_{k_1,k_2},\cald_{k_1,k_2})$ contains at least $\lceil \frac{k_1}{k_2}\rceil$ distinct conjugacy classes of maximal tori of dimension $2$ of Reeb type, and exactly $\lceil \frac{k_1}{k_2}\rceil$ conjugacy classes of maximal tori of dimension $2$ of Reeb type that intersect in the ray of the Reeb vector field $\12$.
\end{proposition}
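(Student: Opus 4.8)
The plan is to prove Proposition \ref{contconj} by transporting the conjugacy-class count of maximal tori established for the symplectic base $(T^2\times S^2,\gro_{k_1,k_2})$ in Theorem \ref{conjmaxtori} up to the contactomorphism group of the Boothby-Wang bundle $M^5_{k_1,k_2}$. First I would recall the mechanism, already cited from \cite{Ler02b,Boy10a} in the paragraph preceding the statement, by which a Hamiltonian circle action on the base lifts to a maximal torus of dimension two in $\gC\go\gn(M^5_{k_1,k_2},\eta_{k_1,k_2})$: by Lemma \ref{liftham} each Hamiltonian Killing vector field $\hat X$ on $T^2\times S^2$ lifts to an infinitesimal automorphism $X=\hat X^h+\pi^*H\,\xi_{k_1,k_2}$ of the Sasakian structure, and together with the central Reeb field $\xi_{k_1,k_2}$ it spans the Lie algebra $\gt_2$ of a two-torus $\gT^2$. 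Applying this to each of the $\lceil\frac{k_1}{k_2}\rceil$ circle actions $\cala_{2m}$, $m=0,\dots,\lceil\frac{k_1}{k_2}\rceil-1$, produces $\lceil\frac{k_1}{k_2}\rceil$ two-tori in the contactomorphism group, each of Reeb type since it contains $\xi_{k_1,k_2}$ in its Lie algebra, and each maximal because $\gH\ga\gm(T^2\times S^2,\gro_{k_1,k_2})$ cannot contain a torus of dimension greater than one (the fact recalled just before Theorem \ref{conjmaxtori}), so the lifted torus can have dimension at most two.

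Next I would address the conjugacy count. The key point is that the lifting construction is compatible with the group structure: a contactomorphism conjugating two lifted tori in $\gC\go\gn(M^5_{k_1,k_2},\cald_{k_1,k_2})$ descends, via the Boothby-Wang projection that intertwines the $S^1$-action with the transverse geometry, to an element of $G_{k_1,k_2}$ conjugating the corresponding circle actions on the base. Thus two distinct circle actions $\cala_{2m'}$ and $\cala_{2m}$ with $0\le m'<m<\frac{k_1}{k_2}$, which by Theorem \ref{conjmaxtori} lie in distinct $G_{k_1,k_2}$-conjugacy classes, lift to two-tori that are non-conjugate under $\gC\go\gn(M^5_{k_1,k_2},\cald_{k_1,k_2})$. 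This yields the asserted lower bound of at least $\lceil\frac{k_1}{k_2}\rceil$ distinct conjugacy classes of maximal tori of Reeb type. For the sharper ``exactly'' statement, I would restrict attention to those maximal tori of Reeb type whose associated complex structure makes $\alpha_{k_1,k_2}$ a K\"ahler class and that intersect in the ray of $\xi_{k_1,k_2}$; any such torus corresponds, after reduction, to a maximal torus in the symplectomorphism group of the base, and by Theorem \ref{conjmaxtori} there are exactly $\lceil\frac{k_1}{k_2}\rceil$ of these up to conjugacy. The final sentence, that these tori intersect precisely in the ray of $\xi_{k_1,k_2}$, follows because each lifted $\gt_2$ contains $\xi_{k_1,k_2}$ by construction, while their transverse directions project to the pairwise non-conjugate (in particular distinct) circle generators on the base, so no other common element can survive.

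The main obstacle I expect is making the descent argument rigorous, that is, establishing cleanly that conjugacy of the lifted tori in the contactomorphism group $\gC\go\gn(M^5_{k_1,k_2},\cald_{k_1,k_2})$ forces conjugacy of the underlying circle actions under the correct group $G_{k_1,k_2}$ on the base, rather than merely under $\gS\gy\gm_0$ or $\gH\ga\gm$. This requires knowing that contactomorphisms of $\cald_{k_1,k_2}$ that preserve the isotopy class descend to symplectomorphisms lying in $G_{k_1,k_2}=\gS\gy\gm(T^2\times S^2,\gro_{k_1,k_2})\cap \gD\gi\gf\gf_0(T^2\times S^2)$, which is exactly the enlarged group under which Theorem \ref{conjmaxtori} is phrased; this is the role played by the results of \cite{Ler02b,Boy10a} cited above, and the bookkeeping must be arranged so that the conjugation data passes consistently in both directions. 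Once that correspondence is in hand, the remaining steps are formal, and the counting follows directly from Theorem \ref{conjmaxtori}.
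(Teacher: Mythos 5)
Your proposal is correct and follows essentially the same route as the paper: the paper likewise lifts the $\lceil\frac{k_1}{k_2}\rceil$ Hamiltonian circle actions $\cala_{2m}$ to two-dimensional maximal tori of Reeb type via \cite{Ler02b,Boy10a}, invokes Theorem \ref{conjmaxtori} to conclude these are non-conjugate in $\gC\go\gn(M^5_{k_1,k_2},\cald_{k_1,k_2})$, and obtains the exact count for tori meeting in the Reeb ray from the same descent correspondence. The subtlety you flag (that conjugacy upstairs must descend to $G_{k_1,k_2}$-conjugacy on the base, even though a conjugating contactomorphism need not preserve the Reeb field) is exactly what the paper delegates to the cited results of Lerman and Boyer, so your identification of both the obstacle and its resolution matches the paper's treatment.
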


The second statement of Proposition \ref{contconj} can be reformulated as
\begin{corollary}\label{contconjcor}
The strict contactomorphism group $\gC\go\gn(M^5_{k_1,k_2},\eta_{k_1,k_2})$ contains exactly $\lceil \frac{k_1}{k_2}\rceil$ distinct conjugacy classes of maximal tori of dimension $2$ of Reeb type.
\end{corollary}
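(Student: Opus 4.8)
The plan is to show that the corollary is exactly the translation of the second statement of Proposition \ref{contconj} into the language of the strict contactomorphism group, the dictionary being supplied by the fact that the Reeb ray is canonically distinguished inside $\gC\go\gn(M^5_{k_1,k_2},\eta_{k_1,k_2})$. First I would record the structural observation from the preliminaries: since $\eta_{k_1,k_2}$ is regular, the group $\gR_{\xi_{k_1,k_2}}$ generated by the Reeb field is a circle lying in the center of $\gC\go\gn(M^5_{k_1,k_2},\eta_{k_1,k_2})$. Consequently this Reeb circle is contained in every maximal torus of $\gC\go\gn(M^5_{k_1,k_2},\eta_{k_1,k_2})$, so every maximal torus of the strict contactomorphism group is automatically of Reeb type and any two of them meet in at least the Reeb ray.

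Next I would establish that the maximal $2$-tori of $\gC\go\gn(M^5_{k_1,k_2},\eta_{k_1,k_2})$ are exactly the maximal $2$-tori of Reeb type of $\gC\go\gn(M^5_{k_1,k_2},\cald_{k_1,k_2})$ that contain the Reeb ray. For the less obvious inclusion, let $\gT$ be such a torus in $\gC\go\gn(M^5_{k_1,k_2},\cald_{k_1,k_2})$ with $\xi_{k_1,k_2}\in\gt$. Since $\gT$ is abelian and $\xi_{k_1,k_2}\in\gt$, every $\phi\in\gT$ satisfies $\phi_*\xi_{k_1,k_2}=\xi_{k_1,k_2}$; writing $\phi^*\eta_{k_1,k_2}=f\,\eta_{k_1,k_2}$ and evaluating on $\xi_{k_1,k_2}$ forces $f\equiv 1$, so $\phi\in\gC\go\gn(M^5_{k_1,k_2},\eta_{k_1,k_2})$ and hence $\gT\subset\gC\go\gn(M^5_{k_1,k_2},\eta_{k_1,k_2})$. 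Maximality transfers between the two groups, because any torus of $\gC\go\gn(M^5_{k_1,k_2},\eta_{k_1,k_2})$ enlarging $\gT$ would again be a Reeb-type torus of $\gC\go\gn(M^5_{k_1,k_2},\cald_{k_1,k_2})$ enlarging it. Thus the two families of tori coincide as sets.

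It then remains to match conjugacy classes. The inclusion $\gC\go\gn(M^5_{k_1,k_2},\eta_{k_1,k_2})\subset\gC\go\gn(M^5_{k_1,k_2},\cald_{k_1,k_2})$ makes $\gC\go\gn(M^5_{k_1,k_2},\eta_{k_1,k_2})$-conjugacy a refinement of $\gC\go\gn(M^5_{k_1,k_2},\cald_{k_1,k_2})$-conjugacy, so the $\lceil k_1/k_2\rceil$ tori constructed in Section \ref{famsas}, being pairwise non-conjugate under the larger group by Proposition \ref{contconj}, remain pairwise non-conjugate under $\gC\go\gn(M^5_{k_1,k_2},\eta_{k_1,k_2})$; this yields at least $\lceil k_1/k_2\rceil$ classes. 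For the reverse bound I would project an arbitrary maximal $2$-torus $\gT\subset\gC\go\gn(M^5_{k_1,k_2},\eta_{k_1,k_2})$ through the Boothby-Wang map $\pi:M^5_{k_1,k_2}\to T^2\times S^2$: since $\gT\supset\gR_{\xi_{k_1,k_2}}$, invariance of $\eta_{k_1,k_2}$ together with $d\eta_{k_1,k_2}=\pi^*\gro_{k_1,k_2}$ shows that the quotient circle $\gT/\gR_{\xi_{k_1,k_2}}$ acts on $(T^2\times S^2,\gro_{k_1,k_2})$ by Hamiltonian symplectomorphisms, and it is maximal in $\gH\ga\gm(T^2\times S^2,\gro_{k_1,k_2})$ because that group contains no torus of dimension greater than one. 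By Theorem \ref{conjmaxtori} this circle is conjugate to one of the $\cala_{2m}(S^1)$, and invoking the correspondence of \cite{Ler02b,Boy10a} between maximal tori of Reeb type in the strict contactomorphism group and maximal tori in the Hamiltonian group of the base shows $\gT$ is $\gC\go\gn(M^5_{k_1,k_2},\eta_{k_1,k_2})$-conjugate to some $\gT_{2m}$. Combining the two bounds gives exactly $\lceil k_1/k_2\rceil$ classes.

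I expect the main obstacle to be this last lifting step: Theorem \ref{conjmaxtori} provides conjugacy only in the symplectomorphism group $G_{k_1,k_2}$ of the base, whereas the corollary demands conjugacy inside the smaller strict contactomorphism group upstairs. The care needed is to ensure that a base symplectomorphism conjugating $\gT/\gR_{\xi_{k_1,k_2}}$ to $\cala_{2m}(S^1)$ can be promoted to a bundle automorphism preserving $\eta_{k_1,k_2}$ rather than merely preserving $\cald_{k_1,k_2}$; this is precisely the content of the lifting results of \cite{Ler02b,Boy10a} for regular contact structures, so the work lies in verifying that their hypotheses apply in our Boothby-Wang setting rather than in any new computation.
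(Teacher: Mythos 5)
Your proposal is correct and follows essentially the same route as the paper: there the corollary is obtained as a direct reformulation of the second statement of Proposition \ref{contconj}, whose proof uses exactly your ingredients --- the lifting correspondence of \cite{Ler02b,Boy10a} between Hamiltonian circles on the base and maximal $2$-tori in $\gC\go\gn(M^5_{k_1,k_2},\eta_{k_1,k_2})$, together with the count of conjugacy classes in Theorem \ref{conjmaxtori}. Your additional details (centrality of the Reeb circle forcing every maximal torus of the strict group to be of Reeb type, and the computation showing a Reeb-type torus whose Lie algebra contains $\xi_{k_1,k_2}$ preserves $\eta_{k_1,k_2}$) merely make explicit the dictionary the paper compresses into the word ``reformulated''.
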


As a consequence of this we have

\begin{theorem}\label{sasbouq}
For each positive integer $k_2$ the 5-manifolds $M^5_{k_1,k_2}$ admit a countably infinite number of distinct contact structures $\cald_{k_1,k_2}$ labelled by $k_1\in\bbz^+$ which is relatively prime to $k_2$ each having a Sasaki $N$-bouquet $\gB_N(\cald_{k_1,k_2})$ with $N=\lceil\frac{k_1}{k_2}\rceil$ consisting of 2-dimensional Sasaki cones intersecting in a ray. In particular, the manifold $M^5_{k,1}\approx T^2\times S^3$ admits a countably infinite number of distinct contact structures $\cald_k$ labelled by $k\in\bbz^+$ each having a Sasaki $k$-bouquet of Sasakian structures consisting of 2-dimensional Sasaki cones intersecting in a ray.
\end{theorem}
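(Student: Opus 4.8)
The plan is to assemble the results of the preceding sections; the one point needing its own argument is the inequivalence of the contact structures. First I would recall that Section \ref{famsas} produces, for each relatively prime pair $(k_1,k_2)$, the regular Boothby--Wang contact structure $\cald_{k_1,k_2}=\ker\eta_{k_1,k_2}$ on $M^5_{k_1,k_2}$ over $(T^2\times S^2,\gro_{k_1,k_2})$, and that the constructions there make it of Sasaki type. To see that for fixed $k_2$ the family $\{\cald_{k_1,k_2}\}$ is a countably infinite collection of pairwise inequivalent contact structures, I would invoke the first Chern class as an invariant: by Lemma \ref{Chernclass}, $c_1(\cald_{k_1,k_2})=2k_1\grg$ with $\grg$ a generator of $H^2(M^5_{k_1,k_2},\bbz)/({\rm torsion})\approx\bbz$. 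A contactomorphism $\cald_{k_1,k_2}\to\cald_{k_1',k_2}$ would induce a diffeomorphism of the underlying manifolds carrying $\grg$ to $\pm\grg'$ and matching first Chern classes, which forces $k_1=k_1'$; hence the $\cald_{k_1,k_2}$ are pairwise distinct as $k_1$ ranges over the positive integers prime to $k_2$.

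Next I would build the bouquet. By Corollary \ref{contconjcor} the strict contactomorphism group $\gC\go\gn(M^5_{k_1,k_2},\eta_{k_1,k_2})$ contains exactly $N=\lceil\frac{k_1}{k_2}\rceil$ conjugacy classes of maximal tori of dimension two of Reeb type. Each such class is realized by lifting, through Lemma \ref{liftham}, one of the Hamiltonian circle actions $\cala_{2m}$ of Section \ref{hamS1} (for $m=0,\ldots,N-1$) together with the Reeb flow of $\eta_{k_1,k_2}$, and each carries the compatible transverse complex structure $J_{2m}$ obtained as the horizontal lift of the complex structure on the associated ruled surface ($\calS_{0,\grt}$ when $m=0$, and $S_{2m,\grt}$ otherwise). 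By Remark \ref{sasconerem} and the definition of the Sasaki cone in Section 2, each of these $N$ non-conjugate classes determines a two-dimensional Sasaki cone $\grk(\cald_{k_1,k_2},J_{2m})$, and their union $\gB_N(\cald_{k_1,k_2})=\bigcup_{m=0}^{N-1}\grk(\cald_{k_1,k_2},J_{2m})$ is by definition a Sasaki $N$-bouquet of two-dimensional cones.

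It remains to see that the cones meet exactly in a ray. Because Corollary \ref{contconjcor} counts the maximal tori of Reeb type lying inside the group $\gC\go\gn(M^5_{k_1,k_2},\eta_{k_1,k_2})$ that preserves $\eta_{k_1,k_2}$ itself, the integer $N$ equals the number of such maximal tori, which is precisely the completeness condition with respect to $\eta_{k_1,k_2}$ recorded in Section 2. Hence the intersection of the cones contains the ray of the common Reeb field $\12$, and by the final assertion of Proposition \ref{contconj} the $N$ tori intersect in exactly this ray; therefore so do the $N$ Sasaki cones. Specializing $k_2=1$ then yields the stated `in particular': Theorem \ref{diffeothm} identifies $M^5_{k,1}$ with $T^2\times S^3$ for every $k\in\bbz^+$, so the whole family $\{\cald_{k,1}\}$ lives on the single manifold $T^2\times S^3$, here $N=\lceil k/1\rceil=k$, and the preceding paragraphs supply the countably many distinct contact structures $\cald_k$ each carrying a Sasaki $k$-bouquet of two-dimensional cones meeting in a ray.

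I expect the only genuine obstacle to be the distinctness step of the first paragraph, since every other ingredient is handed over verbatim by an earlier result. Its force rests on the fact that a self-diffeomorphism of $T^2\times S^3$ can act on the free part of $H^2$ only by $\pm 1$, which is what makes the Chern classes $2k_1\grg$ genuinely separate the contact structures; the remainder of the argument is a matter of quoting Corollary \ref{contconjcor}, Lemma \ref{liftham}, and the bouquet formalism of Section 2.
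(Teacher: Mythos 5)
Your proposal is correct and follows essentially the same route as the paper, which likewise deduces distinctness of the $\cald_{k_1,k_2}$ from Lemma \ref{Chernclass} and the bouquet statement from the preceding discussion (the lifted Hamiltonian circle actions and Proposition \ref{contconj}/Corollary \ref{contconjcor}) together with Theorem \ref{conjmaxtori}. The only difference is that you spell out details the paper leaves implicit, such as the $\pm\grg$ action of a diffeomorphism on $H^2/({\rm torsion})$ forcing $k_1=k_1'$, and the identification of the common intersection of the cones with the Reeb ray.
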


\begin{proof}
The fact that the contact structures $\cald_{k_1,k_2}$ and $\cald_{k_1',k_2}$ are inequivalent when $k_1'\neq k_1$ follows from Lemma \ref{Chernclass}. The statement about the bouquets is a consequence of the discussion above and Theorem \ref{conjmaxtori}.
\end{proof}

\subsection{The Sasaki Cones}
Here we determine the Sasaki cones associated to the different CR structures on $M^5_{k_1,k_2}$. Consider the Sasaki cone $\grk(\cald_{k_1,k_2},J)$. As discussed at the end of Section \ref{famsas} the circle actions on $M^5_{k_1,k_2}$ are determined by lifting the Hamiltonian circle actions $\cala_{2m}$ for $m=0,\cdots,\lceil\frac{k_1}{k_2}\rceil-1$ on $T^2\times S^2$ horizontally to $M^5_{k_1,k_2}$ and extending it to leave the contact structure invariant. Let $\cals=(\xi,\eta,\Phi,g)$ be a regular Sasakian structure on a compact manifold $M$ fibering over $T^2\times S^2$ with its K\"ahler form $\gro$ and projection map $\pi:M\ra{1.5} T^2\times S^2$. Then according to Lemma \ref{liftham} a Hamiltonian Killing vector field $\cX$ lifts to an element $X\in \ga\gu\gt(\cals)$ giving a circle action on $M^5_{k_1,k_2}$. We call the circle action generated by $X$ on $M$ the {\it induced Hamiltonian circle action} on $M$.

\begin{lemma}\label{sasconeprop}
Consider the Sasakian structure $\cals_{k_1,k_2}=(\12,\eta_{k_1,k_2},\Phi_\grt,g)$ on the 5-manifold $M^5_{k_1,k_2}$ with $\Phi_\grt |_{\cald_{k_1,k_2}}=J_\grt$ for $\grt\in\calm$.
Let $X_{2m}$ denote the infinitesimal generator of the induced Hamiltonian circle action on $M^5_{k_1,k_2}$. 
\begin{enumerate}
\item If 
$$J_\grt\in {\mathcal S}_{0,\grt}\sqcup\bigsqcup_{m=1}^{\lceil\frac{k_1}{k_2}\rceil-1} S_{2m,\grt},$$ 
the Sasaki cone has dimension two and  is determined by 
$$\grk(\cald_{k_1,k_2},J_\grt)=\{a\xi_{k_1,k_2}+bX_{2m}~|~a+k_2b\eta_2(X_{2m})>0\},$$
where $\eta_2$ is the standard contact form on $S^3$. 
\item If $J\in A_{0,\grt}$ the Sasaki cone $\grk(\cald_{k_1,k_2},J)$ has dimension one consisting only of the ray of the Reeb vector field $\12$.
\end{enumerate}
\end{lemma}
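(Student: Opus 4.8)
The plan is to compute the maximal torus of Reeb type inside $\gC\gR(\cald_{k_1,k_2},J_\grt)$ and then carve out the positive cone by the pointwise condition $\eta_{k_1,k_2}(\,\cdot\,)>0$. The organizing principle is a descent argument for the regular Boothby--Wang fibration $\pi:M^5_{k_1,k_2}\ra{1.5}T^2\times S^2$. If $\gT$ is a maximal torus of Reeb type then $\xi_{k_1,k_2}\in\gt$, every element of $\gT$ commutes with $\xi_{k_1,k_2}$, and hence $\gT$ descends to a torus of holomorphic isometries of $(T^2\times S^2,\gro_{k_1,k_2},J_\grt)$. Since a torus of Reeb type lies in $\gC\go\gn(M^5_{k_1,k_2},\eta_{k_1,k_2})$, each generator $Y$ satisfies $\pounds_Y\eta_{k_1,k_2}=0$, so $Y\hook d\eta_{k_1,k_2}=-d(\eta_{k_1,k_2}(Y))$ and the descended field is automatically \emph{Hamiltonian}. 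Conversely, Lemma \ref{liftham} lifts every Hamiltonian Killing field of the base to an element of $\ga\gu\gt(\cals)$, and these lifts commute with one another and with $\xi_{k_1,k_2}$. I would package this as the identity
\[
\dim\grk(\cald_{k_1,k_2},J_\grt)=1+\dim\gT_{\mathrm{Ham}},
\]
where $\gT_{\mathrm{Ham}}$ is a maximal torus of the Hamiltonian isometry group of the base. The one subtlety to flag is that the symplectic but \emph{non}-Hamiltonian translations of the $T^2$ factor must not enlarge $\gT$; this is exactly what the forced Hamiltonicity above rules out.

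With this reduction both dimension statements are immediate. For $J_\grt\in\mathcal{S}_{0,\grt}\sqcup\bigsqcup_{m\geq1}S_{2m,\grt}$, Lemma \ref{fixHam} produces the Hamiltonian circle $\cala_{2m}$, while the fact recalled just before Theorem \ref{conjmaxtori}---that $\gH\ga\gm(T^2\times S^2,\gro_{k_1,k_2})$ contains no torus of dimension greater than one---forces $\dim\gT_{\mathrm{Ham}}=1$. Hence $\dim\grk=2$ and $\gt_2=\mathrm{span}(\xi_{k_1,k_2},X_{2m})$, where $X_{2m}$ is the lift of $\cala_{2m}$. For $J\in A_{0,\grt}$, Lemma \ref{A0lem} asserts that there are no non-trivial Hamiltonian Killing vector fields, so $\dim\gT_{\mathrm{Ham}}=0$ and $\dim\grk=1$; the cone is then $\{a\xi_{k_1,k_2}\mid a>0\}$, which is item (2).

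It remains to describe the two-dimensional cone. The unreduced Sasaki cone is the set of $a\xi_{k_1,k_2}+bX_{2m}$ with $\eta_{k_1,k_2}(a\xi_{k_1,k_2}+bX_{2m})>0$ at every point of $M^5_{k_1,k_2}$. Since $\xi_{k_1,k_2}$ is the Reeb field, $\eta_{k_1,k_2}(\xi_{k_1,k_2})\equiv1$. Writing the lift in the representative form $X_{2m}=\cX_{2m}^h+\pi^*H_{2m}\,\xi_{k_1,k_2}$ of Lemma \ref{liftham}, with $\cX_{2m}$ the base Hamiltonian field and $H_{2m}$ its Hamiltonian, and using that the horizontal lift $\cX_{2m}^h$ lies in $\ker\eta_{k_1,k_2}$, I obtain $\eta_{k_1,k_2}(X_{2m})=H_{2m}$, so that
\[
\eta_{k_1,k_2}(a\xi_{k_1,k_2}+bX_{2m})=a+b\,H_{2m}.
\]
The remaining point is the normalization $H_{2m}=k_2\,\eta_2(X_{2m})$: in the join description $M^5_{k_1,k_2}=(\caln^3\times S^3)/S^1(k_1,k_2)$ the contact form restricts as a combination in which the $S^2$-fiber direction is weighted by $k_2$ (matching the $k_2\gro_2$ summand of $\gro_{k_1,k_2}$), so the base Hamiltonian equals $k_2$ times the value of the standard contact form $\eta_2$ along the $S^3$-component of $X_{2m}$. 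Substituting gives the displayed defining inequality, and identifying $\gt^+_2(\cald_{k_1,k_2},J_\grt)$ with $\grk(\cald_{k_1,k_2},J_\grt)$ completes item (1).

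I expect the main obstacle to be precisely this last normalization: producing the factor $k_2$ and the correct moment-map range honestly. The clean statement $\eta_{k_1,k_2}(X_{2m})=H_{2m}$ is formal, but converting $H_{2m}$ into $k_2\,\eta_2(X_{2m})$ requires an explicit calculation of the lift in the join coordinates, because for $m>0$ the field $X_{2m}$ also carries a $\caln^3$-twist coming from the degree-$2m$ line bundle $L$. The key observation that makes this tractable is that on $M^5_{k_1,k_2}$ one has $\xi_1=k_1\xi_{k_1,k_2}$ modulo the fibre direction $k_2\xi_1-k_1\xi_2$, so that the ambiguity of the lift modulo $\xi_{k_1,k_2}$ absorbs the Reeb-parallel part of the twist; once one checks that the residual twist contributes nothing to $\eta_{k_1,k_2}$, the uniform formula $\eta_{k_1,k_2}(X_{2m})=k_2\,\eta_2(X_{2m})$ follows and the cone is pinned down by the range of $\eta_2(X_{2m})$ at the two fixed sections $E_n$ and $E_\infty$.
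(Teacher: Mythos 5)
Your proposal is correct and takes essentially the same route as the paper's proof: the Hamiltonian circle on the base is lifted by Lemma \ref{liftham}, the non-split case is handled by Lemma \ref{A0lem}, and the defining inequality comes from evaluating the join 1-form $\eta_{k_1,k_2}=k_1(dz-ydx)+k_2\eta_2$, with the normalization $\eta_{k_1,k_2}(X_{2m})=k_2\eta_2(X_{2m})$ that you flag settled exactly as you suggest, by representing the lift tangent to the $S^3$ factor (the paper later identifies it with $H_2$ modulo the ideal $\cali_L$). Your explicit descent argument---that any torus of Reeb type pushes down to a torus of Hamiltonian holomorphic isometries, so non-Hamiltonian translations of the $T^2$ factor cannot enlarge it---makes precise the maximality step the paper leaves implicit in its appeal to the fact that $\gH\ga\gm(T^2\times S^2,\gro_{k_1,k_2})$ contains no torus of dimension greater than one.
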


\begin{proof} Applying the Boothby-Wang construction to the symplectic manifold $(T^2\times S^2,\gro_{k_1,k_2})$ gives 
$M^5_{k_1,k_2}$ as the total space of a principal $S^1$ bundle over the symplectic manifold $(T^2\times S^2,\gro_{k_1,k_2})$. Moreover, a choice of connection 1-form $\eta_{k_1,k_2}$ in this principal bundle such that $d\eta_{k_1,k_2}=\pi^*\gro_{k_1,k_2}$ where $\pi:M^5_{k_1,k_2}\ra{1.5} T^2\times S^2$ is natural projection defines a contact structure $\cald_{k_1,k_2}=\ker\eta_{k_1,k_2}$ on $M^5_{k_1,k_2}$. Letting $\xi_{k_1,k_2}$ be the fundamental vertical vector field on $M^5_{k_1,k_2}$ corresponding to the element $1\in \bbr$ identified as the Lie algebra of $S^1$ gives the Reeb vector field of $\eta_{k_1,k_2}$. From our construction in Section \ref{secdiff} we have a commutative diagram
\begin{equation}\label{s1comdia}
\begin{matrix}  \caln^3\times S^3 &&& \\
                          &\searrow && \\
                          \decdnar{} && M^5_{k_1,k_2} &\\
                          & \swarrow && \\
                          T^2\times S^2 &&&,
\end{matrix}
\end{equation}
where the vertical arrow is the natural $T^2$-bundle projection map generated by the vector fields
\begin{equation}\label{Leqn}
L=\frac{1}{2k_1}\d_z-\frac{1}{2k_2}\xi_2,~\qquad \xi_{k_1,k_2}=\frac{1}{2k_1}\d_z+\frac{1}{2k_2}\xi_2.
\end{equation}
Here $\xi_2$ is the Reeb vector field of $\eta_2$ the standard contact form on $S^3$. The vector field $L$ generates the circle action of the southeast arrow, and $\xi_{k_1,k_2}$ generates the circle action of the southwest arrow, and it is the Reeb vector field of $\eta_{k_1,k_2}$. Note that on $\caln^3\times S^3$ the 1-form $\eta_{k_1,k_2}$ takes the form
\begin{equation}\label{1form}
\eta_{k_1,k_2}=k_1(dz-ydx) +k_2\eta_2.
\end{equation}

Now choose a compatible complex structure $\cJ_\grt$ on $T^2\times S^2$ as described in Section 4. We lift this to a complex structure $J_\grt$ in the contact bundle $\cald_{k_1,k_2}$. Since $(\gro_{k_1,k_2},\cJ_\grt)$ is K\"ahler, the lifted structure $\cals_{k_1,k_2}=(\12,\eta_{k_1,k_2},\Phi_\grt,g)$, where $\Phi_\grt$ extends $J_\grt$ by setting $\Phi_\grt\xi_{k_1,k_2}=0$, is Sasakian. If $\cX$ is a Hamiltonian Killing vector field that is holomorphic with respect to $\cJ_\grt$ then by Lemma \ref{liftham} it lifts to an infinitesimal automorphism $X$ of the Sasakian structure $\cals$.
If $J_\grt$ is in ${\mathcal S}_{0,\grt}$ or $S_{2m,\grt}$ for $m=1,\cdots,\lceil\frac{k_1}{k_2}\rceil-1$, then $X=X_{2m}$ is the induced Hamiltonian Killing vector field on $\m5$. Since the Reeb vector field is in the center of $\ga\gu\gt(\cals)$, the Sasaki cone $\grk(\cald_{k_1,k_2},J_\grt)$ has dimension two and is determined by
$$0<\eta_{k_1,k_2}(a\xi_{k_1,k_2}+bX_{2m})=\frac{a}{2}+\frac{a}{2}+b\eta_{k_1,k_2}(X_{2m})=a+k_2b\eta_2(X_{2m}).$$
This proves (1). 

For item (2) we see that Lemma \ref{A0lem} says that the complex structure $A_{0,\grt}$ has no Hamiltonian Killing vector fields. Thus, $M^5_{k_1,k_2}$ with this complex structure has no induced Hamiltonian circle action, and it follows that for $J\in A_{0,\grt}$ the Sasaki cone $\grk(\cald_{k_1,k_2},J)$ has dimension one.
\end{proof}

The Hamiltonian Killing vector fields referred to in this lemma are induced from the vector fields $H_i, i=1,2$ on $S^3$ that generate a maximal torus in the automorphism group $U(2)$ of the standard Sasakian structure on $S^3$. For future use we give these in terms of the standard coordinates on $\bbc^2$ 
\begin{equation}\label{Hieqn}
H_i=i(z_j\d_{z_j}-\bar{z}_j\d_{\bar{z}_j}).
\end{equation}

\section{Extremal Sasakian Structures}\label{extsassec}

The main result in this section involves lifting our extremal K\"ahler metrics to extremal Sasaki metrics via the Boothby-Wang construction. We then deform in the Sasaki cone to obtain quasiregular Sasakian structures which project to K\"ahler orbifolds which in turn we show have extremal representatives. Then the Openess Theorem of \cite{BGS06} shows that extremal structures exhaust the entire Sasaki cone. As in \cite{Boy10a,Boy10b} this will give rise to bouquets of extremal Sasakian structures. 

Given a Sasakian structure $\cals=(\xi,\eta,\Phi,g)$ on a compact manifold $M^{2n+1}$ we deform the contact 1-form by $\eta\mapsto \eta(t)=\eta+t\grz$ where $\grz$ is a basic 1-form with respect to the characteristic foliation $\calf_\xi$ defined by the Reeb vector field $\xi.$ Here $t$ lies in a suitable interval containing $0$ and such that $\eta(t)\wedge d\eta(t)\neq 0$. This gives rise to a family of Sasakian structures $\cals(t)=(\xi,\eta(t),\Phi(t),g(t))$ that we denote by ${\mathcal S}(\xi, \bar{J})$ where $\bar{J}$ is the induced complex structure on the normal bundle $\nu(\calf_\xi)=TM/L_\xi$ to the Reeb foliation $\calf_\xi$ which satisfy the initial condition $\cals(0)=\cals$. On the space ${\mathcal S}(\xi, \bar{J})$  we consider the ``energy functional'' $E:{\mathcal S}(\xi,\bar{J})\ra{1.4} \bbr$ defined by
\begin{equation}\label{var}
E(g) ={\displaystyle \int _M s_g ^2 d{\mu}_g ,}\, 
\end{equation}
i.e. the $L^2$-norm squared of the scalar curvature $s_g$ of the Sasaki metric $g$. Critical points $g$ of this functional are called {\it extremal Sasakian metrics}.  Similar to the K\"ahlerian case, the Euler-Lagrange equations for this functional says \cite{BGS06} that $g$ is critical if and only if the gradient vector field $J{\rm grad}_gs_g$ is transversely holomorphic, so, in particular, Sasakian metrics with constant scalar curvature are extremal. Since the scalar curvature $s_g$ is related to the transverse scalar curvature $s^T_g$ of the transverse K\"ahler metric by $s_g=s_g^T-2n$, a Sasaki metric is extremal if and only if its transverse K\"ahler metric is extremal. Hence, in the regular (quasi-regular) case, an extremal K\"ahler metric lifts to an extremal Sasaki metric, and conversely an extremal Sasaki metric projects to an extremal K\"ahler metric. 

Note that the deformation $\eta\mapsto \eta(t)=\eta+t\grz$ not only deforms the contact form, but also deforms the contact structure $\cald$ to an equivalent (isotopic) contact structure. So when we say that a contact structure $\cald$ has an extremal representative, we mean so up to isotopy. Deforming the K\"ahler form within its K\"ahler class corresponds to deforming the contact structure within its isotopy class. It is convenient to restrict the class of isotopies. Let $N(\gT)$ denote the normalizer of the maximal torus $\gT$ in $\gC\gr(\cald,J)$, and let ${\mathcal S}^{N(\gT)}(\xi, \bar{J})$ denote the subset of ${\mathcal S}(\xi, \bar{J})$ such that the basic 1-form $\grz$ is invariant under $N(\gT)$. We refer to an isotopy obtained by a deformation of the contact structure with $\grz\in {\mathcal S}^{N(\gT)}(\xi, \bar{J})$ as an {\it $N(\gT)$-isotopy} and we denote the $N(\gT)$-isotopy class of such contact structures by $\bar{\cald}$. As in Lemma 2.3 of \cite{Boy10b} it follows from a theorem of Calabi \cite{Cal85} that any extremal representative will lie in ${\mathcal S}^{N(\gT)}(\xi, \bar{J})$. Notice also that under a choice of isomorphism $\nu(\calf_\xi)\approx \cald$, $\bar{J}$ is isomorphic to $J$. Next we have

\begin{lemma}\label{sasconelem}
The Sasaki cone $\grk(\cald,J)$ depends only on the $N(\gT)$-isotopy class $\bar{\cald}$.
\end{lemma}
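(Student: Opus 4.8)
The plan is to realize any $N(\gT)$-isotopy by a family of \emph{CR isomorphisms} that are equivariant for the action of $N(\gT)$, and then to observe that such maps transport the entire data defining the Sasaki cone. Recall that $\grk(\cald,J)=\gt_k^+(\cald,J)/\calw(\cald,J)$, where $\gT=\gT_k$ is a fixed maximal torus of Reeb type in $\gC\gr(\cald,J)$. Fix a representative $\grz\in{\mathcal S}^{N(\gT)}(\xi,\bar J)$, giving the deformation $\eta(t)=\eta+t\grz$, with $\cald(t)=\ker\eta(t)$ and $J(t):=\Phi(t)|_{\cald(t)}$, while the transverse complex structure $\bar J$ on $\nu(\calf_\xi)$ is held fixed along the family $\cals(t)=(\xi,\eta(t),\Phi(t),g(t))$. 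It suffices to produce, for each $t$, an $N(\gT)$-equivariant CR isomorphism $\phi_t\colon(\cald,J)\to(\cald(t),J(t))$: conjugation by $\phi_t$ will then identify $\gC\gr(\cald,J)$ with $\gC\gr(\cald(t),J(t))$ compatibly with the data used to form the two cones.

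First I would record two elementary facts. Every element of $\gT$ fixes $\xi$ (the torus is abelian with $\xi$ in its Lie algebra) and preserves $\cald$, hence preserves $\eta$; since $\grz$ is $N(\gT)$-invariant, $\gT$ therefore preserves each $\eta(t)$. Moreover $\grz$ is basic, so $\xi\hook\grz=0$ and $\xi\hook d\grz=0$, whence $\eta(t)(\xi)=1$ and $\xi\hook d\eta(t)=0$; thus $\xi$ remains the Reeb vector field of $\eta(t)$ for every $t$, and $\gT$ is again a torus of Reeb type in $\gC\gr(\cald(t),J(t))$.

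Next I would run an equivariant Gray--Moser stability argument. Because $\dot\eta(t)=\grz$ and the forms $\eta(t)$ are all $N(\gT)$-invariant, the time-dependent vector field $X_t$ generating the stability isotopy, i.e.\ solving $\pounds_{X_t}\eta(t)+\dot\eta(t)=\mu_t\,\eta(t)$ for a suitable function $\mu_t$, can be chosen $N(\gT)$-invariant; hence its flow $\phi_t$ is $N(\gT)$-equivariant and satisfies $\phi_t^*\eta(t)=f_t\,\eta$ with $f_t>0$, so $\phi_t^*\cald(t)=\cald$. Since $\xi\in\gt_k$ and $\phi_t$ commutes with $\gT$, it commutes with the $\xi$-flow and so descends to the identity on $(\nu(\calf_\xi),\bar J)$; as $J$ and $J(t)$ are the horizontal lifts of the fixed transverse structure $\bar J$ to $\cald$ and to $\cald(t)$, this forces $\phi_t^*J(t)=J$. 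Thus $\phi_t$ is the desired $N(\gT)$-equivariant CR isomorphism.

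Finally, conjugation by $\phi_t$ is an isomorphism $\gC\gr(\cald,J)\to\gC\gr(\cald(t),J(t))$ which, by equivariance, fixes $\gT$ and $N(\gT)$; since $\gT$ is maximal in the source it stays maximal in the target, and the Weyl group $\calw$ is carried along. For $X\in\gt_k$, the relation $\phi_t^*\eta(t)=f_t\,\eta$ together with $(\phi_t)_*X=X$ yields $[\eta(t)(X)]\circ\phi_t=f_t\,\eta(X)$, so $\eta(t)(X)>0$ everywhere if and only if $\eta(X)>0$ everywhere; hence $\gt_k^+(\cald(t),J(t))=\gt_k^+(\cald,J)$ as subsets of $\gt_k$, and with the Weyl groups coinciding we get $\grk(\cald(t),J(t))=\grk(\cald,J)$. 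I expect the main obstacle to be exactly the middle step: arranging the stability isotopy to be genuinely $N(\gT)$-equivariant and, crucially, a CR isomorphism rather than a mere (conformal) contactomorphism. This is precisely where the hypothesis $\grz\in{\mathcal S}^{N(\gT)}(\xi,\bar J)$ and the fact that the whole family shares the Reeb field $\xi$ and the transverse holomorphic structure $\bar J$ enter.
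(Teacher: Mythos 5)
There is a genuine gap, and it sits exactly where you yourself flagged the main obstacle: the assertion that the equivariant Gray--Moser flow $\phi_t$ ``descends to the identity on $(\nu(\calf_\xi),\bar J)$'' and therefore satisfies $\phi_t^*J(t)=J$. Commuting with the $\xi$-flow only makes $\phi_t$ \emph{foliate}, so it descends to \emph{some} transverse map; that map is the flow of the transverse component of $X_t$, and $X_t$ is pinned down by $X_t\hook d\eta(t)=-\grz$ on $\cald(t)$, i.e.\ it is the transverse Moser field dual to $\grz$ via the transverse K\"ahler form. For a generic $N(\gT)$-invariant basic $\grz$ this field is neither zero nor transversally holomorphic, so the induced map $\bar\phi_t$ does not preserve $\bar J$, and $\phi_t^*J(t)$ is in general a different CR structure on $\cald$ (transversally biholomorphic to $J$, but not equal to it). This is the same phenomenon as in the K\"ahler Moser theorem: one gets $\phi^*\omega_1=\omega_0$, but not $\phi^*J=J$. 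Consequently the conjugation step of your plan --- the identification of $\gC\gr(\cald,J)$ with $\gC\gr(\cald(t),J(t))$, the persistence of maximality of $\gT$, and the transport of the Weyl group --- is unsupported.

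What does survive is the contact-level half of your argument: since $\grz$ is basic, $\mu_t=\grz(\xi)=0$, so in fact $\phi_t^*\eta(t)=\eta$ exactly (your $f_t\equiv 1$), and $(\phi_t)_*X=X$ for $X\in\gt$ gives $\eta(t)(X)\circ\phi_t=\eta(X)$; this correctly shows the unreduced cones agree as subsets of $\gt$, and is actually more detailed on that point than the paper. But the Sasaki cone also requires that $\gT$ remain a \emph{maximal} torus of Reeb type for $(\cald(t),J(t))$ and that the Weyl group be unchanged, and your route to these facts was the (failed) CR isomorphism. The paper gets everything directly, with no Moser argument at all: since the deformation is $N(\gT)$-invariant, the Lie algebras of the two maximal tori coincide (being maximal and invariant under $N(\gT)$), hence the unreduced cones $\gt^+$ and $(\gt')^+$ coincide, and all vector fields in them are invariant under $\calw=N(\gT)/\gT$, so the reduced cones $\grk(\cald,J)$ and $\grk(\cald',J')$ coincide. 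To repair your proof, replace the CR-isomorphism step by this direct invariance argument; proving transverse holomorphy of the Moser field would require extra hypotheses on $\grz$ that the lemma does not assume.
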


\begin{proof}
Let $(\cald,J)$ and $(\cald',J')$ be connected by an $N(\gT)$-isotopy. Then the Lie algebras $\gt$ and $\gt'$ actually coincide since they are maximal and invariant under $N(\gT)$. Thus, the unreduced Sasaki cones $\gt^+$ and $(\gt')^+$ coincide. But all vector fields in these cones are also invariant under the Weyl group $\calw=N(\gT)/\gT$. Thus, the reduced Sasaki cones $\grk(\cald,J)$ and $\grk(\cald',J')$ coincide.
\end{proof}

This lemma allows us to say that a contact structure $\cald$ has an extremal representative by which we mean that it is $N(\gT)$-isotopic to a contact structure with a compatible extremal Sasakian metric.
Notice also that under a transverse homothety extremal Sasakian structures stay extremal, and a transverse homothety of a CSC Sasakian structure is also a CSC Sasakian structure. This is because under the transverse homothety $\cals\mapsto \cals_a$ the scalar curvature of the metric $g_a$  is given by (cf. \cite{BG05}, page 228) $s_{g_a}=a^{-1}(s_g+2n)-2n$. 

In the case where the complex structure $J$ comes from ${\mathcal S}_{0,\grt}\sqcup_mS_{2m,\grt}$, we see from Lemma \ref{sasconeprop} that for any element of the Sasaki cone $\grk(\cald_{k_1,k_2},J)$ we must have $0<a+bk_2|z_2|^2$. In this case the Sasaki cone is determined by the conditions 
\begin{equation}\label{prodcond}
a>0,\qquad  a+bk_2>0.
\end{equation}

Similar to Section \ref{comsurf}, let $S_n$ denote ${\mathbb P}({\mathcal O} \oplus L) \rightarrow T^2$, where $L$ is a  holomorphic line bundle on $T^2$ of  degree $n$. We now have
\begin{lemma}\label{quotientlemma}
If $a$ and $b$ are integers with $b\neq 0$ satisfying conditions (\ref{prodcond}), the quotient of $(M^5_{k_1,k_2},J)$ by the circle action generated by the Reeb vector field $R_{ab}=a\12+bH_2$ is 
$(S_{n},\grD_{pq})$ with branch divisor
$$\grD_{pq}=(1-\frac{1}{p})E_{n} +(1-\frac{1}{q})E_\infty,$$
where $p=a$,  $q=a+k_2b$, and $n$ is some integer determined by  $J$, $p$, $q$, $k_1$ and $k_2$.
\end{lemma}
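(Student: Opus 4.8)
The plan is to identify the quotient of $(M^5_{k_1,k_2},J)$ by the circle action generated by $R_{ab}=a\xi_{k_1,k_2}+bH_2$ explicitly, using the commutative diagram \eqref{s1comdia} together with the fact that $M^5_{k_1,k_2}$ sits inside $\caln^3\times S^3$ as a $T^2$-quotient. The key observation is that we have two commuting circle actions available on $\caln^3 \times S^3$: one generated by $\xi_{k_1,k_2}$ (whose quotient is $T^2\times S^2$), and the $H_2$-action coming from the second factor of the standard $U(2)$-action on $S^3$. The Reeb field $R_{ab}$ is a rational combination of these, so quotienting $M^5_{k_1,k_2}$ by $R_{ab}$ is the same as quotienting $\caln^3\times S^3$ by a two-dimensional torus spanned by $L$ (or equivalently $\xi_{k_1,k_2}$) and $R_{ab}$, and then identifying the resulting complex orbifold.

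\textbf{First} I would write the lattice computation. On $\caln^3\times S^3$ the relevant torus is generated by $\xi_{k_1,k_2}=\frac{1}{2k_1}\partial_z+\frac{1}{2k_2}\xi_2$ from \eqref{Leqn}, together with the $H_2$-direction on the $S^3$ factor. Since $R_{ab}=a\xi_{k_1,k_2}+bH_2$, I would compute the sublattice of the integral lattice generated by $R_{ab}$ and $L$ inside the rank-two torus acting on $\caln^3\times S^3$, and determine the two special orbits where the quotient circle action degenerates. These degenerate orbits occur precisely over the fixed-point loci $E_n=\zeta^{-1}(1)$ and $E_\infty=\zeta^{-1}(-1)$ of the Hamiltonian $S^1$-action $\cala_{2m}$ from Section \ref{hamS1}, since $H_2$ restricts there to a rotation with fixed isotropy.

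\textbf{Second} I would compute the isotropy orders. Over $E_n$ the action on the normal slice has a cyclic isotropy group whose order, by a Smith-normal-form computation on the generators, equals $|a|=p$; over $E_\infty$ the combination $a\xi_{k_1,k_2}+bH_2$ restricts (using $\theta(K)=1$ and $\eta_2(H_2)=|z_2|^2$, which equals $1$ on $E_\infty$) to give isotropy of order $a+k_2 b=q$. This produces exactly the branch divisor $\grD_{pq}=(1-\tfrac1p)E_n+(1-\tfrac1q)E_\infty$ claimed. The underlying smooth complex surface of the quotient is a ruled surface $\bbp(\calo\oplus L')\to T^2$ for some line bundle $L'$; its degree $n$ is then pinned down by the requirement that the Euler class of the Boothby-Wang bundle match the data, i.e. by a transgression/Chern-class argument along the lines of Lemma \ref{Chernclass}, giving $n$ as the stated function of $J,p,q,k_1,k_2$.

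\textbf{The main obstacle} I expect is the bookkeeping to determine the precise integer $n$ and to verify that the quotient is genuinely $S_n$ (a ruled surface of the admissible Calabi type) rather than some other orbifold. This requires carefully tracking how the zero-section self-intersection transforms under the weighted quotient: one must relate the degree of the transverse holomorphic line bundle $L$ downstairs to the integers $p,q,k_1,k_2$ through the Euler classes of both circle factors. Concretely, the degree $n$ is governed by how the first Chern class $c_1(\cald_{k_1,k_2})=2k_1\grg$ of Lemma \ref{Chernclass} distributes over the two sections after the reparametrization $R_{ab}$, and establishing that $(S_n,\grD_{pq})$ carries the Calabi-type admissible structure of Section \ref{exKmet}—so that the extremality machinery applies—is where the real content lies. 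The positivity conditions \eqref{prodcond}, namely $p=a>0$ and $q=a+k_2b>0$, guarantee that both branch orders are genuine positive integers and hence that the quotient is a well-defined Kähler orbifold.
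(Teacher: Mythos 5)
Your proposal takes essentially the same route as the paper: the paper also realizes the quotient as $\caln^3\times S^3$ modulo the 2-torus spanned by $L$ and $R_{ab}$, observes that the result fibers over $T^2$, and identifies the fibers as weighted projective lines $\bbc\bbp(p,q)$ via the identity $R_{ab}\equiv\frac{1}{k_2}(pH_1+qH_2)\bmod \cali_L$, which gives isotropy $\bbz_p$ along $E_n$ and $\bbz_q$ along $E_\infty$ exactly as in your isotropy computation (and, like you, it leaves $n$ undetermined, deferring its computation to the sequel). One minor correction: your parenthetical ``(or equivalently $\xi_{k_1,k_2}$)'' is false, since ${\rm Span}\{L,R_{ab}\}\neq{\rm Span}\{\xi_{k_1,k_2},R_{ab}\}$ — the torus must contain $L$, whose quotient is $M^5_{k_1,k_2}$ — but your argument elsewhere correctly uses $L$, so nothing breaks.
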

\begin{proof}
We want to identify the quotient of $\caln\times S^3$ by the 2-torus generated by the vector fields $L$ and $R_{ab}$ given in coordinates by
\begin{equation}\label{Leqn2}
L=\frac{1}{2k_1}\d_z-\frac{1}{2k_2}\xi_2,~\qquad R_{ab}=\frac{a}{2k_1}\d_z+\frac{a}{2k_2}H_1+\frac{a+2k_2b}{2k_2}H_2
\end{equation}
where $H_i$ is given by Equation (\ref{Hieqn}). 

The quotient by the circle generated by $L$ is $M^5_{k_1,k_2}$. So we have the commutative diagram
\begin{equation}\label{s1comdia2}
\begin{matrix}  \caln^3\times S^3 &&& \\
                          &\searrow && \\
                          \decdnar{} && M^5_{k_1,k_2} &\\
                          & \swarrow && \\
                          B_{a,b}&&&
\end{matrix}
\end{equation}
where by \cite{BG00a} $B_{a,b}$ is a projective algebraic orbifold, and the southwest arrow is the projection of the $S^1$-orbibundle generated by $R_{ab}$. For $(x,y,z;z_1,z_2)\in \caln\times S^3$ we let $[x,y,z;z_1,z_2]$ denote the corresponding class in $B_{a,b}$ under the $T^2$ action. If $(x',y',z';z'_1,z'_2)$ is another point of $\caln\times S^3$ representing the same class, Equation (\ref{Leqn2}) implies that $x'=x$ and $y'=y$. Thus, we have a natural projection $\grr:B_{a,b}\ra{1.6} T^2$. Moreover, one easily sees that 
\begin{equation}\label{RabmodL}
R_{ab}\equiv\frac{1}{k_2}(pH_1+qH_2) \mod \cali_L
\end{equation}
where $\cali_L$ denotes the ideal generated by $L$. So the fibers of $\grr$ are identified with the quotient of $S^3$ by the corresponding weighted circle action generated by $R_{ab}$ on $M^5_{k_1,k_2}$, namely the weighted projective line $\bbc\bbp(p,q)$. Moreover, the orbifold singular locus comes from the two points $[1,0]$ and $[0,1]$ in the fibers  $\bbc\bbp(p,q)$  with isotropy $\bbz_p$ and $\bbz_q$, respectively. The former is identified with zero section $E_n$ and the latter with the infinity section $E_\infty$. Thus, the orbifold $B_{a,b}$ can be represented as a weighted projectization of a rank two vector bundle $\calo\oplus L$ fibering over $T^2$ with fibers $\bbc\bbp(p,q)$  where the degree of $L$ is an integer $n$, that is, $B_{a,b}$ is identified with the $nth$ orbifold pseudo-Hirzebruch surface $B_{a,b}=(S_n,\grD_{pq})$.
\end{proof}

\begin{remark}
One can actually determine $n$ in terms of the integers $p,q,k_1,k_2$, but we do not need it here. We shall do so in the sequel \cite{BoTo12} at least in the case that $k_2=1$.
\end{remark}

Since all the complex structures in ${\mathcal S}_{0,\grt}\sqcup_mS_{2m,\grt}$ admit extremal K\"ahler metrics in every K\"ahler class, we
know that the regular Sasakian structures also admit extremal Sasaki metrics, and only for complex structures in ${\mathcal S}_{0,\grt}$ do we get CSC Sasaki metrics. We also know from \cite{BuBa88,Fuj92} that the one dimensional Sasaki cone associated to the non-split complex structure admits no extremal Sasaki metric. Thus, in order to complete the proof of Theorem \ref{mainthm}  we need to further consider the complex structures in ${\mathcal S}_{0,\grt}$ and $S_{2m,\grt}$ for $m=1,\cdots, \lceil\frac{k_1}{k_2}\rceil-1$ for the non-regular elements of the Sasaki cones. Essentially we need to establish
\begin{theorem}\label{existence2DSasakicone}
For the transverse complex structure \newline
$J\in {\mathcal S}_{0,\grt}\sqcup_{m=1,\cdots,\lceil\frac{k_1}{k_2}\rceil-1}S_{2m,\grt}$ on $M^5_{k_1,k_2}$ every member of the Sasaki cone $\grk(\cald_{k_1,k_2},J)$ admits extremal Sasaki metrics.
\end{theorem}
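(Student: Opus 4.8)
The plan is to reduce Sasaki-extremality to transverse K\"ahler-extremality and then to produce the transverse extremal metric explicitly, ray by ray. Since a Sasaki metric is extremal if and only if its transverse K\"ahler metric is extremal (recall $s_g = s_g^T - 2n$), and since by Remark \ref{sasconerem} each Reeb vector field in $\grk(\cald_{k_1,k_2},J)$ determines a unique Sasakian structure, it suffices to exhibit an extremal transverse K\"ahler metric for every ray of the cone. First I would dispose of the regular ray generated by $\xi_{k_1,k_2}$: by Lemma \ref{sasconeprop} its quotient is $(T^2\times S^2,\gro_{k_1,k_2},J)$ with $J\in{\mathcal S}_{0,\grt}\sqcup_m S_{2m,\grt}$, and Section \ref{exKmet} already furnishes an extremal K\"ahler metric in the class $\alpha_{k_1,k_2}$ --- constant scalar curvature when $J\in{\mathcal S}_{0,\grt}$, and a non-CSC Calabi-type metric when $J=S_{2m,\grt}$ (the K\"ahler bound $k_1>mk_2$ being guaranteed by $m<\frac{k_1}{k_2}$). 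This lifts via Boothby--Wang to an extremal Sasaki metric on the regular ray.

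Next I would treat the dense set of quasi-regular rays. For integers $a,b$ with $b\neq 0$ satisfying \eqref{prodcond}, Lemma \ref{quotientlemma} identifies the quotient of $(M^5_{k_1,k_2},J)$ by the circle generated by $R_{ab}=a\xi_{k_1,k_2}+bH_2$ with the orbifold $(S_n,\grD_{pq})$, where $p=a$ and $q=a+k_2b$. On this orbifold I would run the admissible (Calabi-type) construction of \cite{ACGT08}, adapted to the orbifold setting: the transverse metric takes the form \eqref{metric}, with the flat zero-scalar-curvature metric on the base $T^2$ and a momentum profile $\Theta(\gz)$ on $(-1,1)$, except that the smooth boundary condition $\Theta'(\pm 1)=\mp 2$ of \eqref{positivity} is replaced by the orbifold conditions $\Theta'(1)=-\tfrac{2}{p}$ and $\Theta'(-1)=\tfrac{2}{q}$, encoding the $\bbz_p$ and $\bbz_q$ isotropy along the sections $E_n=\gz^{-1}(1)$ and $E_\infty=\gz^{-1}(-1)$. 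Integrating the extremal Euler--Lagrange equation once more yields an explicit rational profile $\Theta_{p,q}(\gz)$, the orbifold analogue of the polynomial displayed in Section \ref{exKmet}.

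The crux, and the step I expect to be the main obstacle, is to verify the positivity requirement $\Theta_{p,q}(\gz)>0$ for $-1<\gz<1$ of \eqref{positivity}, \emph{uniformly} over the range of orbifold data $(p,q)$ arising from the cone. Once positivity holds, $\Theta_{p,q}$ defines a genuine extremal orbifold K\"ahler metric in the relevant class, which lifts to an extremal Sasaki metric on the quasi-regular ray $R_{ab}$. The key point for exhaustion is that the profile $\Theta_{p,q}$, being defined transversally on $M^5_{k_1,k_2}$ rather than through a global quotient, in fact makes sense for \emph{every} Reeb field in the cone once $p,q$ are taken to be the corresponding positive reals; hence establishing positivity on the full real parameter range produces extremal Sasaki metrics on all rays, quasi-regular or not. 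In the degenerate case $J\in{\mathcal S}_{0,\grt}$ the self-intersection is $n=0$, the construction reduces to the local product of the flat $T^2$ metric with a constant-curvature fibre metric, and the profiles give CSC representatives throughout.

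Finally I would invoke the Openness Theorem of \cite{BGS06} to organise the conclusion: the subset of $\grk(\cald_{k_1,k_2},J)$ admitting a compatible extremal Sasaki metric is open, so the extremal rays constructed above, which are dense and (by the explicit profile $\Theta_{p,q}$) in fact fill the cone, exhaust the connected Sasaki cone. Thus the only genuinely new analytic input beyond \cite{ACGT08} and \cite{Fuj92} is the orbifold positivity of $\Theta_{p,q}$ over the full parameter range, which I anticipate being the heart of the argument.
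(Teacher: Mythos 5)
Your proposal follows essentially the same route as the paper's proof: reduce Sasaki extremality to transverse K\"ahler extremality, lift the extremal metrics of Section \ref{exKmet} on the regular ray, identify the quasi-regular quotients as the orbifolds $(S_n,\grD_{pq})$ via Lemma \ref{quotientlemma}, run the admissible construction of \cite{ACGT08} with the smooth compactification conditions of \eqref{positivity} replaced by orbifold cone-angle conditions (the paper's \eqref{conesingpositivity}; your labeling of which of $p,q$ sits at $\gz=\pm 1$ is the opposite of the paper's, but since $(p,q)$ runs over all coprime pairs this is immaterial), and conclude with the Openness Theorem of \cite{BGS06}. You correctly isolate positivity of the extremal profile over the whole parameter range as the key verification; the paper does exactly this, exhibiting $F(\gz)=\Theta(\gz)(1+r\gz)$ as the explicit quartic \eqref{orbiextremalpolynomial} and checking \eqref{conesingpositivity} for all $r\in(0,1)$, which up to rescaling exhausts the K\"ahler cone. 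Your closing observation that the profile makes sense for \emph{real} $(p,q)$, so that irregular rays are handled directly rather than only through density of quasi-regular rays plus openness, is a legitimate strengthening of (and slight improvement on) the paper's argument; the paper relies solely on quasi-regular rays and openness.

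One assertion in your $m=0$ discussion is wrong, although it does not affect the existence statement actually being proved. For $J\in{\mathcal S}_{0,\grt}$ and a quasi-regular Reeb field with $p\neq q$ the fibre is the weighted projective line $\bbc\bbp(p,q)$, whose extremal (Bochner-flat) metrics \cite{Bry01,DaGa06} are \emph{not} of constant curvature, and the resulting local product extremal metrics are \emph{not} CSC: the paper shows they have constant scalar curvature if and only if $p=q$, consistent with Theorem \ref{mainthm}, which asserts that the CSC ray in the $m=0$ Sasaki cone is the unique regular ray. So your claim that ``the profiles give CSC representatives throughout'' in the degenerate case must be replaced by: the quotients carry local product extremal metrics (flat on $T^2$, Bryant extremal on $\bbc\bbp(p,q)$) in every K\"ahler class, CSC precisely when $p=q$. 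Relatedly, note that the paper deliberately leaves the integer $n$ of Lemma \ref{quotientlemma} undetermined even in the $m=0$ case, so you should not assume $n=0$ there; since your admissible construction covers all $n$, this costs nothing, but the statement as written is unjustified.
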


\begin{proof}
By the Openness Theorem of \cite{BGS06} it is enough to show that every quasi-regular ray in the Sasaki cone admits extremal Sasaki metrics and further by the homothety invariance of extremality we just need to show that one quasi-regular Sasaki structure in each quasi-regular ray admits an extremal Sasaki metric. In the next two subsections we will first determine the orbifold K\"ahler quotients of such Sasaki structures 

\subsection{The $m=0$ case}

When the complex structure is $J\in {\mathcal S}_{0,\grt}$ on $T^2\times_\grr \bbc\bbp^1 =\calz$, Suwa \cite{Suw69} shows that for $\grr\neq {\rm id}$, $H^0(\calz,\Theta_\calz)$ has dimension two. In a local coordinate chart $(w,\grz)$ on $U\times \bbc\bbp^1$ with $U=T^2-\{p_1,p_2\}$ where $p_1,p_2$ are distinct points of $T^2$, a basis for $H^0(\calz,\Theta_\calz)$ takes the form
$$\grz\d_\grz, \qquad \d_w+\bigl({\mathcal Z}(w-p_1)-{\mathcal Z}(w-p_2)\bigr) \grz\d_\grz,$$
where $\grz$ is an affine coordinate on $\bbc\bbp^1$ and ${\mathcal Z}$ is the Weierstrass zeta function. Neither the real nor imaginary part of the second vector field is Hamiltonian; however, the imaginary part of $\grz\d_\grz$ is a Hamiltonian Killing vector field and generates a Hamiltonian circle action. If we take the vector field $\cK=4{\rm Im}~\grz\d_\grz$ we see that the Hamiltonian is $H=\frac{1}{1+|\grz|^2}$. In homogeneous coordinates $(z_1,z_2)$ with $\grz=\frac{z_1}{z_2}$ on $\bbc\bbp^1$ we have $H=\frac{|z_2|^2}{|z_1|^2+|z_2|^2}$, so pulling back to $M^5_{k_1,k_2}$ gives $\pi^*H=|z_2|^2$. So by Lemma \ref{liftham} the Hamiltonian Killing vector field on $M^5_{k_1,k_2}$ is $X_0=\cK^h+|z_2|^2\12$ mod the ideal generated by $L$. A straightforward calculation shows that $X_0=H_2$ and this is also the Hamiltonian Killing vector field for $\grr={\rm id}$. Thus, from Lemma \ref{quotientlemma} the quotient orbifolds in the quasi-regular case have the form $(S_n,\grD_{pq})$ for some integer $n$.

\subsection{The $m>0$ case}

The discussion begins as in the degree zero case However, in this case the dimension of $H^0(\calz,\Theta_\calz)$ is $2m+1$ \cite{Suw69} and in local coordinates $(u,\grz)$ is spanned by $\grz\d_\grz,~u^{2m}\grz^2\d_\grz$ together with $2m$ holomorphic vector fields involving the Weierstrass $\wp$ function and its first $2(m-1)$ derivatives. As before $\cK=4{\rm Im}\grz\d_\grz$ generates the Hamiltonian circle action which in terms of homogeneous coordinates on the fibers is  $(w,[z_1,z_2])\mapsto (w,[z_1,\grl z_2])$. With respect to this complex structure the K\"ahler structure on $T^2\times S^2$ is taken to be that described explicitly by Equations (\ref{metric})-(\ref{positivity}) in Section \ref{exKmet}. Moreover, the complex structure lifts to a transverse complex structure and CR structure $(\cald_{k_1,k_2},J_{2m})$ on $M^5_{k_1,k_2}$, and it follows from Equation (\ref{RabmodL}) and the definition of $p$ and $q$ that the induced Hamiltonian Killing vector field on $M^5_{k_1,k_2}$ is again represented by $H_2$. Thus, the Sasaki cone is exactly the same as in the previous case. It is represented by the set of all elements in $\gt_2^+$ of the form $a\12+bH_2$  where $a,b$ satisfy conditions (\ref{prodcond}). As in Lemma \ref{quotientlemma} we set $p=a$ and $q=a+bk_2$. For $p$ and $q$ positive integers the Reeb vector field $a\12+bH_2=pH_1+qH_2 \mod \cali_L$  generates a locally free $S^1$ action, and by Lemma \ref{quotientlemma} its quotient is the orbifold 
$(S_{n},\grD_{pq})$ for some integer $n$ now determined by  $m$ as well as $p$, $q$, $k_1$ and $k_2$.

\subsection{Existence of Extremal K\"ahler metrics on $(S_{n},\grD_{pq})$}

To finish the proof of Theorem \ref{existence2DSasakicone} we will now show that for any
$n \in {\mathbb Z}$ and any pair of co-prime positive integers $p,q$, every K\"ahler class on 
$(S_{n},\grD_{pq})$ admits an extremal K\"ahler metric. Without loss of generality we may assume $n \geq 0$.

First let us consider the case $n=0$. From Section \ref{comsurf} we know that
$S_n$ may be described
as  $T^2\times_\grr \bbc\bbp^1$, where $\rho$ lies in a circle. Thus 
$(S_{n},\grD_{pq})$ may be viewed as the as a complex orbifold $T^2\times_\grr\bbc\bbp(p,q)$, where $\bbc\bbp(p,q)$ denotes the weighted projective space. 
As algebraic varieties $T^2\times_\grr\bbc\bbp^1$ and $T^2\times_\grr\bbc\bbp(w_1,w_2)$ are isomorphic for each homomorphism $\grr$; however, the latter has branch divisors along $E_0$ and $E_\infty$ with ramification index $p$ and $q$, respectively, making them inequivalent as orbifolds. It is often convenient to view such orbifolds as pairs; the former has trivial orbifold structure and is written as $(T^2\times_\grr\bbc\bbp^1,\emptyset)$, whereas the latter is written as $(T^2\times_\grr\bbc\bbp^1,\grD)$ with branch divisor
$$\grD=(1-\frac{1}{p})E_0 +(1-\frac{1}{q})E_\infty.$$
Then the identity map (as sets) 
$$\BOne:(T^2\times_\grr\bbc\bbp^1,\grD)\ra{2.3} (T^2\times_\grr\bbc\bbp^1,\emptyset)$$ 
is a Galois covering map with trivial Galois group, and the inequivalent orbifolds $(T^2\times_\grr\bbc\bbp^1,\emptyset)$ and $(T^2\times_\grr\bbc\bbp^1,\grD)$ have the same underlying complex structure. The employment of such Galois orbifold covers originated in \cite{GhKo05} and was subsequently exploited in \cite{Boy11,BoPa10}. We will exploit this point of view to treat the case $n>0$ as well.

The scalar curvature of the $T^2$ factor vanishes, and for weighted projective spaces it was computed  in \cite{BGS06}. Moreover, weighted projective spaces are known \cite{Bry01,DaGa06} to admit extremal K\"ahler metrics. Hence, $T^2\times_\grr\bbc\bbp(p,q)$ admits (local) product extremal K\"ahler metrics in every K\"ahler class which have constant scalar curvature 
if and only if $p=q$. 

Now let us take care of the case $n>0$. Here again we have the identity map
$$\BOne:(S_n,\grD)\ra{2.3} (S_n,\emptyset).$$
The extremal K\"ahler metrics for $(S_{n},\emptyset)$ were given in Section \ref{exKmet} where the ``canonical'' structure is determined by taking $\Theta(\gz)=1-\gz^2$. To describe the extremal metrics for $(S_{n},\grD_{pq})$ we adobt the discussion in Section 1.3 of \cite{ACGT08} to the orbifold setting as in Section 1.5 of \cite{ACGT04}. The ``canonical'' structure (giving the Abreu-Guillemin metric on the fibers, cf. \cite{Abr01}) is determined by the function 
\begin{equation}\label{thetac}
\Theta_c (\gz) = \frac{2pq(1+\gz)(1-\gz)}{p^2q(1-\gz) + q^2p(1+\gz)},
\end{equation}
where we assume that $p$ and $q$ are relatively prime positive integers.
Now the conditions \eqref{positivity} on $\Theta$ are replaced by
\begin{align}
\label{conesingpositivity}
(i)&~ \Theta({\gz}) > 0, \quad -1 < \gz <1,\quad
(ii)\ \Theta(\pm 1) = 0,\quad \\ \notag
(iii)& ~\Theta'(-1) =  2/p, \quad \Theta'(1) = -2/q.
\end{align}
It is easy to check that $\Theta_c$ of Equation \ref{thetac} satisfies these conditions. However, the metric obtained from $\Theta_c$ is not extremal. We obtain extremal K\"ahler metrics from Proposition 1 in \cite{ACGT08} by defining the function
$F(\gz) = \Theta(\gz) (1+ r \gz)$.  We see that $g$ is extremal exactly when
$F(\gz)$ is a polynomial of degree at most 4 and $F''(-1/r) = 0$. Together with the endpoint conditions of \eqref{conesingpositivity} this implies that $F(\gz)$ must be given by
\begin{equation}\label{orbiextremalpolynomial}
F(\gz) = \frac{(1-\gz^2)h(\gz)}{4 p q (3 - r^2)},
\end{equation}
where 
\[
\begin{array}{ccl}
h(\gz) & = & q (6 - 3 r - 4 r^2 + r^3) + p (6 +  3 r - 4 r^2 - r^3)\\
\\
 &+& 2 (3 - r^2) (q (r-1) + p (1 + r))\gz \\
 \\
 &+ &  r(p (3 + 2 r - r^2) - q (3 - 2 r - r^2))\gz^2,
 \end{array}
    \]
and $-1<\gz<1$. We can then check that $\Theta(\gz)$ as defined via $F(\gz)$ above satisfies all the conditions of \eqref{conesingpositivity}. Thus for any pair $(p,q)$ of positive integers and for all $r \in (0,1)$ we have an extremal K\"ahler metric. Since up to rescaling the set $\{r \in (0,1)\}$ still exhausts the K\"ahler cone, we are done.
\end{proof}

\begin{remark} Notice that the parameter $r$ is also determined via the Sasakian quotient by $m$,  $p$, $q$, $k_1$ and $k_2$. Experimental data indicate that for some choices of these data, we  arrive at an $r$ such that $h(\gz)$ is a linear function and hence, according to \cite{ACGT08}, the corresponding K\"ahler and Sasakian extremal K\"ahler metrics have constant scalar curvature. We treat this issue carefully in \cite{BoTo12}.
Note that here $F'(\gz)$ does not have a double root at $\gz=-1/r$, which is the criterion for a K\"ahler-Einstein metric in this set-up (see e.g. Section 3 of \cite{ACGT08b}). There are no K\"ahler-Einstein metrics on $T^2\times S^2$ nor regular Sasaki-Einstein metrics on $T^2\times S^3$.
\end{remark}

\begin{ack}
The authors would like to thank Vestislav Apostolov, David Calderbank, and Paul Gauduchon for helpful conversations, and an anonymous referee for making suggestions that enhance the clarity of the exposition.
We would also like to thank the Simons Center at Stony Brook for financial support during a short visit there in March 2011 where work on this paper began.
\end{ack}

\def\cprime{$'$} \def\cprime{$'$} \def\cprime{$'$} \def\cprime{$'$}
  \def\cprime{$'$} \def\cprime{$'$} \def\cprime{$'$} \def\cprime{$'$}
  \def\cdprime{$''$} \def\cprime{$'$} \def\cprime{$'$} \def\cprime{$'$}
  \def\cprime{$'$}
\providecommand{\bysame}{\leavevmode\hbox to3em{\hrulefill}\thinspace}
\providecommand{\MR}{\relax\ifhmode\unskip\space\fi MR }
\providecommand{\MRhref}[2]{%
  \href{http://www.ams.org/mathscinet-getitem?mr=#1}{#2}
}
\providecommand{\href}[2]{#2}

\end{document}